\documentclass{amsart}
\usepackage{hyperref}
\usepackage{graphicx}
\usepackage{amsmath}
\usepackage{amssymb}
\usepackage{amsthm}
\usepackage{dsfont}
\usepackage{mathrsfs}
\usepackage{sansmath}
\usepackage{xcolor}
\usepackage{comment}

\numberwithin{equation}{section}
\newtheorem{theorem}{Theorem}[section]
\newtheorem{lemma}[theorem]{Lemma}
\newtheorem{proposition}[theorem]{Proposition}
\newtheorem{corollary}[theorem]{Corollary}
\newtheorem{definition}[theorem]{Definition}
\newtheorem{assumption}[theorem]{Assumption}

\newtheorem{remark}[theorem]{Remark}
\newtheorem{example}[theorem]{Example}

\DeclareMathOperator*{\tr}{\mathrm{tr}}

\DeclareMathOperator*{\argmin}{arg\,min}

\DeclareMathOperator{\Var}{Var}
\DeclareMathOperator{\rank}{rank}

\newcommand{\indep}{\perp \!\!\! \perp}
\newcommand{\Cpl}{\mathrm{Cpl}}

\newcommand{\R}{\mathbb{R}}
\newcommand{\dist}{\mathsf{dist}}
\newcommand{\AW}{\mathcal{AW}}
\newcommand{\BW}{\mathrm{BW}}
\newcommand{\FP}{\mathrm{FP}}
\newcommand{\cFG}{\mathcal{FG}}

\newcommand{\cFP}{\mathcal{FP}}
\newcommand{\cB}{\mathcal{B}}
\newcommand{\cC}{\mathcal{C}}
\newcommand{\cD}{\mathcal{D}}
\newcommand{\cF}{\mathcal{F}}

\newcommand{\cW}{\mathcal{W}}
\newcommand{\FG}{\mathrm{FG}}

\newcommand{\Cov}{\textup{Cov}}
\newcommand{\cN}{\mathcal{N}}

\newcommand{\cI}{\mathcal{I}}
\newcommand{\cL}{\mathcal{L}}
\newcommand{\cP}{\mathcal{P}}
\newcommand{\cT}{\mathcal{T}}
\newcommand{\cX}{\mathcal{X}}

\newcommand{\dd}{\mathrm{d}}
\newcommand{\AB}{\mathrm{AB}}

\newcommand{\bE}{\mathbb{E}}
\newcommand{\E}{\mathbb{E}}
\newcommand{\bF}{\mathbb{F}}
\newcommand{\bG}{\mathbb{G}}

\newcommand{\diag}{\textup{diag}}
\newcommand{\bP}{\mathbb{P}}
\newcommand{\bR}{\mathbb{R}}
\newcommand{\bX}{\mathbb{X}}
\newcommand{\bY}{\mathbb{Y}}

\newcommand{\norm}[1]{|#1 |}

\newcommand{\normF}[1]{\|#1\|_{\mathrm{F}}}

\newcommand{\normnuc}[1]{\|#1\|_*}

\newcommand{\mgle}{\mathrm{mgle}}
\newcommand{\Markov}{\mathrm{Markov}}
\begin{document}

\title{Adapted optimal transport between filtered Gaussian processes}

\author{Madhu Gunasingam}
\address{Department of Statistical Sciences, University of Toronto}
\email{madhu.gunasingam@mail.utoronto.ca}

\author{Ting-Kam Leonard Wong}
\address{Department of Statistical Sciences, University of Toronto}
\email{tkl.wong@utoronto.ca}


\keywords{Adapted Wasserstein distance, filtered process, Gaussian process, Procrustes problem, adapted Brenier coupling, Gelbrich's bound}
\date{}


\begin{abstract}
We continue the study of adapted optimal transport in the discrete-time Gaussian setting. To this end, we introduce a space of filtered Gaussian processes where both the randomness and the flow of information are driven by a Gaussian white noise. On this space, the adapted $2$-Wasserstein distance ($\AW_2$) admits a variational representation as a constrained orthogonal Procrustes problem between Cholesky factors. Furthermore, the resulting quotient space is the $\AW_2$-completion of the space of Gaussian distributions on the path space. We also characterize explicitly the $\AW_2$-projections onto the subspaces of Gaussian martingales. Next, we analyze the adapted Brenier coupling---a multivariate generalization of the Knothe--Rosenblatt coupling that serves as a myopic solution to the adapted transport problem, and compute its transport cost. Utilizing a Gaussian random matrix framework, we investigate the asymptotic behavior of transport costs as the time horizon grows; notably, we establish that the transport costs of all Gaussian bicausal couplings are asymptotically equivalent, whereas the classical Bures--Wasserstein distance is strictly smaller. Finally, we demonstrate that the adapted analogue of Gelbrich's lower bound fails in general, and we identify a sufficient martingale difference condition under which the bound is recovered.
\end{abstract}

\maketitle

\section{Introduction}
{\it Adapted optimal transport} tailors optimal transport \cite{V03, V08} to the setting of stochastic analysis, by incorporating the direction of time and hence the flow of information. It provides not only a suitable framework for quantifying model uncertainty in mathematical finance and stochastic dynamic programming, but also natural loss functions for modelling sequential data. We refer the reader to \cite{lassalle2018causal} for the general theory of adapted (or (bi)causal) optimal transport, and \cite{BBYZ2017, BBP2024} for its specialization in discrete-time that is the focus of this paper. Necessary concepts, including the adapted $2$-Wasserstein distance $\AW_2$ between filtered processes, will be recalled in Section \ref{sec:AOT}.

In this paper, we continue the study of adapted optimal transport in the Gaussian discrete-time setting, where explicit computations---seldom available in other cases---are possible. Given the importance of Gaussian processes in probability and data science, a solid understanding of the Gaussian setting will likely stimulate further applications of adapted optimal transport. This line of research was initiated by the recent works \cite{GW24, AHP2024} whose authors computed explicitly the adapted $2$-Wasserstein distance between non-degenerate Gaussian distributions on the path space. Here, we extend these results to the setting of filtered processes which allow degeneracy and different choices of the filtration. See \cite{jiang2025transfer} for another extension to continuous-time Gaussian processes.

Let $N, d \geq 1$ be given integers, where $N$ is the number of time steps and $d$ is the spatial dimension. The basic objects of our study are {\it filtered Gaussian processes} (Definition \ref{def:filtered.Gaussian.process}), which are filtered processes of the form
\begin{equation} \label{eqn:filtered.Gaussian.intro}
\bG^{a,L} := (\bR^{Nd}, \mathcal{B}(\bR^{Nd}), \cN_{Nd}(0, I), \bF = (\cF_t)_{t=1}^N, X = a + L\epsilon),
\end{equation}
where $(\bR^{Nd}, \mathcal{B}(\bR^{Nd}), \cN_{Nd}(0, I))$ is the path space equipped with the standard Gaussian distribution, $\epsilon = (\epsilon_t)_{t = 1}^N$ is the canonical process which is a Gaussian white noise, and $\bF$ is the filtration induced by $\epsilon$. On this filtered probability space, we consider a Gaussian process $X = (X_t)_{t = 1}^N$ given as a column vector by
\[
X = a + L\epsilon,
\]
where $a = (a_t)_{t = 1}^N \in \bR^{Nd} \cong (\bR^d)^N$ is the mean and $L = (L_{s,t})_{s, t = 1}^N \in \bR^{Nd \times Nd} \cong (\bR^{d \times d})^{N \times N}$, the {\it Cholesky factor}, has lower triangular $d \times d$ blocks (written $L \in \mathscr{L}(N, d)$).\footnote{In Table \ref{tab:notation} we gather the various spaces of matrices, and their symbols, used in the paper.} The lower triangular structure guarantees that $X$ is adapted to $\bF$, but $\bF$ may be strictly larger than the natural filtration $\bF^X$ induced by $X$. The law of $X$ is Gaussian with mean $a$ and covariance matrix $A = LL^{\intercal}$, denoted by $\cN_{Nd}(a, A)$. Note that a given covariance matrix may be realized by multiple Cholesky factors representing different information structures. Thus, a filtered Gaussian process is a richer object than a Gaussian distribution. The set of all filtered Gaussian processes is denoted by $\cFG$. 

Throughout, we let $\bX = \bG^{a, L}, \bY = \bG^{b, M} \in \cFG$ be filtered Gaussian processes. The corresponding stochastic processes are denoted by $X = a + L\epsilon^X$ and $Y = b + M\epsilon^Y$. Recentering if necessary, we may assume $a = b = 0$. We let $A = LL^{\intercal}$ and $B = MM^{\intercal}$ be the corresponding covariance matrices.

\subsection{Summary of contributions}
We adopt a unified approach to study and compare classical and adapted transports between $\bX$ and $\bY$, with a focus on the following three settings:
\begin{enumerate}
\item[(i)] Classical optimal transport (Section \ref{sec:Wasserstein.Gaussian}) which neglects the filtrations and leads to the $2$-Wasserstein distance $\cW_2$ between distributions on the path space $\bR^{Nd}$. Since the processes are Gaussian, $\cW_2$ reduces to the {\it Bures--Wasserstein distance}, for which a new expression is given in \eqref{eqn:Wasserstein.Cholesky.intro} below.
\item[(ii)] Adapted optimal transport (Section \ref{sec:filtered.Gaussian}) which results in the adapted $2$-Wasserstein distance $\AW_2$. When equipped with $\AW_2$, the set $\cFG$ of filtered processes becomes a pseudo-metric space with rich properties.
\item[(iii)] The {\it adapted Brenier coupling} (Section \ref{sec:adapted.Brenier}) which solves a simplified myopic (step-by-step) version of the adapted optimal transport problem. The adapted Brenier coupling, which can be defined beyond the Gaussian setting, may be regarded as a generalization of the {\it Knothe--Rosenblatt coupling}, and is handy when the adapted Wasserstein distance is difficult to compute explicitly.
\end{enumerate}
In all cases, we provide explicit formulas and characterize the set of optimal Gaussian couplings.\footnote{Non-Gaussian optimal couplings can still be characterized but they are more cumbersome to state. For our purposes Gaussian couplings are sufficient.} Essentially, these transports boil down to different ways of coupling the Gaussian driving noises $\epsilon^X$ and $\epsilon^Y$.

\medskip

To illustrate our approach, we revisit classical optimal transport between Gaussian distributions in Section \ref{sec:Wasserstein.Gaussian}. Even in this setting, our approach leads to an expression of the Bures--Wasserstein distance that, to the best of our knowledge, is new in the literature. Specifically, we show in Proposition \ref{prop:Wasserstein.Cholesky} that
\begin{equation} \label{eqn:Wasserstein.Cholesky.intro}
\cW_2^2( \cN_{Nd}(0, A), \cN_{Nd}(0, B)) = \|L\|_{\mathrm{F}}^2 + \|M\|_{\mathrm{F}}^2 - 2\|L^{\intercal}M\|_*,
\end{equation}
where $\|\cdot\|_{\mathrm{F}}$ and $\|\cdot\|_*$ denote respectively the Frobenius and nuclear norms. 

In Section \ref{sec:filtered.Gaussian} we study various aspects of the space $\cFG$ of filtered processes equipped with the (pseudo-) metric $\AW_2$. Section \ref{sec:sub.filtered.Gaussian} shows that the adapted $2$-Wasserstein distance is given by
\begin{equation} \label{eqn:AW2.intro}
\AW_2^2(\bX, \bY) = \|L\|_{\mathrm{F}}^2 + \|M\|_{\mathrm{F}}^2 - 2\sum_{t = 1}^N \| (L^{\intercal}M)_{t,t} \|_*,
\end{equation}
and has a variational representation in the form of a Procrustes problem, and that $(\cFG, \AW_2)$ is (after taking a suitable quotient) a complete metric space. Independently, these results were also obtained by the authors of \cite{ABGHP26}, who moved on to study further geometric properties of the tangent space and curvature. Here, we focus on other aspects. In Section \ref{sec:minimal.Cholesky}, we show that by choosing a suitably defined {\it minimal Cholesky factor} for each covariance matrix, we obtain the adapted $2$-Wasserstein distance between Gaussian {\it distributions} that are possibly degenerate (Theorem \ref{thm:AW2.Gaussian.distributions}). Furthermore, we show that $\cFG$ (after taking the quotient) can be identified with the $\AW_2$-completion of the set of Gaussian distributions on $\bR^{Nd}$. This gives a theoretical justification of our definition of $\cFG$.

In Section \ref{sec:mgle.Markov}, we study the subspaces of martingales and Markov processes in $\cFG$. We also consider a projection problem with respect to $\AW_2$ (Proposition \ref{prop:right.invariance}), and show that the martingale projection has an explicit solution (Corollary \ref{cor:mgle.proj}).

Section \ref{sec:adapted.Brenier} studies the adapted Brenier coupling. In Theorem \ref{thm:adapted.Brenier}, we show that its transport cost is given by
\begin{equation} \label{eqn:Brenier.cost}
\|L\|_{\mathrm{F}}^2 + \|M\|_{\mathrm{F}}^2 - 2 \sum_{t = 1}^N \tr ( (L^{\intercal}M)_{t,t} P_t),
\end{equation}
where each $P_t$ is a certain correlation matrix that maximizes the trace $\tr(L_{t,t}^{\intercal} M_{t,t}P_t)$. It is interesting to note that this does not define a squared distance. 

In Section \ref{sec:random.matrix}, we compare these transport costs in a probabilistic framework where $L$ and $M$ are random matrices with i.i.d.~Gaussian entries. Considering the regime where $d$ is fixed and $N \rightarrow \infty$, we show that the transport costs of all Gaussian bicausal couplings grow asymptotically at the same rate (Theorem \ref{thm:ensemble.equiv}), while the (squared) Bures--Wasserstein distance is, as expected, strictly smaller (Theorem \ref{thm:strict-gap}). 

The above results are consistent with the general phenomenon that classical and adapted optimal transports can have very different behaviours. In Section \ref{sec:Gelbrich} we provide yet another instance of this phenomenon. Namely, we show that {\it Gelbrich's lower bound} \cite{G1990} of the $2$-Wasserstein distance does not extend to the adapted setting. That is to say, for $\mu, \nu \in \cP_2(\bR^{Nd})$ with mean zero and covariance matrices $A$ and $B$, it is generally {\it not} true that
\begin{equation} \label{eqn:adapted.Gelbrich.intro}
\AW_2(\mu, \nu) \geq \AW_2( \cN_{Nd}(0, A), \cN_{Nd}(0, B)).
\end{equation}
(Gelbrich's classical bound states that \eqref{eqn:adapted.Gelbrich.intro} holds if we replace $\AW_2$ by $\cW_2$ on both sides.) In Theorem \ref{thm:adapted.Gelbrich}, we identify a {\it martingale difference condition} which is restrictive but guarantees that \eqref{eqn:adapted.Gelbrich.intro} holds.

\begin{table}[h!]
\begin{center}
\begin{tabular}{c|l}
Symbol & Meaning  \\ 
\hline
$\mathscr{L}(n)$ & Lower triangular matrices \\
$\mathscr{L}_+(n)$, $\mathscr{L}_{++}(n)$ & Elements of $\mathscr{L}(n)$ with non-negative  \\
&(resp.~positive) diagonal entries  \\
$\mathscr{L}(N, d)$ & Block lower triangular matrices  \\ 
$\mathscr{L}_\mgle(N, d)$ & Elements of $\mathscr{L}(N,d)$ \\
& corresponding to martingales\\
$\mathscr{L}_{\mathrm{Markov}}(N, d)$ & Elements of $\mathscr{L}(N,d)$ \\
& corresponding to Markov processes\\
$\tilde{\mathscr{L}}(N, d)$, $\tilde{\mathscr{L}}_{++}(N, d)$ & Spaces of $\dist_{\AW}$-equivalence classes \\
$\mathscr{S}_+(n)$, $\mathscr{S}_{++}(n)$ & Positive semidefinite (resp.~positive \\
& definite) matrices \\
$\mathscr{C}(n)$ & Correlation matrices \\
$\mathscr{O}(n)$ & Orthogonal matrices  \\
$\mathscr{O}(N, d)$ & Block diagonal matrices with  \\
 & orthogonal diagonal blocks  \\
$\mathscr{P}(C)$ & Optimal correlation matrices for $C$ in\\
& Proposition \ref{prop:trace} \\
$\mathscr{Q}(L, M)$ & Optimal orthogonal matrices for $L$, $M$\\
& in Theorem \ref{thm:Procrustes}
\end{tabular}
\end{center}
\caption{Spaces of matrices used in the paper. We denote by $n$ a generic dimension, $N$ the temporal dimension (number of time steps), and $d$ the spatial dimension.} \label{tab:notation}
\end{table}

\section{Preliminaries} \label{sec:prelim}
\subsection{Adapted optimal transport} \label{sec:AOT}
We begin by recalling some fundamental definitions in the theory of adapted optimal transport \cite{BBP2024}.  In the following, we consider stochastic processes with values in $\bR^d$ and indexed by $t \in [N] := \{1, \ldots, N\}$. A vector $x$ in the path space $\R^{Nd} \cong (\R^d)^N$ decomposes as $x=(x_{{1}},\dots,x_{{N}})$, where $x_t \in\R^d$ represents the $t$-th temporal block. In calculations, we regard $x$ and $x_t$ (and other vectors) as column vectors unless otherwise stated. The Euclidean norm of $x$ is denoted by $\|x\|_2 = (x^{\intercal}x)^{1/2}$, where $^\intercal$ denotes the matrix transpose. We use $0$ and $I$ to denote respectively the zero vector (or matrix) and the identity matrix of suitable dimensions.  Given a matrix $A\in\R^{Nd\times Nd}$ and $s, t \in [N]$, we use $A_{s,t} \in \R^{d \times d}$ to denote the $(s, t)$-th temporal block. When we need to refer to the $(i, j)$-th scalar entry of a matrix $C$ in the usual sense, we use the notation $C_{[i, j]}$. Similarly, $x_{[i]}$ is the $i$-th component of a vector $x$ (here $[i]$ should not be confused with $\{1, \ldots, i\}$). The law (distribution) of a random element $X$ is denoted by $\cL(X)$. Given a metric space $\cX$, we let $\cP(\cX)$ be the set of Borel probability measures on $\cX$, and $\cP_2(\cX)$ be the subset of those that have finite second moment. 


\begin{definition}[Filtered process]
A filtered process is a five-tuple 
\begin{equation} \label{eqn:filtered.process}
\bX = \left(\Omega^{\bX}, \cF^{\bX}, \bP^{\bX}, \bF^{\bX} = (\cF_t^{\bX})_{t = 1}^N, X = (X_t)_{t = 1}^N\right),
\end{equation}
where $(\Omega, \cF, \bP, \bF)$ is a filtered probability space and $X$ is an $\R^d$-valued stochastic process on $(\Omega, \cF, \bP)$ adapted to $\bF$. We let $\cFP_2$ be the family of all filtered processes $\bX$ with $\cL(X) \in \cP_2(\bR^{Nd})$. 
\end{definition}


A (Borel) probability measure $\mu$ on the path space $\bR^{Nd}$ has a canonical representation as a filtered process.

\begin{definition}[Canonical representation of a distribution as a filtered process] \label{def:embedding}
For $\mu \in \cP_2(\bR^{Nd})$, we define 
\begin{equation} \label{eqn:law.as.filtered.process}
\bX^{\mu} := (\bR^{Nd}, \cB(\bR^{Nd}), \mu, (\cF_t)_{t = 1}^N, (X_t)_{t = 1}^N),
\end{equation}
where $X_t(\omega) = \omega_t$ is the canonical process on $(\bR^{Nd}, \cB(\bR^{Nd}))$ and has law $\cL(X) = \mu$, and $\cF_t = \sigma(X_s: s \leq t)$ is the natural filtration induced by $X$. We call $\bX^{\mu} \in \cFP_2$ the canonical representation of $\mu$ as a filtered process.
\end{definition}

Next, we describe how to couple two filtered processes. To take into account the underlying filtrations, we will require the coupling to be {\it bicausal}. Intuitively, this means that the coupling cannot look into the future. Given filtered processes $\bX$, $\bY$ and $0 \leq s, t \leq N$, we let $\cF_{t,s}^{\bX, \bY}$ be the product $\sigma$-algebra on $\Omega^{\bX} \times \Omega^{\bY}$ defined by $\cF_{t,s}^{\bX, \bY} := \cF_t^{\bX} \otimes \cF_s^{\bY}$, where by convention $\cF_0^{\bX} = \{\emptyset, \Omega^{\bX}\}$ and $\cF_0^{\bY} = \{\emptyset, \Omega^{\bY}\}$.

\begin{definition}[Bicausal coupling] \label{def:bicausal.coupling}
Let $\bX$ and $\bY$ be filtered processes. A coupling between $\bX$ and $\bY$ is a probability measure $\pi$ on $(\Omega^{\bX} \times \Omega^{\bY}, \cF^{\bX} \otimes \cF^{\bY})$ whose marginals are $\bP^{\bX}$ and $\bP^{\bY}$. We say that $\pi$ is bicausal if for every $1 \leq t \leq N$ we have, under $\pi$,
\begin{equation} \label{eqn:bicausal}
\begin{split}
&\cF_{N, 0}^{\bX, \bY} \indep \cF_{0,t}^{\bX, \bY} \text{ given } \cF_{t,0}^{\bX, \bY}, \text{ and } \cF_{0, N}^{\bX, \bY} \indep \cF_{t,0}^{\bX, \bY} \text{ given } \cF_{0,t}^{\bX, \bY}.
\end{split}
\end{equation}
We let $\Cpl_{\mathrm{bc}}(\bX, \bY)$ be the set of bicausal couplings between $\bX$ and $\bY$. When $\bX = \bX^{\mu}$ and $\bY = \bX^{\nu}$ are canonical representations of $\mu, \nu \in \cP(\bR^{Nd})$, we write $\Cpl_{\mathrm{bc}}(\mu, \nu) := \Cpl_{\mathrm{bc}}(\bX^{\mu}, \bX^{\nu})$.
\end{definition}

A bicausal coupling is a coupling (in the ordinary sense) of the underlying probability measures $\bP^{\bX}$ and $\bP^{\bY}$; this induces, via the process $(X, Y)$ on the product space $\Omega^{\bX} \times \Omega^{\bY}$, a coupling between the laws $\cL(X)$ and $\cL(Y)$. The set of bicausal couplings is always nonempty as it contains the product coupling $\bP^{\bX} \otimes \bP^{\bY}$. The adapted $2$-Wasserstein distance is defined in terms of the value of the following bicausal optimal transport problem.

\begin{definition}[Adapted $2$-Wasserstein distance] \label{def:AW2}
The adapted $2$-Wasserstein distance $\AW_2(\bX, \bY)$ between filtered processes $\bX, \bY \in \cFP_2$ is defined by
\begin{equation} \label{eqn:AW2}
\AW_2(\bX, \bY) := \inf_{\pi \in \Cpl_{\mathrm{bc}}(\bX, \bY)} \bE_{\pi}\left[ \|X - Y\|_2^2 \right]^{\frac{1}{2}}.
\end{equation}
The adapted $2$-Wasserstein distance between laws $\mu, \nu\in \cP_2(\bR^{Nd})$ is defined by
\begin{equation} \label{eqn:AW2.laws}
\AW_2(\mu, \nu) := \AW_2(\bX^{\mu}, \bX^{\nu}),
\end{equation}
where $\bX^{\mu}, \bX^{\nu} \in \cFP_2$ are the canonical representations defined by \eqref{eqn:law.as.filtered.process}.
\end{definition}


\begin{definition}[Equivalence classes of filtered processes]
We define $\FP_2 := \cFP_2/\sim$, where $\bX \sim \bY$ if and only if $\AW_2(\bX, \bY) = 0$. We often use the same symbol $\bX$ to denote a filtered process in $\cFP_2$ and its equivalence class $[\bX]$ which is an element of $\FP_2$.
\end{definition}

A motivation for considering filtered processes, rather than simply distributions on the path space, is that $(\cP_2(\bR^{Nd}), \AW_2)$ is not a complete metric space \cite{BBEP20}. In \cite[Theorems 1.2--1.3]{BBP2024}, it was shown that the completion of $(\cP_2(\bR^{Nd}), \AW_2)$ can be identified with $(\FP_2, \AW_2)$. Moreover, $(\FP_2, \AW_2)$ is a Polish space that is isometric to the usual $2$-Wasserstein space on a space of nested (conditional) distributions. Nevertheless, the canonical representation, which amounts to using the filtration induced by the process, is a natural and reasonable choice in many applications. For example, in mathematical finance and stochastic optimal control, it makes sense to use the filtration induced by the observable state process, rather than that induced by
the driving noise. 

We end this subsection by noting that although adapted optimal transport problems are difficult to solve in general, an abstract Brenier theorem was recently developed in \cite{BPS25} (also see \cite{PS25}). Also, the bicausal optimal transport problem \eqref{eqn:AW2} can be extended to several filtered processes. The resulting {\it multicausal} optimal transport problem was introduced recently in \cite{acciaio2025multicausal}.

\subsection{Bures--Wasserstein distance via Cholesky factors} \label{sec:Wasserstein.Gaussian} 
In this subsection we review the standard $2$-Wasserstein transport between Gaussian distributions, with a new twist that can be extended to the adapted setting. We let $\mathscr{S}_+(n)$ (resp.~$\mathscr{S}_{++}(n)$) be the set of $n \times n$ positive semidefinite (resp.~strictly positive definite) matrices. For $(a, A) \in \bR^n \times \mathscr{S}_+(n)$, we let $\mathcal{N}_n(a, A) \in \cP_2(\bR^n)$ be the Gaussian distribution on $\bR^n$ with mean $a$ and covariance matrix $A$. We let
\[
\mathscr{N}(n) := \{\cN_n(a, A): a \in \bR^{n}, A \in \mathscr{S}_+(n)\}
\]
be the set of Gaussian distributions on $\bR^n$. In the adapted setting, we write $\mathscr{N}(N, d) \cong \mathscr{N}(Nd)$ to make the time and space dimensions explicit.

The {\it $2$-Wasserstein distance} between $\mu, \nu \in \cP_2(\bR^{n})$ is defined by
\begin{equation} \label{eqn:2.Wasserstein}
\cW_2(\mu, \nu) := \inf_{\pi \in \Cpl(\mu, \nu)} \bE_{\pi}\left[ \|X - Y\|_2^2 \right]^{\frac{1}{2}},
\end{equation}
where, as before, $\bE_{\pi}$ is the expectation under which $(X, Y) \sim \pi$. We say that $\pi \in \Cpl(\mu, \nu)$ is a {\it Brenier coupling} of $(\mu, \nu)$ if it is optimal for \eqref{eqn:2.Wasserstein}. It is well known that if $\mu = \cN_n(a, A), \nu = \cN_n(b, B) \in \mathscr{N}(n)$, then
\begin{equation} \label{eqn:W2}
\cW_2^2(\mu, \nu) = \|a - b\|_2^2 + \dist_{\BW}^2(A, B),
\end{equation}
where $\dist_{\BW}$ is the {\it Bures--Wasserstein distance} on $\mathscr{S}_+(n)$ defined by
\begin{equation} \label{eqn:Bures.Wasserstein}
\dist_{\BW}^2(A, B) := \tr A + \tr B - 2 \tr \left( (A^{\frac{1}{2}}BA^{\frac{1}{2}})^{\frac{1}{2}}  \right).
\end{equation}
Here, $\tr$ denotes the trace and $A^{1/2}$ is the square root of $A$ in $\mathscr{S}_+(n)$. We refer the reader to \cite{BJL19, WMP18, T11} for various analytic and geometric properties of the Bures--Wasserstein distance.

\begin{remark}[Centering] \label{rmk:centering}
In classical and adapted $2$-Wasserstein transport, the difference in means is accounted simply by a translation. For example, we have
\[
\cW_2^2( \cN_n(a, A), \cN_n(b, B)) = \|a - b\|_2^2 + \cW_2^2(\cN_n(0, A), \cN_n(0, B)).
\]
Although some results will be stated for general means and covariance matrices, in  most proofs we will assume $a = b = 0$ to simplify the notation.
\end{remark}

As demonstrated in \cite{GW24, AHP2024}, when dealing with Gaussian distributions it is the {\it Cholesky factor}, rather than the covariance matrix itself, that is essential in adapted optimal transport. To compare the classical and adapted settings, we will express the Bures--Wasserstein distance in terms of the Cholesky factors. 

Before doing so, we recall some concepts and results in linear algebra that will be used throughout the paper. Let $\mathscr{L}(n)$ be the set of $n \times n$ lower triangular matrices, and $\mathscr{L}_+(n)$ (resp.~$\mathscr{L}_{++}(n)$) be the subset of matrices in $\mathscr{L}(n)$ whose diagonal entries are nonnegative (resp.~positive). For any $A \in \mathscr{S}_+(n)$, there exists $L \in \mathscr{L}_+(n)$, called a {\it Cholesky factor} of $A$, such that $LL^{\intercal} = A$. When $A \in \mathscr{S}_{++}(n)$, the Cholesky factor is unique and is an element of $\mathscr{L}_{++}(n)$.

Let a matrix $C \in \bR^{m \times n}$ be given.
\begin{itemize}
    \item The {\it Frobenius} norm is defined by $\|C\|_{\mathrm{F}} := \sqrt{\tr (CC^{\intercal})}$.
    \item The {\it nuclear norm} is defined by $\| C \|_{*} := \tr \big( ( C^{\intercal} C)^{1/2} \big)$.
    \item The {\it spectral norm} is defined by $\|C\|_{2 \rightarrow 2} := \sup_{u \in \bR^n: \|u\|_2 \leq 1} \|Cu\|_2$.
\end{itemize} 
These norms can be expressed in terms of the {\it singular value decomposition} $C = U \Sigma V^{\intercal}$, where $U \in \bR^{m \times m}$ and $V \in \bR^{n \times n}$ are orthogonal (written $U \in \mathscr{O}(m), V \in \mathscr{O}(n)$), and $\Sigma \in \bR^{m\times n}$ is a diagonal matrix whose diagonal entries are the singular values of $C$. We let $\sigma_1(C) \geq \cdots \geq \sigma_k(C) \geq 0$, where $k = \min\{m, n\}$, be the {\it singular values} of $C$ arranged in descending order. Multiplying $U$ and $V$ by permutation matrices (which are orthogonal) if necessary, we may, and will, assume without loss of generality that $\Sigma_{[i, i]} = \sigma_i(C)$. 

Since the trace of
\[
C^{\intercal} C = (U\Sigma V^{\intercal})^{\intercal} (U\Sigma V^{\intercal}) = V \Sigma^2 V^{\intercal}
\]
is equal to that of $\Sigma^2$, we have
\[
\|C\|_{\mathrm{F}} = \left(\sigma_1^2(C) + \cdots + \sigma_k^2(C)\right)^{\frac{1}{2}}.
\]
On the other hand, since $(C^{\intercal} C)^{1/2} = V (\Sigma^\intercal\Sigma)^{1/2} V^{\intercal}$, 
\begin{equation} \label{eqn:nuclear.norm.trace}
\|C\|_{*} = \sigma_{1}(C) + \cdots + \sigma_{k}(C)
\end{equation}
is the sum of the singular values. Finally, the spectral norm is given by (see \cite[Example 5.6.6]{HornJohnson2013}) the largest singular value:
\[
\|C\|_{2 \rightarrow 2} = \sigma_1(C).
\]
It follows that
\begin{equation} \label{eqn:matrix.norm.inequality}
\|C\|_{2\rightarrow 2} \leq \|C\|_{\mathrm{F}} \leq \|C\|_{*} \leq \sqrt{k} \|C\|_{\mathrm{F}}.
\end{equation}
Note that $\|C\| = \|C^{\intercal}\|$ for each of these norms. We also note that for $C \in \bR^{n \times n}$, $\|C\|_{2 \rightarrow 2} \leq 1$ if and only if
\begin{equation} \label{eqn:correlation.matrix}
\begin{bmatrix}
I_n & C \\ C^{\intercal} & I_n
\end{bmatrix} \in \mathscr{S}_+(2n).
\end{equation}
In this case, we call $C$ a {\it correlation matrix}. We let
\[
\mathscr{C}(n) := \{C \in \bR^{n \times n} : \|C\|_{2 \rightarrow 2} \leq 1 \}
\]
be the set of all $n \times n$ correlation matrices.

We also recall {\it von Neumann's trace inequality} (see, for example, \cite[Theorem 7.4.1.1]{HornJohnson2013}). For real matrices, it states that if $C, D \in \mathbb{R}^{n \times n}$, then
\begin{equation} \label{eqn:trace.inequality}
|\tr(CD)| \leq \sum_{i = 1}^n \sigma_i(C) \sigma_i(D).
\end{equation}

The following matrix-analytic result provides a variational interpretation of the nuclear norm. Namely, it generalizes the elementary fact that $|c| = \max_{\rho \in [-1, 1]} c\rho$, $c \in \bR$. The trace in \eqref{eqn:trace.maximization} arises naturally when computing expectations of the form $\bE[\|X - Y\|_2^2]$. Here, we cover the degenerate case and characterize the set of all optimizers. Since this result is seldom stated in its most general form, for completeness we provide a proof in the appendix.

\begin{proposition}[Trace maximization] \label{prop:trace}
Let $C \in \bR^{n \times n}$ and consider the optimization problem
\begin{equation} \label{eqn:trace.maximization}
\max\{ \tr(CP) : P \in \bR^{n \times n}, \|P\|_{2 \rightarrow 2} \leq 1 \}.
\end{equation}
\begin{itemize}
\item[(i)] The optimal value is $\|C\|_*$.
\item[(ii)] Let $C$ have singular value decomposition $U \Sigma V^{\top}$ with the convention that $\Sigma = \diag(\sigma_1(C), \ldots, \sigma_n(C))$. Let $r = \rank(C)$ and write
\[
U = \begin{bmatrix} U_1 & U_0 \end{bmatrix}, \quad V = \begin{bmatrix} V_1 & V_0 \end{bmatrix},
\]
where $U_1, V_1 \in \bR^{n \times r}$ and $U_0, V_0 \in \bR^{n \times (n-r)}$ (if $r = n$ then $U_0$ and $V_0$ are empty). Then the set of optimizers is
\begin{equation} \label{eqn:trace.optimizer}
\mathscr{P}(C) := \{ P = V_1 U_1^{\intercal} + V_0 K U_0^{\intercal} : K \in \bR^{(n - r) \times (n - r)}, \|K\|_{2 \rightarrow 2} \leq 1\}.
\end{equation}
\end{itemize}
\end{proposition}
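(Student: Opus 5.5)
We reduce everything to the diagonal case via the singular value decomposition. Write $C = U\Sigma V^{\intercal}$ and, for any $P$ with $\|P\|_{2\rightarrow 2}\le 1$, set $Q := V^{\intercal} P U$. Orthogonal invariance of the spectral norm gives $\|Q\|_{2\rightarrow 2} = \|P\|_{2\rightarrow 2}\le 1$. Cyclicity of the trace gives
\[
\tr(CP) = \tr(U\Sigma V^{\intercal} P) = \tr(\Sigma V^{\intercal} P U) = \tr(\Sigma Q) = \sum_{i=1}^n \sigma_i(C) Q_{[i,i]}.
\]
Since $|Q_{[i,i]}| = |e_i^{\intercal} Q e_i| \le \|Q\|_{2\rightarrow 2} \le 1$, we get $\tr(CP)\le \sum_i \sigma_i(C) = \|C\|_*$ by \eqref{eqn:nuclear.norm.trace}. (Alternatively, von Neumann's inequality \eqref{eqn:trace.inequality} gives the same bound.) Choosing $P = VU^{\intercal}$, i.e.\ $Q = I$, attains the bound, which proves (i).

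For (ii), the first step is to pin down which $Q$ attain equality. Equality forces $Q_{[i,i]} = 1$ for every $i \le r$, since $\sigma_i(C) > 0$ exactly for those indices. The key lemma is that a contraction $Q$ with $Q_{[i,i]} = 1$ must satisfy $Qe_i = e_i$ and $Q^{\intercal} e_i = e_i$. To see this, note $\|Qe_i\|_2^2 = \sum_j Q_{[j,i]}^2 \le 1$. Because the $i$-th term already equals $1$, all off-diagonal entries of column $i$ vanish. Applying the same argument to $Q^{\intercal}$, which is also a contraction, kills the off-diagonal entries of row $i$. Hence $Q$ is block diagonal, $Q = \diag(I_r, K)$ with $K \in \bR^{(n-r)\times(n-r)}$. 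Moreover $\|K\|_{2\rightarrow 2} \le \|Q\|_{2\rightarrow 2} \le 1$, because $K$ is a compression of $Q$.

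Transforming back, we have $P = VQU^{\intercal} = V_1U_1^{\intercal} + V_0 K U_0^{\intercal}$, which shows that every optimizer lies in $\mathscr{P}(C)$. Conversely, take any $P$ of this form with $\|K\|_{2\rightarrow 2}\le 1$. Then $Q = \diag(I_r,K)$ has spectral norm $\max\{1,\|K\|_{2\rightarrow 2}\} \le 1$, so $P$ is feasible. Its value is $\tr(\Sigma Q) = \sum_{i\le r}\sigma_i(C) = \|C\|_*$, since $\sigma_i(C) = 0$ for $i > r$. The degenerate cases $r = 0$ and $r = n$ just make one block empty and need no separate treatment.

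The main subtlety is the rigidity lemma that a unit diagonal entry of a contraction isolates its row and column. It is elementary, but it is what makes the characterization exact. It also makes the description independent of the choice of SVD in the degenerate case: the freedom in choosing $U_0, V_0$ is absorbed into $K$, and the freedom within repeated positive singular values is absorbed because $V_1U_1^{\intercal}$ is the same for every valid SVD.
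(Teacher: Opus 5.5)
Your proof is correct and follows essentially the same route as the paper's appendix proof. Both reduce via $\tilde P = V^{\intercal} P U$ to $\tr(\Sigma \tilde P)$, force $\tilde P_{[i,i]} = 1$ for $i \le r$, and then use the contraction property to get $\tilde P = \diag(I_r, K)$. The only difference is that you kill row and column $i$ directly through the norms of $Qe_i$ and $Q^{\intercal}e_i$, whereas the paper first shows $\tilde P_{1,1}$ is orthogonal (hence $I_r$) via the trace inequality and then separately shows the off-diagonal blocks vanish; your version is slightly more streamlined.
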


\begin{remark} \label{rmk:trace}
The set $\mathscr{P}(C)$ always contains orthogonal matrices: choosing $K \in \mathscr{O}(n - r)$ yields $P \in \mathscr{O}(n)$. In particular, letting $K = I_{n - r}$ shows that $P = VU^{\intercal} \in \mathscr{P}(C)$. When $C$ is invertible (so that $r = n$), this is the only element of $\mathscr{P}(C)$.
\end{remark}

We are now ready to express the Bures--Wasserstein distance in terms of Cholesky factors, which
are not even required to be lower triangular. To the best of our knowledge, the representation \eqref{eqn:Bures.Wasserstein.Cholesky} has not appeared in the literature.

\begin{proposition}[Bures--Wasserstein in terms of Cholesky factors] \label{prop:Wasserstein.Cholesky}
Let $A, B \in \mathscr{S}_+(n)$. Write $A = LL^{\intercal}$ and $B = MM^{\intercal}$, where $L, M \in \mathbb{R}^{n\times n}$.\footnote{Given $A$ and $B$, the factors $L, M \in \mathbb{R}^{n \times n}$ always exist but are generally not unique. The stated results hold for each choice of $L$ and $M$.}
\begin{itemize}
\item[(i)] We have
\begin{equation} \label{eqn:Bures.Wasserstein.Cholesky}
\begin{split}
\dist_{\BW}^2(A, B) &=  \|L\|_{\mathrm{F}}^2 + \|M\|_{\mathrm{F}}^2 - 2 \| L^{\intercal} M \|_*.
\end{split}
\end{equation}
\item[(ii)] Let $L^{\intercal}M$ have singular value decomposition $U \Sigma V^{\intercal}$. Consider a coupling $\tilde{\pi}$ of $(\cN_n(0, A), \cN_n(0, B))$ of the form $\tilde{\pi} = \cL(X, Y)$, where $(X, Y) = (L\epsilon^X, M \epsilon^Y)$ and 
\begin{equation*} 
\pi := \cL(\epsilon^X, \epsilon^Y) \in \Cpl(\cN_n(0, I), \cN_n(0, I)).
\end{equation*}
Then $\tilde{\pi}$ is a Brenier coupling between $\cN_n(0, A)$ and $\cN_n(0, B)$ if and only if
\begin{equation} \label{eqn:Wasserstein.coupling}
P := \bE_{\pi}[\epsilon^Y (\epsilon^X)^{\intercal}] \in \mathscr{P}(L^{\intercal}M).
\end{equation}
In particular, we may let $\pi$ be the Gaussian coupling
\[
\begin{bmatrix} \epsilon^X \\ \epsilon^Y \end{bmatrix} \sim \cN_{2n} \left( \begin{bmatrix} 0 \\ 0 \end{bmatrix}, \begin{bmatrix} I & P \\ P^{\intercal} & I \end{bmatrix} \right), \quad P \in \mathscr{P}( L^{\intercal}M).
\]
\end{itemize}
\end{proposition}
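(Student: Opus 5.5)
Both parts follow from one reduction: write every coupling of the two Gaussians through their driving noises, which turns the transport cost into the trace maximization of Proposition \ref{prop:trace}.

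\emph{Step 1: reduction to noises.} Take any coupling $\pi$ of $(\cN_n(0,I),\cN_n(0,I))$ and set $P := \bE_\pi[\epsilon^Y(\epsilon^X)^\intercal]$. Then
\[
\bE\|L\epsilon^X - M\epsilon^Y\|_2^2 = \tr(LL^\intercal) + \tr(MM^\intercal) - 2\tr\big(L\,\bE[\epsilon^X(\epsilon^Y)^\intercal]\,M^\intercal\big),
\]
and the last trace equals $\tr(L P^\intercal M^\intercal) = \tr(M^\intercal L P^\intercal) = \tr(L^\intercal M P)$ after transposing. So the cost is
\[
\|L\|_{\mathrm F}^2 + \|M\|_{\mathrm F}^2 - 2\tr(L^\intercal M P).
\]
The block matrix $\bE\big[(\epsilon^X;\epsilon^Y)(\epsilon^X;\epsilon^Y)^\intercal\big]$ has the form \eqref{eqn:correlation.matrix} and is positive semidefinite, so $\|P\|_{2\to2}\le 1$. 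Conversely, every $P$ with $\|P\|_{2\to 2}\le 1$ is realized by the Gaussian coupling displayed in (ii). By Proposition \ref{prop:trace}(i), the infimum of these costs over noise couplings is therefore
\[
\|L\|_{\mathrm F}^2 + \|M\|_{\mathrm F}^2 - 2\|L^\intercal M\|_*,
\]
and it is attained exactly when $P\in\mathscr P(L^\intercal M)$.

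\emph{Step 2: noise couplings are enough.} This is the main obstacle: showing that restricting to couplings of the form $(L\epsilon^X, M\epsilon^Y)$ loses nothing, particularly when $L$ or $M$ is singular. I see two routes.
\begin{itemize}
\item Direct route. Take an arbitrary coupling $(X,Y)$ of $\cN_n(0,A)$ and $\cN_n(0,B)$. Since $X$ has the law of $L\epsilon$, there is (possibly after enlarging the space with independent randomness) an $\epsilon^X\sim\cN_n(0,I)$ with $X = L\epsilon^X$. Concretely, $\epsilon^X = L^+X + (I - L^+L)\xi$ with an independent standard Gaussian $\xi$, where $L^+$ is the pseudoinverse; one checks the covariance is $I$ and $L\epsilon^X = X$ a.s. because $X\in\mathrm{range}(L)$ a.s. Do the same for $Y$, taking the auxiliary noises independent. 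Then $\cL(X,Y)$ arises from a noise coupling, so Step 1's infimum equals $\cW_2^2$.
\item Alternative route. Only the cross-covariance $\bE[XY^\intercal] = LPM^\intercal$ matters for the cost. The set of cross-covariances of couplings of the two Gaussians is exactly $\{C : \begin{bmatrix} A & C\\ C^\intercal & B\end{bmatrix}\succeq 0\}$, and this matches $\{LPM^\intercal : \|P\|_{2\to2}\le1\}$ by the standard factorization of PSD block matrices.
\end{itemize}
Either way, combining with \eqref{eqn:W2} gives $\dist_{\BW}^2(A,B) = \|L\|_{\mathrm F}^2 + \|M\|_{\mathrm F}^2 - 2\|L^\intercal M\|_*$, which is (i). As a sanity check, $\|L^\intercal M\|_* = \tr\big((M^\intercal LL^\intercal M)^{1/2}\big)$. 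Its nonzero eigenvalues agree with those of $(A^{1/2}BA^{1/2})^{1/2}$, since $M^\intercal A M$ and $A^{1/2}MM^\intercal A^{1/2}$ share nonzero spectra, so this also matches \eqref{eqn:Bures.Wasserstein} directly.

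\emph{Step 3: characterization (ii).} For a noise coupling $\pi$, Step 1 shows the cost of $\tilde\pi$ is $\|L\|_{\mathrm F}^2 + \|M\|_{\mathrm F}^2 - 2\tr(L^\intercal M P)$. Hence $\tilde\pi$ is optimal iff $\tr(L^\intercal MP) = \|L^\intercal M\|_*$, iff $P\in\mathscr P(L^\intercal M)$ by Proposition \ref{prop:trace}(ii). The Gaussian coupling with cross-covariance $P\in\mathscr{P}(L^\intercal M)$ is well defined because such $P$ satisfy $\|P\|_{2\to2}\le1$, so the block covariance is PSD by \eqref{eqn:correlation.matrix}.
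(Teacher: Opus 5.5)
Your proof is correct, but its main line for (i) differs from the paper's. The paper proves (i) by matrix algebra alone. It writes $\|L^{\intercal}M\|_* = \tr\big((L^{\intercal}BL)^{1/2}\big)$ and sets $H = B^{1/2}L$. Then $H^{\intercal}H = L^{\intercal}BL$ and $HH^{\intercal} = B^{1/2}AB^{1/2}$ have the same eigenvalues, so this is the trace term in \eqref{eqn:Bures.Wasserstein}. That is exactly your ``sanity check'', with $A^{1/2}M$ in place of $B^{1/2}L$. For (ii), the paper computes the cost \eqref{eqn:XminusY} of a noise coupling and applies Proposition~\ref{prop:trace}, taking the optimal value from the known formula \eqref{eqn:W2}.

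So your Step~2, which you call the main obstacle, is not needed for the statement as posed. Once (i) holds, $\tilde\pi$ is Brenier iff its cost equals $\dist_{\BW}^2(A,B)$, and no lifting of arbitrary couplings is required. What your route buys is self-containedness. Your lifting $\epsilon^X = L^+X + (I-L^+L)\xi$ is verified correctly: its covariance is $L^+L + (I-L^+L) = I$, and $LL^+X = X$ because $X \in \mathrm{range}(L)$ a.s. It shows directly that $\cW_2^2$ equals the minimized noise cost. Combined with your spectral check, it therefore re-derives \eqref{eqn:W2} for centered Gaussians instead of assuming it. It is also the classical counterpart of representing a Gaussian law through driving noise, which the paper does in the adapted setting via the chronological inverse in Theorem~\ref{thm:AW2.Gaussian.distributions}.

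One correction, inherited from the statement itself, concerns the displayed coupling in (ii). If $(\epsilon^X;\epsilon^Y)$ has covariance $\begin{bmatrix} I & P \\ P^{\intercal} & I \end{bmatrix}$ as displayed, then $\bE[\epsilon^Y(\epsilon^X)^{\intercal}] = P^{\intercal}$, not $P$. By your own Step~1, that coupling then costs $\|L\|_{\mathrm{F}}^2 + \|M\|_{\mathrm{F}}^2 - 2\tr(L^{\intercal}MP^{\intercal})$. It is Brenier iff $P^{\intercal} \in \mathscr{P}(L^{\intercal}M)$, which in general is a different condition from $P \in \mathscr{P}(L^{\intercal}M)$.

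For a concrete failure, let $e_1, e_2$ be the standard basis of $\bR^2$ and take $L = I$, $M = e_1e_2^{\intercal}$, and $P = e_2e_1^{\intercal}$. Then $\tr(L^{\intercal}MP) = 1 = \|L^{\intercal}M\|_*$, so $P \in \mathscr{P}(L^{\intercal}M)$. Yet the displayed coupling has cost $3$, while $\dist_{\BW}^2(A,B) = 1$. So the ``in particular'' coupling needs $P^{\intercal}$ in the upper-right block.

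Your phrase ``Gaussian coupling with cross-covariance $P$'' in Step~3 is right if it means $\bE[\epsilon^Y(\epsilon^X)^{\intercal}] = P$, but you should say so explicitly. Likewise, in Step~1 a given contraction $P$ is realized by the displayed coupling with parameter $P^{\intercal}$, not $P$. This costs nothing because the set of contractions is closed under transposition.
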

\begin{proof}
We prove (i) and (ii) together. From \eqref{eqn:nuclear.norm.trace}, we have $\|L^{\intercal} M\|_* = \tr \Sigma$. We first show that
\[
\tr \Sigma = \tr \left( (A^{\frac{1}{2}}BA^{\frac{1}{2}})^{\frac{1}{2}}  \right) = \tr \left( (B^{\frac{1}{2}}AB^{\frac{1}{2}})^{\frac{1}{2}}  \right),
\]
where the last equality follows from the symmetry of the Bures--Wasserstein distance. Since
\[
(L^{\intercal}M) (L^{\intercal}M)^{\intercal} = L^{\intercal} B L = U \Sigma^2 U^{\intercal},
\]
we have
\[
\tr \Sigma = \tr \left( (L^{\intercal} B L)^{\frac{1}{2}} \right).
\]
Let $H = B^{1/2} L$. Then $H^{\intercal} H = L^{\intercal} B L$ and $HH^{\intercal} = B^{1/2} A B^{1/2}$. Since $H^{\intercal} H$ and $HH^{\intercal}$ are positive semidefinite and have the same eigenvalues $\lambda_i$, taking square root and the trace shows that
\[
\tr \Sigma = \tr \left( (L^{\intercal} B L)^{\frac{1}{2}} \right) = \sum_{i = 1}^n \sqrt{\lambda_i} = \tr \left( (B^{\frac{1}{2}}AB^{\frac{1}{2}})^{\frac{1}{2}}  \right).
\]

If $(X, Y)$ is given by (ii), then
\begin{equation} \label{eqn:XminusY}
\begin{split}
\bE_{\pi} [\|X - Y\|_2^2] = \bE_{\pi} [\|L\epsilon^X - M\epsilon^Y\|_2^2] = \|L\|_{\mathrm{F}}^2 + \|M\|_{\mathrm{F}}^2 - 2 \tr (L^{\intercal}M P).
\end{split}
\end{equation}
By Proposition \ref{prop:trace}, this is equal to $\dist_{\mathrm{BW}}^2(A, B)$ if and only if $P \in \mathscr{P}( L^{\intercal}M)$.
\end{proof}

The Bures--Wasserstein distance can be expressed in terms of an {\it orthogonal Procrustes problem} \cite[Section 7.4.5]{HornJohnson2013}. Our version is a slight variant of \cite[Theorem 1]{BJL19} which is stated in terms of the square roots $A^{1/2}$ and $B^{1/2}$ rather than the Cholesky factors. Probabilistically, our Procrustes problem is equivalent to minimization over deterministic couplings of the form $\epsilon^Y = Q\epsilon^X$, where $\epsilon^X \sim \cN_n(0, I)$ and $Q \in \mathscr{O}(n)$.

\begin{corollary}[Procrustes representation] \label{cor:Wasserstein.Procrustes}
Under the setting of Proposition \ref{prop:Wasserstein.Cholesky}, we have
\begin{equation} \label{eqn:Wasserstein.Procrustes}
\dist_{\BW}(A, B) = \min_{Q \in \mathscr{O}(n)} \|L - M Q\|_{\mathrm{F}},
\end{equation}
and the minimum is attained if and only if $Q \in \mathscr{P}(L^{\intercal}M) \cap \mathscr{O}(n)$.
\end{corollary}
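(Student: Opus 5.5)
Expanding the Frobenius norm reduces the minimization over $\mathscr{O}(n)$ to the trace maximization already solved in Proposition \ref{prop:trace}. For $Q \in \mathscr{O}(n)$ we have $\|MQ\|_{\mathrm{F}}^2 = \tr(MQQ^{\intercal}M^{\intercal}) = \|M\|_{\mathrm{F}}^2$. Hence
\[
\|L - MQ\|_{\mathrm{F}}^2 = \|L\|_{\mathrm{F}}^2 + \|M\|_{\mathrm{F}}^2 - 2\tr(L^{\intercal}MQ),
\]
where we used $\tr(Q^{\intercal}M^{\intercal}L) = \tr(L^{\intercal}MQ)$. Minimizing $\|L - MQ\|_{\mathrm{F}}$ over $Q \in \mathscr{O}(n)$ is therefore equivalent to maximizing $\tr(L^{\intercal}MQ)$ over $Q \in \mathscr{O}(n)$.

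Every orthogonal matrix has spectral norm $1$, so $\mathscr{O}(n) \subset \mathscr{C}(n)$. Proposition \ref{prop:trace}(i) then gives $\tr(L^{\intercal}MQ) \le \|L^{\intercal}M\|_*$ for all $Q \in \mathscr{O}(n)$. By Remark \ref{rmk:trace}, the set $\mathscr{P}(L^{\intercal}M)$ contains orthogonal matrices, for instance $VU^{\intercal}$ where $L^{\intercal}M = U\Sigma V^{\intercal}$. Thus the supremum over $\mathscr{O}(n)$ equals the supremum over the whole ball, namely $\|L^{\intercal}M\|_*$, and it is attained. It follows that
\[
\min_{Q \in \mathscr{O}(n)} \|L - MQ\|_{\mathrm{F}}^2 = \|L\|_{\mathrm{F}}^2 + \|M\|_{\mathrm{F}}^2 - 2\|L^{\intercal}M\|_*,
\]
and by Proposition \ref{prop:Wasserstein.Cholesky}(i) the right-hand side equals $\dist_{\BW}^2(A, B)$. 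Taking square roots yields \eqref{eqn:Wasserstein.Procrustes}.

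For the characterization of minimizers, $Q \in \mathscr{O}(n)$ attains the minimum if and only if $\tr(L^{\intercal}MQ) = \|L^{\intercal}M\|_*$. Since $Q$ lies in the feasible set of \eqref{eqn:trace.maximization}, this holds exactly when $Q$ is an optimizer of that problem, i.e.\ $Q \in \mathscr{P}(L^{\intercal}M)$. Therefore the set of minimizers is $\mathscr{P}(L^{\intercal}M) \cap \mathscr{O}(n)$. The only step needing care is that restricting to orthogonal matrices does not lower the optimal value, and Remark \ref{rmk:trace} settles this. As a sanity check, the coupling $\epsilon^Y = Q\epsilon^X$ has cross-covariance $\bE[\epsilon^Y(\epsilon^X)^{\intercal}] = Q$, which matches Proposition \ref{prop:Wasserstein.Cholesky}(ii).
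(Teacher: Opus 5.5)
Your proposal is correct and follows essentially the same route as the paper: expand $\|L - MQ\|_{\mathrm{F}}^2 = \|L\|_{\mathrm{F}}^2 + \|M\|_{\mathrm{F}}^2 - 2\tr(L^{\intercal}MQ)$ and invoke Proposition \ref{prop:trace}. You also spell out explicitly, via Remark \ref{rmk:trace}, why restricting to $\mathscr{O}(n)$ does not lower the optimal value; the paper leaves this step implicit.
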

\begin{proof}
For $Q \in \mathscr{O}(n)$, we have
\begin{equation*}
\|L - MQ\|_{\mathrm{F}}^2 = \|L\|_{\mathrm{F}}^2 + \|M\|_{\mathrm{F}}^2 - 2 \tr (L^{\intercal}MQ).
\end{equation*}
The rest follows from Proposition \ref{prop:trace}.
\end{proof}

\section{Filtered Gaussian processes} \label{sec:filtered.Gaussian}
In Section \ref{sec:sub.filtered.Gaussian}, we introduce a natural space $\cFG \subset \cFP_2$ of filtered processes $\bX$ such that each stochastic process $X$ is a Gaussian process. We show that the adapted $2$-Wasserstein distance on $\cFG$ reduces to a pseudo-metric between the means and the Cholesky factors. 
In Section \ref{sec:minimal.Cholesky} we specialize to Gaussian distributions, and show that $(\FG, \AW_2)$, where $\FG := (\cFG/\sim) \subset \FP_2$ is the space of equivalence classes, is the $\AW_2$-completion of Gaussian distributions on $\bR^{Nd}$. 

\subsection{Filtered Gaussian processes} \label{sec:sub.filtered.Gaussian}

Let $\mathscr{L}(N, d)$ be the space of $Nd \times Nd$ matrices whose $d \times d$ blocks are lower triangular (we simply say block lower triangular). That is, if $L = (L_{t,s})_{t,s \in [N]} \in \mathscr{L}(N, d)$ where $L_{t,s} \in \bR^{d \times d}$ is the $(t, s)$-th block, then $L_{t,s} = 0_{d \times d}$ whenever $s > t$. 

\begin{definition}[Filtered Gaussian process] \label{def:filtered.Gaussian.process}
Given a mean vector $a \in \bR^{Nd}$ and a Cholesky factor $L \in \mathscr{L}(N,d)$, we define the filtered process $\bG^{a,L} \in \cFP_2$, called a filtered Gaussian process, by
\begin{equation} \label{eqn:filtered.Gaussian}
\bG^{a,L} := (\bR^{Nd}, \mathcal{B}(\bR^{Nd}), \cN_{Nd}(0, I), \bF = (\cF_t)_{t=1}^N, X = a + L\epsilon),
\end{equation}
where $\epsilon = (\epsilon_t)_{t = 1}^N$ is the canonical process on $(\bR^{Nd}, \mathcal{B}(\bR^{Nd}))$ and $\bF = \bF^\epsilon$ is the canonical filtration induced by $\epsilon$. When $a = 0$, we simply write $\bG^{L} := \bG^{0, L}$.

We let $\cFG := \{ \bG^{a,L}: a \in \bR^{Nd}, L \in \mathscr{L}(N,d)\} \subset \cFP_2$ be the space of all filtered Gaussian processes, and let $\FG \subset \FP_2$ be the space of equivalence classes. We write $\cFG(N, d)$ and $\FG(N, d)$ if we need to emphasize the dimensions.
\end{definition}

In Definition \ref{def:filtered.Gaussian.process}, the underlying filtered probability space is the same for all $a$ and $L$. The randomness and the flow of information are driven by an i.i.d.~sequence $(\epsilon_t)_{t \in [N]}$ of $d$-dimensional standard Gaussian random vectors. Since $L$ is block lower triangular, for each time $t \in [N]$,
\begin{equation} \label{eqn:L.epsilon}
X_t = a_t + L_{t,1} \epsilon_1 + \cdots + L_{t,t} \epsilon_t
\end{equation}
is the mean plus a linear combination of the noises up to and including time $t$.\footnote{This is analogous to continuous-time set-ups for adapted optimal transport between stochastic differential equations driven by Brownian motions; see \cite{CL24} and the references therein.} Note that we allow $L_{t,t}$ to be an arbitrary square matrix since $L$ is only required to be block lower triangular (also see Corollary \ref{cor:lower.triangular}). Clearly, $X_t$ is adapted to $\bF$, the filtration induced by the noise process $\epsilon$. The natural filtration induced by $X$ is equal to $\bF$  if and only if each $L_{t,t}$ is invertible. In particular, if $L = I$ is the identity matrix, then $X = \epsilon$ is a standard Gaussian white noise. In general, the distribution of $X = a + L\epsilon$ is
\begin{equation} \label{eqn:Gaussian.law}
\cL(X) = \cN_{Nd}(a, A), \quad A = LL^{\intercal}.
\end{equation}
Since the decomposition $A = LL^{\intercal}$ is not necessarily unique ($L$ is only block lower-triangular), the same distribution can be realized by different choices of $L$. The following example, taken from the proof of \cite[Proposition 5.2]{GW24}, illustrates this point.

\begin{example} \label{eg:simple.example}
Let $N = 2$ and $d = 1$. For $\theta \in \bR$, consider the filtered Gaussian process $\bX = \bG^{ L(\theta)}$, where
\begin{equation} \label{eqn:L.theta}
L(\theta) = \begin{bmatrix} 0 & 0 \\ \cos\theta & \sin\theta\end{bmatrix}.
\end{equation}
Since
\begin{equation} \label{eqn:A}
L(\theta)L(\theta)^{\intercal} = \begin{bmatrix} 0 & 0 \\ 0 & 1 \end{bmatrix} =: A, 
\end{equation}
the distribution of
\[
X = \begin{bmatrix} X_1 \\ X_2 \end{bmatrix} = \begin{bmatrix} 0 \\ (\cos \theta) \epsilon_1 + (\sin \theta) \epsilon_2 \end{bmatrix} = L(\theta) \epsilon
\]
is $\cN_2(0, A)$. 
The conditional distribution of $X_2$ depends on the available information. Since $X_1 = 0$, we have
\[
X_2 \mid X_1 \sim \cN_1(0, 1).
\]
On the other hand, since $\cF_1 = \sigma(\epsilon_1)$ contains information of $\epsilon_1$, we have
\[
X_2 \mid \cF_1 \sim \cN_1( (\cos \theta) \epsilon_1, \sin^2 \theta),
\]
which clearly depends on $\theta$. 
\end{example}


Thanks to the common filtered probability space in \eqref{eqn:filtered.Gaussian}, it is easy to describe bicausal couplings between elements of $\cFG$. Let $\bX = \bG^{a, L}$ and $\bY = \bG^{b, M}$ be two filtered Gaussian processes. The corresponding stochastic processes are denoted by $X = a + L\epsilon^X$ and $Y = b + M \epsilon^Y$, where $\epsilon^X$ and $\epsilon^Y$ are the Gaussian noise processes. This terminology will be used throughout the paper. Specializing Definition \ref{def:bicausal.coupling} to this context, a probability measure $\pi$ on $\bR^{Nd} \times \bR^{Nd} \cong \bR^{2Nd}$ is a bicausal coupling between $\bX$ and $\bY$ if and only if it satisfies the following properties:\footnote{See \cite[Proposition 5.1]{BBYZ2017}.}
\begin{itemize}
\item[(i)] Both marginals of $\pi$ are $\cN_{Nd}(0, I)$.
\item[(ii)] If $(\epsilon^X, \epsilon^Y) \sim \pi$, then for each $t$, the conditional joint distribution
\[
\bP_{\pi}(\epsilon_{t+1}^X \in \cdot, \epsilon_{t+1}^Y \in \cdot \mid \epsilon_{1:t}^X, \epsilon_{1:t}^Y)
\]
is a coupling of $(\cN_d(0, I), \cN_d(0, I))$.
\end{itemize}
Note that the means and Cholesky factors do not appear in these conditions: $\pi$ couples the underlying driving noises; this induces a coupling (in the ordinary sense) between the processes $X$ and $Y$. In fact, it will be seen that $\AW_2(\bG^{a, L}, \bG^{b, M})$ is attained by some bicausal $\pi$ which is jointly Gaussian. Hence, the following characterization of bicausal Gaussian couplings, proved in \cite{AHP2024}, is handy:

\begin{lemma}[Theorem 2.2 of \cite{AHP2024}] \label{lem:coupling.Gaussian}
Let $\bG^{a, L}, \bG^{b, M} \in \cFG$ and $\pi \in \Cpl_{\mathrm{bc}}(\bG^{a, L}, \bG^{b, M})$. The following are equivalent:
\begin{enumerate}
\item[(i)] $\pi \in \cP(\bR^{Nd} \times \bR^{Nd}) \cong  \cP(\bR^{2Nd})$ is jointly Gaussian. (Here and below, we identify $(x, y) \in \bR^{Nd} \times \R^{Nd}$ with the column vector $\begin{bmatrix} x^{\intercal} & y^{\intercal} \end{bmatrix}^{\intercal} \in \bR^{2Nd}$.)
\item[(ii)] There exists a block diagonal matrix $P =\textup{diag}(P_1,\dots,P_N)$, where each diagonal block $P_t \in \bR^{d \times d}$ satisfies $\|P_t\|_{2\rightarrow 2}\le 1$ (that is, each $P_t$ is a contraction and hence is a correlation matrix), such that
\begin{equation} \label{eqn:joint.Gaussian}
\pi = \pi^P := \cN_{2Nd} \left( \begin{bmatrix} 0 \\ 0 \end{bmatrix}, \begin{bmatrix} I & P \\ P^{\intercal} & I \end{bmatrix} \right).
\end{equation}
\end{enumerate}
\end{lemma}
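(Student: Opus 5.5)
The plan is to prove the two implications separately. The substantive direction is (i)$\Rightarrow$(ii); the converse is essentially a check.

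For (ii)$\Rightarrow$(i), $\pi^P$ is Gaussian by definition. It still has to be a bicausal coupling, but that is not needed for the equivalence, since $\pi$ is assumed bicausal. I will nonetheless note that the covariance matrix $\begin{bmatrix} I & P\\ P^\intercal & I\end{bmatrix}$ is positive semidefinite, by \eqref{eqn:correlation.matrix} applied blockwise to each contraction $P_t$.

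For (i)$\Rightarrow$(ii), let $\pi$ be jointly Gaussian and bicausal, with marginals $\cN_{Nd}(0,I)$. Then $\pi$ is centered with covariance $\begin{bmatrix} I & R\\ R^\intercal & I\end{bmatrix}$, where $R=\bE_\pi[\epsilon^X(\epsilon^Y)^\intercal]$ has $d\times d$ blocks $R_{s,t}=\bE_\pi[\epsilon^X_s(\epsilon^Y_t)^\intercal]$. The task is to show $R_{s,t}=0$ for $s\neq t$, and then that each diagonal block $P_t:=R_{t,t}$ is a contraction.

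Take $s>t$. By the first condition in \eqref{eqn:bicausal}, $\epsilon^X$ is conditionally independent of $\epsilon^Y_{1:t}$ given $\epsilon^X_{1:t}$. Hence
\[
\bE_\pi[\epsilon^X_s(\epsilon^Y_t)^\intercal]=\bE_\pi\big[\bE_\pi[\epsilon^X_s\mid \epsilon^X_{1:t},\epsilon^Y_{1:t}](\epsilon^Y_t)^\intercal\big]=\bE_\pi\big[\bE[\epsilon^X_s\mid\epsilon^X_{1:t}](\epsilon^Y_t)^\intercal\big]=0,
\]
because $\epsilon^X_s$ is independent of $\epsilon^X_{1:t}$ under the white-noise marginal. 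The case $s<t$ is symmetric and uses the second condition in \eqref{eqn:bicausal}. So $R=\diag(P_1,\dots,P_N)$.

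Each $P_t$ is the cross-covariance block of the Gaussian vector $(\epsilon^X_t,\epsilon^Y_t)$, so $\begin{bmatrix} I & P_t\\ P_t^\intercal & I\end{bmatrix}\succeq 0$. By \eqref{eqn:correlation.matrix} this gives $\|P_t\|_{2\to2}\le 1$. A centered Gaussian law is determined by its covariance, so $\pi=\pi^P$.

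The main obstacle is the measure-theoretic step of turning the conditional independence in \eqref{eqn:bicausal} into the vanishing of off-diagonal covariances. The plan is to use the tower property as displayed above, which needs only square integrability. Alternatively, one can use the characterization of bicausality cited after Definition \ref{def:filtered.Gaussian.process}: conditionally on the past, $\epsilon^X_{t+1}$ is standard Gaussian and in particular independent of $(\epsilon^X_{1:t},\epsilon^Y_{1:t})$. That route gives the same vanishing and avoids any subtlety.
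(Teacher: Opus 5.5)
Your proof is correct. The paper does not prove this lemma itself; it imports it from \cite{AHP2024}.

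Your key step is essentially the argument the paper uses to show $\Sigma_{s,t}=0$ in the proof of Theorem \ref{thm:adapted.Gelbrich}: you kill the off-diagonal blocks $R_{s,t}$ using bicausality and the tower property. In your setting, the white-noise property of $\epsilon^X$ plays the role of the martingale difference condition. You condition on $(\epsilon^X_{1:t},\epsilon^Y_{1:t})$, whereas the paper conditions on the past up to time $s-1$; both are valid.

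Your argument also shows that block-diagonality of $R$ and $\|P_t\|_{2\to 2}\le 1$ hold for every bicausal coupling. Gaussianity enters only in the final identification $\pi=\pi^P$.

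You are right that (ii)$\Rightarrow$(i) is trivial as stated. The converse fact that every $\pi^P$ is bicausal is not part of the statement, though the paper relies on it later (e.g.\ in Theorem \ref{thm:ensemble.equiv}). It follows at once: under $\pi^P$ the pairs $(\epsilon^X_t,\epsilon^Y_t)$ are independent across $t$, so the conditional law of the next pair is a fixed coupling of $\cN_d(0,I)$ with itself.
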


If $(\epsilon^X, \epsilon^Y) \sim \pi^P$, then  $(\epsilon_1^X, \epsilon_1^Y), \ldots, (\epsilon_N^X, \epsilon_N^Y)$ are jointly independent and, for each $t \in [N]$, we have
\begin{equation} \label{eqn:pi.P.conditional}
\begin{bmatrix}
\epsilon_t^X \\ \epsilon_t^Y
\end{bmatrix}
\sim \cN_{2d} \left( \begin{bmatrix} 0 \\ 0 \end{bmatrix} , \begin{bmatrix} I & P_t \\
 P_t^{\intercal} & I \end{bmatrix} \right).
\end{equation}
A straightforward computation shows that the transport cost under the Gaussian bicausal coupling $\pi^P$, $P = \diag(P_1, \ldots, P_N)$, is given by
\begin{equation} \label{eqn:pi.P.cost}
\begin{split}
\bE_{\pi} [ \| X - Y \|_2^2 ] &= \bE_{\pi} [ \| (a + L\epsilon^X) - (b + M\epsilon^Y) \|_2^2] \\
&= \|a - b\|_2^2 + \|L\|_{\mathrm{F}}^2 + \|M\|_{\mathrm{F}}^2 - 2 \tr ( (L^{\intercal}M) P ) \\
&= \|a - b\|_2^2 + \|L\|_{\mathrm{F}}^2 + \|M\|_{\mathrm{F}}^2 - 2\sum_{t = 1}^N \tr ( (L^{\intercal}M)_{t,t} P_t). 
\end{split}
\end{equation}

\begin{example}[Independent coupling] \label{eg:independent couping}
The independent coupling between $\bX = \bG^{a,L}$ and $\bY = \bG^{b,M}$ is $\pi^O$, where $P$ in  \eqref{eqn:joint.Gaussian} is the zero matrix. As the name suggests, under this coupling the noises $\epsilon^X$ and $\epsilon^Y$ are independent.  From \eqref{eqn:pi.P.cost}, its transport cost is given by 
\begin{equation} \label{eqn:independent.cost}
\begin{split}
\bE_{\pi^O}[\|X - Y\|_2^2] &= \|a - b\|_2^2 + \|L\|_{\mathrm{F}}^2 + \|M\|_{\mathrm{F}}^2.
\end{split}
\end{equation}
\end{example}

\begin{example}[Synchronous coupling] \label{eg:synchronous.coupling}
The synchronous coupling is $\pi^I$, where $P$ is the identity matrix. Equivalently, this is achieved by equating the noise processes: $\epsilon^X = \epsilon^Y = \epsilon \sim \cN_{Nd}(0, I)$. Its transport cost is given by 
\begin{equation} \label{eqn:synchronous.cost}
\begin{split}
\bE_{\pi^I}[\|X - Y\|_2^2] &= \|a - b\|_2^2 + \|L - M\| _{\mathrm{F}}^2 \\
&= \|a - b\|_2^2 + \|L\|_{\mathrm{F}}^2 + \|M\|_{\mathrm{F}}^2 - 2 \tr(L^{\intercal}M).
\end{split}
\end{equation}
This gives the upper bound
\begin{equation} \label{eqn:Frobenius.bound}
\AW_2^2(\bG^{a,L}, \bG^{b,M}) \leq \|a - b\|_2^2 + \|L - M\|_{\mathrm{F}}^2.
\end{equation}
Note that when $\tr(L^{\intercal}M) < 0$ the synchronous coupling is worse than the independent coupling. It is helpful to think of the synchronous coupling as a benchmark bicausal coupling. In Section \ref{sec:adapted.Brenier}, we study the adapted Brenier coupling which may be regarded as an alternative benchmark. 
\end{example}

The following result provides an explicit formula for the adapted $2$-Wasserstein distance between two filtered Gaussian processes and a characterization of optimal bicausal couplings that are jointly Gaussian.\footnote{When some $(L^{\intercal}M)_{t,t}$ is singular for some $t$, there exist non-Gaussian optimal bicausal couplings. These couplings can be described in the manner of \cite[Corollary 4.4]{GW24}. For our purposes, it suffices to restrict to Gaussian couplings which lead to  cleaner statements throughout the paper.} The corresponding Procrustes problem is given in Theorem \ref{thm:Procrustes} below. 

\begin{theorem}[$\AW_2$ between filtered Gaussian processes] \label{thm:AW.filtered.Gaussians}
Let $a, b \in \bR^{Nd}$ and $L, M \in \mathscr{L}(N,d)$.
\begin{enumerate}
\item[(i)] We have
\begin{equation} \label{eqn:AW.filtered.Gaussians}
\begin{split}
\AW_2^2(\bG^{a, L}, \bG^{b,M}) = \|a - b\|_2^2 + \dist_{\mathrm{AW}}^2(L, M),
\end{split}
\end{equation}
where $\dist_{\mathrm{AW}}$ is the pseudo-metric on $\mathscr{L}(N, d)$ defined by
\begin{equation} \label{eqn:dist.AW}
\dist_{\mathrm{AW}}(L, M) := \Big(\|L\|_F^2 + \|M\|_F^2 - 2 \sum_{t=1}^N \|(L^{\intercal}M)_{t,t} \|_{*}\Big)^{1/2}.
\end{equation}
We let $\tilde{\mathscr{L}}(N, d)$ be the quotient space $\mathscr{L}(N, d) / \sim$, where $L \sim M$ if and only if $\dist_{\mathrm{AW}}(L, M) = 0$. We use the same symbol $\dist_{\mathrm{AW}}$ to denote the induced metric.
\item[(ii)]
There exists a Gaussian bicausal coupling that is optimal for $\AW_2(\bG^{a, L}, \bG^{b,M})$. A Gaussian bicausal coupling of the form $\pi^P$ defined by \eqref{eqn:joint.Gaussian}, where $P = \diag(P_1, \ldots, P_N)$ is block diagonal, is optimal if and only if
\[
P_t \in \mathscr{P}( (L^{\intercal} M)_{t,t}), \quad t \in [N].
\]
In particular, we may pick $P_t = V_t U_t^{\intercal}$, so that $P$ is block diagonal with orthogonal diagonal blocks (written $P \in \mathscr{O}(N, d)$).
\end{enumerate}
\end{theorem}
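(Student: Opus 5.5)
The argument has two halves: an upper bound from Gaussian couplings and a matching lower bound valid for every bicausal coupling. By the centering remark (Remark \ref{rmk:centering}), which applies equally to bicausal couplings because they couple only the noises, we may take $a=b=0$. The mean term $\|a-b\|_2^2$ appears because the cross term between $a-b$ and the centered noise vanishes.

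For the upper bound, fix any $P=\diag(P_1,\dots,P_N)$ with $\|P_t\|_{2\to2}\le 1$. Lemma \ref{lem:coupling.Gaussian} says $\pi^P$ is a bicausal coupling, and \eqref{eqn:pi.P.cost} gives its cost as $\|L\|_F^2+\|M\|_F^2-2\sum_t\tr((L^\intercal M)_{t,t}P_t)$. The blocks can be optimized separately. By Proposition \ref{prop:trace}, each term $\tr((L^\intercal M)_{t,t}P_t)$ is at most $\|(L^\intercal M)_{t,t}\|_*$, with equality exactly when $P_t\in\mathscr P((L^\intercal M)_{t,t})$. This yields the upper bound $\AW_2^2\le \dist_{\mathrm{AW}}^2(L,M)$. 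It also gives the ``if'' direction of (ii), and Remark \ref{rmk:trace} supplies the orthogonal choice $P_t=V_tU_t^\intercal$.

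The lower bound is the main step: every bicausal $\pi$, Gaussian or not, must cost at least $\dist_{\mathrm{AW}}^2$. Let $(\epsilon^X,\epsilon^Y)\sim\pi$. Write $R_{s,t}:=\bE_\pi[\epsilon^X_s(\epsilon^Y_t)^\intercal]$, so $R$ is the $Nd\times Nd$ cross-covariance matrix. The cost is $\|L\|_F^2+\|M\|_F^2-2\tr(L R M^\intercal)=\|L\|_F^2+\|M\|_F^2-2\tr(M^\intercal L R)$. I claim bicausality forces $R$ to be block diagonal. Take $s<t$. Condition (ii) in the characterization of bicausal couplings says that, given $\epsilon^X_{1:t-1},\epsilon^Y_{1:t-1}$, the conditional law of $\epsilon^Y_t$ is $\cN_d(0,I)$. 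Hence $\bE[\epsilon^Y_t\mid \epsilon^X_{1:t-1},\epsilon^Y_{1:t-1}]=0$. Since $\epsilon^X_s$ is measurable with respect to this conditioning, $R_{s,t}=0$, and the case $s>t$ is symmetric. Each diagonal block $R_{t,t}$ is the cross-covariance of two standard $d$-dimensional Gaussians, so $\|R_{t,t}\|_{2\to2}\le 1$ by \eqref{eqn:correlation.matrix}. Therefore
\[
\tr(M^\intercal L R)=\sum_t \tr((M^\intercal L)_{t,t}R_{t,t})\le \sum_t\|(M^\intercal L)_{t,t}\|_*=\sum_t\|(L^\intercal M)_{t,t}\|_*,
\]
using Proposition \ref{prop:trace}(i) and the transpose invariance of the nuclear norm. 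This proves the lower bound, hence (i), and shows that a Gaussian optimizer exists. The expected obstacle is matching the transpose and ordering conventions between $R$, $P$ and $(L^\intercal M)_{t,t}$ in \eqref{eqn:pi.P.cost}; I would simply follow that formula as given.

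For the ``only if'' direction of (ii), take $\pi^P$ optimal. Its cost equals the bound, so $\sum_t\tr((L^\intercal M)_{t,t}P_t)=\sum_t\|(L^\intercal M)_{t,t}\|_*$. Each summand is at most the corresponding nuclear norm, so every summand attains equality, and Proposition \ref{prop:trace}(ii) gives $P_t\in\mathscr P((L^\intercal M)_{t,t})$. Finally, $\dist_{\mathrm{AW}}$ is a pseudo-metric because it is $\AW_2$ restricted to centered elements of $\cFG$, and $\AW_2$ satisfies the triangle inequality on $\cFP_2$ by the standard gluing of bicausal couplings. Symmetry is clear, and nonnegativity follows since the expression equals a minimal expected squared norm.
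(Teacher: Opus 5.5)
Your proof is correct, and its lower bound takes a genuinely different route from the paper's. The paper uses the dynamic programming principle for bicausal transport. It argues, by backward induction on the quadratic value functions, that each one-step problem has a jointly Gaussian optimizer, and only then restricts to couplings $\pi^P$ via Lemma \ref{lem:coupling.Gaussian}; the details of that induction are omitted.

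You avoid the dynamic programming principle with a second-moment argument that holds for every bicausal coupling. Bicausality gives $\epsilon^Y_t$ (and $\epsilon^X_t$) zero conditional mean given the joint past, so the cross-covariance $R$ is block diagonal. Its diagonal blocks are contractions because the covariance of $(\epsilon^X_t,\epsilon^Y_t)$ is positive semidefinite. This is the same mechanism as the paper's proof of Theorem \ref{thm:adapted.Gelbrich}, where the martingale difference property must be assumed; for filtered Gaussian processes it holds automatically.

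Your route has three advantages:
\begin{itemize}
\item it is self-contained and fully rigorous;
\item the existence of a Gaussian optimizer comes out as a by-product rather than an input;
\item since the cost of any bicausal coupling depends only on the blocks $R_{t,t}$, it characterizes exactly which bicausal couplings, Gaussian or not, are optimal.
\end{itemize}
The paper's route is the more general tool: the dynamic programming principle does not need the cost to be quadratic or $X,Y$ to be linear in the noise, and those are exactly the features your covariance argument uses.

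The transposition issue you flagged is real, and your own formula is the correct one. Under \eqref{eqn:joint.Gaussian} one has $\bE[\epsilon^X(\epsilon^Y)^\intercal]=P$. So the cross term of $\pi^P$ is $\sum_t\tr((M^\intercal L)_{t,t}P_t)=\sum_t\tr((L^\intercal M)_{t,t}P_t^\intercal)$, not $\sum_t\tr((L^\intercal M)_{t,t}P_t)$ as in \eqref{eqn:pi.P.cost}. The paper itself writes the cross term correctly in the proof of Theorem \ref{thm:adapted.Gelbrich}.

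For example, take $N=1$, $d=2$, $L=I$ and $M$ the rotation by $\pi/2$. The unique element $P=VU^\intercal=M^\intercal$ of $\mathscr{P}(M)$ forces $\epsilon^Y=M\epsilon^X$, hence $Y=-X$ and cost $8$. But $\AW_2=0$, attained at $P=M$.

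Part (i) and the existence claim are unaffected, because the nuclear norm and the contraction constraint are both invariant under transposition. The optimality condition in (ii), however, holds as written only if $P$ is read as $\bE_\pi[\epsilon^Y(\epsilon^X)^\intercal]$, the convention of Proposition \ref{prop:Wasserstein.Cholesky}. With \eqref{eqn:joint.Gaussian} taken literally, it should read $P_t^\intercal\in\mathscr{P}((L^\intercal M)_{t,t})$. If you apply your own formula to $\pi^P$ instead of deferring to \eqref{eqn:pi.P.cost}, your proof stays internally consistent.
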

\begin{proof}
The proof is an adaptation of the proofs of \cite[Theorems 2.4--2.5]{AHP2024} (and that of \cite[Theorem 1.1]{GW24}) to the filtered setting. Since the ideas are essentially the same, we only highlight the main steps and omit the details. 

Let $\bX = \bG^{a, L}$ and $\bY = \bG^{b, M}$. First, we use a dynamic programming principle (see \cite[Theorem 3.2]{acciaio2025multicausal} which extends \cite[Proposition 5.2]{BBYZ2017} to the filtered and multicausal setting) to express $\AW_2^2(\bX, \bY)$ as the value of an iterated optimization problem.

Define, for elements $\omega^{\bX} = \omega_{1:N}^{\bX}$ and $\omega^{\bY} = \omega^{\bY}_{1:N}$ of $(\bR^{d})^N$ which denote possible realizations of $\epsilon^X$ and $\epsilon^Y$,
\[
V_N(\omega^{\bX}_{1:N}, \omega^{\bY}_{1:N}) := \|(a + L \omega_{1:N}^{\bX}) - (b + M\omega_{1:N}^{\bY})\|_2^2,
\]
and define, inductively backward in time, 
\[
V_t(\omega_{1:t}^{\bX}, \omega_{1:t}^{\bY}) := \inf_{\pi_{t+1}} \int_{\bR^d \times \bR^d} V_{t+1}(\omega^{\bX}_{1:(t+1)}, \omega^{\bY}_{1:(t+1)}) \dd \pi_{t+1}(\omega_{t+1}^{\bX}, \omega_{t+1}^{\bY}),
\]
where $\omega_{1:t}^{\bX}, \omega_{1:t}^{\bY} \in (\bR^d)^t$ and the infimum is over
\begin{equation} \label{eqn:Gaussian.marginal}
\pi_{t+1} \in \Cpl(\cN_{d}(0, I), \cN_{d}(0, I)).
\end{equation}
Note that in $\bX$, $\cN_{d}(0, I)$ is the conditional distribution of the noise $\epsilon_{t+1}^X$ at time $t+1$ given $\epsilon_{1:t}^X$ (same for $\bY$). Then, $V_0$ (note that $\omega_{1:0}^{\bX}$ and $\omega_{1:0}^{\bY}$ are empty) is equal to $\AW_2^2(\bX, \bY)$.

Next, we observe that since the terminal value $V_N$ is a quadratic function of $\omega^{\bX}$ and $\omega^{\bY}$, and the marginals in \eqref{eqn:Gaussian.marginal} are always standard Gaussian, it can be shown by an induction, backward in time, that for each $t$ there exists an optimal coupling $\pi_{t+1}$ which is jointly Gaussian. 

Thus, to find $\AW_2(\bX, \bY)$, it suffices to optimize over Gaussian bicausal couplings. By Lemma \ref{lem:coupling.Gaussian}, any Gaussian $\pi \in \Cpl_{\mathrm{bc}}(\bX , \bY)$ has the form $\pi = \pi^P$ given by \eqref{eqn:joint.Gaussian}, where $P = \diag(P_1, \ldots, P_N)$ is block diagonal with $\|P_t\|_{2 \rightarrow 2} \leq 1$ for each $t$. From \eqref{eqn:pi.P.cost}, its transport is given by
\[
\|a - b\|_2^2 + \|L\|_{\mathrm{F}}^2 + \|M\|_{\mathrm{F}}^2 - 2\sum_{t = 1}^N \tr ( (L^{\intercal} M)_{t,t} P_t).
\]
It remains to solve
\[
\max_{P_t \in \bR^{d \times d} : \|P_t\|_{2 \rightarrow 2} \leq 1} \tr ( (L^{\intercal} M)_{t,t} P_t)
\]
individually for each $t \in [N]$. The rest follows from Proposition \ref{prop:trace}.
\end{proof}

\begin{example}
For $\theta, \phi \in \bR$, define $L(\theta), L(\phi) \in \mathscr{L}(2, 1)$ as in  \eqref{eqn:L.theta}. By Theorem \ref{thm:AW.filtered.Gaussians}, we have
\[
\AW_2^2(\bG^{L(\theta)}, \bG^{L(\phi)}) = 2(1 - |\cos \theta \cos \phi| - |\sin \theta \sin \phi|). 
\]
\end{example}

By Theorem \ref{thm:AW.filtered.Gaussians}, $(\FG, \AW_2)$ is isometric to the product space $\bR^{Nd} \times \tilde{\mathscr{L}}(N, d)$ equipped with the metric
\begin{equation} \label{eqn:metric.product.space}
(\|a - b\|_2^2 + \dist_{\mathrm{AW}}^2([L], [M]))^{1/2}.
\end{equation}
Observe that $\dist_{\AW}$ acts independently on the column blocks of $L$ and $M$. Let $L_{\cdot, t} \in \bR^{Nd \times d}$ be the $t$-th column block of $L$. We define $M_{\cdot, t}$ similarly. Rearranging \eqref{eqn:dist.AW}, we have
\begin{equation} \label{eqn:dist.AW.alternative}
\dist_{\mathrm{AW}}^2(L, M) = \sum_{t = 1}^N \left( \|L_{\cdot, t}\|_{\mathrm{F}}^2 + \|M_{\cdot, t}\|_{\mathrm{F}}^2 - 2 \| (L_{\cdot,t})^{\intercal} M_{\cdot, t} \|_*\right).
\end{equation}

The degeneracy of $\dist_{\mathrm{AW}}$, which defines the equivalence relation $L \sim M$, can be explicitly characterized in terms of multiplication by block orthogonal matrices. 

\begin{proposition}[Degeneracy of $\dist_{\mathrm{AW}}$] \label{prop:dist.AW.degeneracy}
For $L, M \in \mathscr{L}(N, d)$ the following are equivalent:
\begin{itemize}
\item[(i)] $\dist_{\mathrm{AW}}(L, M) = 0$, that is, $L \sim M$.
\item[(ii)] There exists $Q = \diag(Q_1, \ldots, Q_N)$, with $Q_t \in \mathscr{O}(d)$ for all $t \in [N]$, such that
\begin{equation} \label{eqn:equivalence.class}
M = LQ.
\end{equation}
That is, $M_{s,t} = L_{s,t}Q_t$ (or $L_{s,t} = M_{s,t} Q_t^{\intercal}$) for $s, t \in [N]$. We denote by $\mathscr{O}(N, d) \cong \prod_{t = 1}^N \mathscr{O}(d)$  the set of $Nd \times Nd$ block diagonal matrices whose diagonal blocks are orthogonal.
\end{itemize}
In particular, we have 
\begin{equation} \label{eqn:zero.same.cov}
\dist_{\AW}(L, M) = 0 \Rightarrow LL^{\intercal} = MM^{\intercal}
\end{equation}
(that is, the covariance matrices coincide) and
\begin{equation} \label{eqn:dist.invariance}
\dist_{\AW}(L, M) = \dist_{\AW}(L, MQ)
\end{equation}
for $L, M \in \mathscr{L}(N, d)$ and $Q \in \mathscr{O}(N, d)$.
\end{proposition}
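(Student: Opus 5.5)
The plan is to work column block by column block, using the decomposition \eqref{eqn:dist.AW.alternative}. For each $t$ put $\ell_t := L_{\cdot,t}$ and $m_t := M_{\cdot,t}$, both in $\bR^{Nd\times d}$. Then each summand
\[
\|\ell_t\|_{\mathrm{F}}^2+\|m_t\|_{\mathrm{F}}^2-2\|\ell_t^{\intercal}m_t\|_*
\]
is a nonnegative quantity. To see this, note that for any $Q_t\in\mathscr{O}(d)$ we have $\|\ell_t - m_tQ_t\|_{\mathrm{F}}^2 = \|\ell_t\|_{\mathrm{F}}^2+\|m_t\|_{\mathrm{F}}^2-2\tr(\ell_t^{\intercal}m_tQ_t)$. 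By Proposition \ref{prop:trace} and Remark \ref{rmk:trace}, $\max_{Q_t\in\mathscr{O}(d)}\tr(\ell_t^{\intercal}m_tQ_t)=\|\ell_t^{\intercal}m_t\|_*$, and the maximum is attained at some orthogonal $Q_t\in\mathscr{P}(\ell_t^{\intercal}m_t)$. Hence
\[
\|\ell_t\|_{\mathrm{F}}^2+\|m_t\|_{\mathrm{F}}^2-2\|\ell_t^{\intercal}m_t\|_* = \min_{Q_t\in\mathscr{O}(d)}\|\ell_t-m_tQ_t\|_{\mathrm{F}}^2 \ge 0.
\]
Summing over $t$ gives $\dist_{\AW}^2(L,M)=\min_{Q\in\mathscr{O}(N,d)}\|L-MQ\|_{\mathrm{F}}^2$. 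This is the block Procrustes version of Corollary \ref{cor:Wasserstein.Procrustes}.

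With this identity, both implications are short. For (i)$\Rightarrow$(ii): if $\dist_{\AW}(L,M)=0$, every summand vanishes because each is nonnegative. So for each $t$ there is an orthogonal minimizer $Q_t$ with $\ell_t=m_tQ_t$, that is, $m_t=\ell_tQ_t^{\intercal}$. Setting $Q:=\diag(Q_1^{\intercal},\dots,Q_N^{\intercal})\in\mathscr{O}(N,d)$ gives $M=LQ$. For (ii)$\Rightarrow$(i): if $M=LQ$, then $\|M\|_{\mathrm{F}}=\|L\|_{\mathrm{F}}$ and $(L^{\intercal}M)_{t,t}=(L^{\intercal}L)_{t,t}Q_t$. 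The nuclear norm is invariant under orthogonal multiplication, and $(L^{\intercal}L)_{t,t}=\ell_t^{\intercal}\ell_t$ is positive semidefinite, so its nuclear norm equals its trace $\|\ell_t\|_{\mathrm{F}}^2$. Summing over $t$ gives $\dist_{\AW}(L,M)=0$. Block lower triangularity causes no difficulty: right multiplication by a block diagonal matrix preserves it, and in any case (ii) is only an identity of matrices.

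For the consequences: \eqref{eqn:zero.same.cov} follows because $MM^{\intercal}=LQQ^{\intercal}L^{\intercal}=LL^{\intercal}$. For \eqref{eqn:dist.invariance}, note that $(L^{\intercal}MQ)_{t,t}=(L^{\intercal}M)_{t,t}Q_t$, so each nuclear norm is unchanged, and $\|MQ\|_{\mathrm{F}}=\|M\|_{\mathrm{F}}$. Alternatively, it follows from the triangle inequality for the pseudo-metric together with $\dist_{\AW}(M,MQ)=0$.

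The only point needing care is the existence of an \emph{orthogonal} maximizer in the degenerate case where $\ell_t^{\intercal}m_t$ is singular. Remark \ref{rmk:trace} supplies it by taking $K$ orthogonal in \eqref{eqn:trace.optimizer}. Everything else is direct computation.
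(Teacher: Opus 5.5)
Your proof is correct. Direction (ii)$\Rightarrow$(i) and the two consequences match the paper, and you also supply a step the paper leaves implicit: $\|(L^{\intercal}L)_{t,t}\|_* = \tr((L^{\intercal}L)_{t,t}) = \|L_{\cdot,t}\|_{\mathrm{F}}^2$, because $(L^{\intercal}L)_{t,t}$ is positive semidefinite.

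You take a genuinely different route for (i)$\Rightarrow$(ii). The paper bounds $\|(L_{\cdot,t})^{\intercal}M_{\cdot,t}\|_*$ by $\|L_{\cdot,t}\|_{\mathrm{F}}\|M_{\cdot,t}\|_{\mathrm{F}}$ using von Neumann's trace inequality and Cauchy--Schwarz, which gives the lower bound \eqref{eqn:AW.lower.bound}. It then analyzes the equality cases. A polar decomposition rewrites the nuclear norm as $\langle L_{\cdot,t}Q_t, M_{\cdot,t}\rangle_{\mathrm{F}}$, and equality in Cauchy--Schwarz together with equal Frobenius norms forces $M_{\cdot,t} = L_{\cdot,t}Q_t$.

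You instead prove the columnwise Procrustes identity first. This is the paper's Theorem \ref{thm:Procrustes}, which appears later but depends only on Proposition \ref{prop:trace}, so there is no circularity. A zero of a sum of nonnegative, attained minima then gives $\ell_t = m_tQ_t$ directly, with no equality-case analysis. The only delicate input is the existence of an orthogonal maximizer when $\ell_t^{\intercal}m_t$ is singular, and you correctly take it from Remark \ref{rmk:trace}.

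Your route is shorter and makes the nonnegativity of each summand in \eqref{eqn:dist.AW.alternative} explicit. The paper's route has the advantage of producing the quantitative bound \eqref{eqn:AW.lower.bound}, which is reused in the proof of Theorem \ref{thm:completeness} to show that Cauchy sequences are bounded. That bound also follows from your identity in one line, since $\|\ell_t - m_tQ_t\|_{\mathrm{F}} \geq \bigl|\|\ell_t\|_{\mathrm{F}} - \|m_t\|_{\mathrm{F}}\bigr|$ for every orthogonal $Q_t$.
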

\begin{proof}
(ii) $\Rightarrow$ (i): Suppose that \eqref{eqn:equivalence.class} holds. Then, $M_{s, t} = L_{s,t} Q_t$ for all $t,s \in [N]$. By direct computation, for each $t \in [N]$ we have $(L^{\intercal}M)_{t, t} = (L^{\intercal}L)_{t,t} Q_t$. Since the nuclear norm is invariant under (left or right) multiplication by an orthogonal matrix, we have $\|(L^{\intercal}M)_{t, t}\|_{*} = \| (L^{\intercal}L)_{t,t} \|_*$. Also, we have
\[
\|M\|_{\mathrm{F}}^2 = \tr( M^{\intercal} M) = \tr(MM^{\intercal}) = \tr( LQQ^{\intercal}L^{\intercal}) = \tr(LL^{\intercal}) = \|L\|_{\mathrm{F}}^2.
\]
It follows that
\[
\dist_{\mathrm{AW}}(L, M) = \dist_{\mathrm{AW}}(L, L) = 0.
\]
(i) $\Rightarrow$ (ii): We first derive a lower bound of $\dist_{\AW}(L, M)$ for $L, M \in \mathscr{L}(N, d)$. By the trace inequality and the Cauchy--Schwarz inequality, we have
\begin{equation} \label{eqn:trace.and.CS}
\begin{split}
\|(L_{\cdot,t})^{\intercal} M_{\cdot, t}\|_* &= \sum_{i = 1}^d \sigma_i ((L_{\cdot,t})^{\intercal} M_{\cdot, t}) \\
&\leq \sum_{i = 1}^d \sigma_i(L_{\cdot, t}) \sigma_i(M_{\cdot, t}) \\
&\leq \left( \sum_{i = 1}^d \sigma_i^2(L_{\cdot, t}) \right)^{\frac{1}{2}} \left( \sum_{i = 1}^d \sigma_i^2(M_{\cdot, t}) \right)^{\frac{1}{2}} \\
&= \| L_{\cdot, t} \|_{\mathrm{F}} \| M_{\cdot, t}\|_{\mathrm{F}}.
\end{split}
\end{equation}
Putting this into \eqref{eqn:dist.AW.alternative}, we get
\begin{equation} \label{eqn:AW.lower.bound}
\begin{split}
\dist_{\mathrm{AW}}^2(L, M) &\geq \sum_{t = 1}^N (\| L_{\cdot, t}\|_{\mathrm{F}}^2 + \|M_{\cdot, t}\|_{\mathrm{F}}^2 - 2    \| L_{\cdot, t} \|_{\mathrm{F}} \| M_{\cdot, t}\|_{\mathrm{F}}) \\
&= \sum_{t = 1}^N (\| L_{\cdot, t} \|_{\mathrm{F}} - \|M_{\cdot, t}\|_{\mathrm{F}})^2.
\end{split}
\end{equation}

Suppose that $\dist_{\mathrm{AW}}(L, M) = 0$. Then, all inequalities in \eqref{eqn:trace.and.CS} and \eqref{eqn:AW.lower.bound} become equalities. From \eqref{eqn:AW.lower.bound}, we have $\|L_{\cdot, t}\|_{\mathrm{F}} = \|M_{\cdot, t}\|_{\mathrm{F}}$.

On the other hand, for $t \in [N]$, let 
\begin{equation} \label{eqn:polar.decomposition}
(L_{\cdot, t})^{\intercal} M_{\cdot, t} = Q_t P_t,
\end{equation}
where $Q_t \in \mathscr{O}(d)$ and $P_t = \big(((L_{\cdot, t})^{\intercal} M_{\cdot, t})^{\intercal} (L_{\cdot, t})^{\intercal} M_{\cdot, t})\big)^{1/2} \in \mathscr{S}_+(d)$, be the polar decomposition of $(L^{\intercal} M)_{t, t}$. From \eqref{eqn:polar.decomposition}, $P_t = (L_{\cdot, t}Q_t)^{\intercal} M_{\cdot, t}$. Thus, 
\begin{equation} \label{eqn:Frobenius.inner.product}
\begin{split}
\| (L_{\cdot, t})^{\intercal} M_{\cdot, t} \|_* &= \tr \Big( ( (Q_tP_t)^{\intercal} (Q_tP_t) )^{1/2} \Big) \\
&= \tr P_t = \tr ((L_{\cdot, t} Q_t)^{\intercal} M_{\cdot, t}) \\
&=: \langle L_{\cdot, t} Q_t, M_{\cdot, t} \rangle_{\mathrm{F}},
\end{split}
\end{equation}
which is the Frobenius inner product between $L_{\cdot, t}Q_t$ and $M_{\cdot, t}$.

Combining \eqref{eqn:trace.and.CS} and \eqref{eqn:Frobenius.inner.product}, we have
\begin{equation} \label{eqn:CS}
\langle L_{\cdot, t} Q_t, M_{\cdot, t} \rangle_{\mathrm{F}} = \| L_{\cdot, t} \|_{\mathrm{F}} \| M_{\cdot, t}\|_{\mathrm{F}} = \| L_{\cdot, t} Q_t \|_{\mathrm{F}} \|M_{\cdot, t} \|_{\mathrm{F}},
\end{equation}
where the last equality holds since $Q_t \in \mathscr{O}(d)$. From the equality in the Cauchy--Schwarz inequality in \eqref{eqn:CS}, $L_{\cdot, t} Q_t$ and $M_{\cdot, t}$ are proportional. But since $\|L_{\cdot, t}Q_t \|_{\mathrm{F}} = \|L_{\cdot, t}\|_{\mathrm{F}} = \|M_{\cdot, t}\|_{\mathrm{F}}$, the constant of proportionality is $1$. Thus, we have $M_{\cdot, t} = L_{\cdot, t} Q_{ t}$. It follows that
\[
M = LQ, \quad \quad \text{where } Q := \diag(Q_1, \ldots, Q_N) \in \mathscr{O}(N, d).
\]

Finally, we note that \eqref{eqn:zero.same.cov} is immediate from \eqref{eqn:equivalence.class}, and \eqref{eqn:dist.invariance} is a corollary of the triangle inequality:
\[
|\dist_{\mathrm{AW}}(L, M) - \dist_{\mathrm{AW}}(L, MQ)| \leq  \dist_{\mathrm{AW}}(M, MQ) = 0.
\]
\end{proof}

Proposition \ref{prop:dist.AW.degeneracy} can be rephrased probabilistically as follows. Consider the filtered Gaussian process $\bG^{L}$ in which $X$ is given by $L\epsilon$, $\epsilon \sim \cN_{Nd}(0, I)$. Let $Q = \diag(Q_1, \ldots, Q_N) \in \mathscr{O}(N, d)$, and define $M = LQ$. Observe that
\[
\tilde{\epsilon} := Q^{\intercal} \epsilon = (Q_1^{\intercal} \epsilon_1, \ldots, Q_N^{\intercal} \epsilon_N) \sim \cN_{Nd}(0, I),
\]
and $M \tilde{\epsilon} = LQQ^{\intercal} \epsilon = L\epsilon$. The law of $(\epsilon, \tilde{\epsilon})$ is an element of $\Cpl_{\mathrm{bc}}(\bG^{L}, \bG^{LQ})$ with zero transport cost.\footnote{If $Q \in \mathscr{O}(Nd)$, then $Q\epsilon$ is still standard normal, but the coupling $(\epsilon, Q\epsilon)$ is not bicausal unless $Q$ is also block diagonal; see Lemma \ref{lem:coupling.Gaussian}.} Hence $\AW_2(\bG^{L}, \bG^{LQ}) = 0$. Moreover, this is the only possibility to have $\AW_2(\bG^L, \bG^M) = 0$.

\begin{example}
Let $N = 2$ and $d = 1$. Consider
\[
L = \begin{bmatrix} 0 & 0 \\ 1 & 1 \end{bmatrix} \quad \text{and} \quad M = 
\underbrace{\begin{bmatrix} 0 & 0 \\ 1 & 1 \end{bmatrix}}_\text{$L$} \underbrace{
\begin{bmatrix} -1 & 0 \\ 0 & 1 \end{bmatrix}}_\text{$Q$} =
\begin{bmatrix} 0 & 0 \\ -1 & 1 \end{bmatrix}.
\]
A straightforward computation shows that $\dist_{\mathrm{AW}}(L, M) = 0$ as predicted by Proposition \ref{prop:dist.AW.degeneracy}.
\end{example}

In Definition \ref{def:filtered.Gaussian.process} we allow Cholesky factors that are block lower triangular. In fact, for each equivalence class we can always pick a representative which is lower triangular with non-negative diagonal entries. Thus, from the perspective of metric geometry, it is equivalent to define $\cFG$ using either $\mathscr{L}(N, d)$ or $\mathscr{L}_+(Nd)$.

\begin{corollary} \label{cor:lower.triangular}
For $L \in \mathscr{L}(N, d)$, there exists $\tilde{L} \in \mathscr{L}_+(Nd)$ such that $L \sim \tilde{L}$.
\end{corollary}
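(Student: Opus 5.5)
By Proposition \ref{prop:dist.AW.degeneracy}, it suffices to find $Q = \diag(Q_1, \ldots, Q_N) \in \mathscr{O}(N, d)$ such that $\tilde{L} := LQ$ is lower triangular (in the scalar sense) with nonnegative diagonal entries. Since $\dist_{\AW}(L, LQ) = 0$ for every such $Q$, we would then have $L \sim \tilde{L}$.

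Right multiplication by $Q$ acts separately on each column block: $(LQ)_{s,t} = L_{s,t} Q_t$. Because $L$ is block lower triangular, the blocks with $s < t$ are zero and remain zero after multiplication. So the only scalar entries of $LQ$ that could violate lower triangularity lie in the diagonal blocks $L_{t,t}Q_t$. The task therefore reduces to a single-block statement: for each $C = L_{t,t} \in \bR^{d \times d}$, there exists $Q_t \in \mathscr{O}(d)$ such that $CQ_t \in \mathscr{L}_+(d)$.

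This single-block statement is the transpose of a QR decomposition. Take a QR decomposition $C^{\intercal} = \hat{Q} R$, with $\hat{Q} \in \mathscr{O}(d)$ and $R$ upper triangular. It exists for any square matrix, singular or not, for example via Householder reflections. Then $C \hat{Q} = R^{\intercal}$ is lower triangular. To make the diagonal nonnegative, multiply on the right by a diagonal sign matrix $D = \diag(\pm 1)$, with each sign chosen so that the corresponding diagonal entry of $R^{\intercal}D$ is $\geq 0$; if that entry is zero, either sign works. Right multiplication by $D$ flips entire columns, so lower triangularity is preserved. Set $Q_t := \hat{Q} D \in \mathscr{O}(d)$.

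Assembling the blocks, $\tilde{L} = LQ$ has zero blocks above the block diagonal, lower triangular diagonal blocks with nonnegative diagonal, and arbitrary blocks below the diagonal. Hence $\tilde{L} \in \mathscr{L}_+(Nd)$. The only step needing any care is the existence of QR for singular $C$, which is standard: Householder reflections or Gram–Schmidt with completion of an orthonormal basis both work without invertibility. Everything else is bookkeeping.
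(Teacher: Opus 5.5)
Your proposal is correct and is essentially the paper's proof: apply a QR decomposition to each $L_{t,t}^{\intercal}$ to get $Q_t \in \mathscr{O}(d)$ with $L_{t,t}Q_t \in \mathscr{L}_+(d)$, set $\tilde{L} = LQ$ with $Q = \diag(Q_1,\ldots,Q_N)$, and invoke Proposition~\ref{prop:dist.AW.degeneracy}. The only addition is your explicit sign-matrix step, which the paper absorbs by citing the version of the QR decomposition whose triangular factor already has nonnegative diagonal.
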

\begin{proof}
By the QR decomposition (see \cite[Theorem 2.1.14]{HornJohnson2013}), for each $t \in [N]$ there exist $Q_t \in \mathscr{O}(d)$ and $\tilde{L}_{t, t} \in \mathscr{L}_+(d)$ such that
\[
L_{t,t}^{\intercal} = Q_t \tilde{L}_{t, t}^{\intercal}, \quad \text{or equivalently} \quad \tilde{L}_{t,t} = L_{t,t}Q_t.
\]
Let $Q = \diag(Q_1, \ldots, Q_N) \in \mathscr{O}(N, d)$, and define $\tilde{L} = LQ$. Note that its $(t,t)$-th block is equal to $\tilde{L}_{t,t}$, defined above, which is lower triangular. Thus, $\tilde{L} \in \mathscr{L}_+(Nd)$. By Proposition \ref{prop:dist.AW.degeneracy}, we have $L \sim \tilde{L}$ and the proof is complete.
\end{proof}



\begin{theorem}[Procrustes representation] \label{thm:Procrustes}
For $L, M \in \mathscr{L}(N, d)$, we have
\begin{equation} \label{eqn:Procrustes.representation}
\begin{split}
\dist_{\mathrm{AW}}^2(L, M) = \min_{Q \in \mathscr{O}(N, d)} \|L - MQ\|_{\mathrm{F}}^2 = \sum_{t = 1}^N \min_{Q_t \in \mathscr{O}(d)}  \| L_{\cdot, t} - M_{\cdot, t} Q_t\|_{\mathrm{F}}^2.
\end{split}
\end{equation}
If $(L^{\intercal} M)_{t, t} = (L_{\cdot, t})^{\intercal} M_{\cdot, t}$ has singular value decomposition $U_t \Sigma_t V_t^{\intercal}$, the minimum is attained if and only if $Q_t \in \mathscr{P}( (L^{\intercal}M)_{t,t}) \cap \mathscr{O}(d)$. We let $\mathscr{Q}(L, M)$ be the set of $Q \in \mathscr{O}(N, d)$ that has this form.
\end{theorem}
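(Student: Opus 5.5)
The plan is to expand the Frobenius norm column block by column block and reduce to Proposition \ref{prop:trace}, in the same way Corollary \ref{cor:Wasserstein.Procrustes} was deduced from Proposition \ref{prop:Wasserstein.Cholesky}.

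First I establish the splitting over column blocks. For $Q = \diag(Q_1, \ldots, Q_N) \in \mathscr{O}(N, d)$, the $t$-th column block of $MQ$ is $M_{\cdot, t} Q_t$. Since the squared Frobenius norm is the sum of squared Frobenius norms of the column blocks,
\[
\|L - MQ\|_{\mathrm{F}}^2 = \sum_{t=1}^N \|L_{\cdot,t} - M_{\cdot,t}Q_t\|_{\mathrm{F}}^2 .
\]
The $Q_t$ range independently over $\mathscr{O}(d)$, so the minimum over $\mathscr{O}(N,d)$ is the sum of the blockwise minima. This gives the second equality in \eqref{eqn:Procrustes.representation}, provided each blockwise minimum is attained. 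It is, because $\mathscr{O}(d)$ is compact; alternatively, attainment follows from the explicit minimizer found below.

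Next I treat each block. Using $Q_tQ_t^{\intercal}=I$,
\[
\|L_{\cdot,t} - M_{\cdot,t}Q_t\|_{\mathrm{F}}^2 = \|L_{\cdot,t}\|_{\mathrm{F}}^2 + \|M_{\cdot,t}\|_{\mathrm{F}}^2 - 2\tr\big((L_{\cdot,t})^{\intercal}M_{\cdot,t}Q_t\big).
\]
Note that $(L_{\cdot,t})^{\intercal}M_{\cdot,t} = (L^{\intercal}M)_{t,t}$. Every orthogonal $Q_t$ satisfies $\|Q_t\|_{2\to2}\le 1$, so Proposition \ref{prop:trace}(i) gives $\tr((L^{\intercal}M)_{t,t}Q_t) \le \|(L^{\intercal}M)_{t,t}\|_*$. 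By Remark \ref{rmk:trace}, $\mathscr{P}((L^{\intercal}M)_{t,t})$ contains the orthogonal matrix $V_tU_t^{\intercal}$, so the bound is attained within $\mathscr{O}(d)$. Hence the blockwise minimum equals $\|L_{\cdot,t}\|_{\mathrm{F}}^2 + \|M_{\cdot,t}\|_{\mathrm{F}}^2 - 2\|(L^{\intercal}M)_{t,t}\|_*$. Summing over $t$ and comparing with \eqref{eqn:dist.AW.alternative} (equivalently \eqref{eqn:dist.AW}) gives the first equality.

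Finally, the characterization of minimizers. A given $Q_t \in \mathscr{O}(d)$ is a minimizer of the $t$-th block if and only if it achieves the maximum in \eqref{eqn:trace.maximization}, that is, if and only if $Q_t \in \mathscr{P}((L^{\intercal}M)_{t,t}) \cap \mathscr{O}(d)$. The maximum over the larger set of contractions coincides with the maximum over orthogonal matrices, so this identification is valid. Because the total cost is a sum of independent blockwise terms, $Q$ attains the global minimum exactly when every $Q_t$ attains its blockwise minimum. I expect no real obstacle. The one point needing care is this last equivalence, since optimality of the sum must force optimality of each summand; that holds because each summand is bounded below by its own minimum.
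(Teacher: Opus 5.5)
Your proposal is correct and follows the paper's proof: expand $\|L - MQ\|_{\mathrm{F}}^2$ column block by column block into $\|L_{\cdot,t}\|_{\mathrm{F}}^2 + \|M_{\cdot,t}\|_{\mathrm{F}}^2 - 2\tr((L^{\intercal}M)_{t,t}Q_t)$, then apply Proposition \ref{prop:trace} separately for each $t$. Your extra checks, that the maximum is attained within $\mathscr{O}(d)$ by Remark \ref{rmk:trace} and that global optimality is equivalent to blockwise optimality, just make explicit what the paper leaves implicit.
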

\begin{proof}
Let $L, M \in \mathscr{L}(N, d)$. For $Q = \diag(Q_1, \ldots, Q_N) \in \mathscr{O}(N, d)$, we have
\begin{equation*}
\begin{split}
\| L - MQ \|_{\mathrm{F}}^2 &= \sum_{t = 1}^N \| L_{\cdot, t} - M_{\cdot, t} Q_t \|_{\mathrm{F}}^2 \\
&= \sum_{t = 1}^N \left( \|L_{\cdot, t}\|_{\mathrm{F}}^2 + \|M_{\cdot, t}\|_{\mathrm{F}}^2 - 2 \tr ( ((L_{\cdot, t})^{\intercal} M_{\cdot, t}) Q_t) \right).
\end{split}
\end{equation*}
We may now apply Proposition \ref{prop:trace} for each $t \in [N]$. 
\end{proof}

\begin{theorem}[Completeness] \label{thm:completeness}
The metric space $(\tilde{\mathscr{L}}(N, d), \dist_{\mathrm{AW}})$ is complete. Equivalently, $\FG$ is a closed subspace of $(\FP_2, \AW_2)$. Moreover,
\[
\tilde{\mathscr{L}}_{++}(N d) := \{ [L] \in \tilde{\mathscr{L}}(N, d) : L \in \mathscr{L}_{++}(Nd)\}
\]
is dense in $\tilde{\mathscr{L}}(N, d)$.
\end{theorem}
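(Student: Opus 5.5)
Completeness follows from the Procrustes representation in Theorem \ref{thm:Procrustes}, which exhibits $\dist_{\mathrm{AW}}$ as the quotient of the Frobenius distance on $\mathscr{L}(N,d)$ by the right action of the compact group $\mathscr{O}(N,d)$. So $\dist_{\mathrm{AW}}([L],[M]) = \min_{Q} \|L - MQ\|_{\mathrm{F}}$, and by Proposition \ref{prop:dist.AW.degeneracy} the classes are exactly the orbits $L\,\mathscr{O}(N,d)$. This action preserves $\mathscr{L}(N,d)$ (right multiplication by a block diagonal matrix keeps block lower triangularity) and is isometric for $\|\cdot\|_{\mathrm{F}}$.

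Given a Cauchy sequence $([L^{(k)}])_k$, I will pass to a subsequence with $\dist_{\mathrm{AW}}([L^{(k)}],[L^{(k+1)}]) < 2^{-k}$. Then I choose representatives inductively: set $\hat L^{(1)} = L^{(1)}$, and given $\hat L^{(k)}$ pick $Q^{(k+1)} \in \mathscr{Q}(\hat L^{(k)}, L^{(k+1)})$ and set $\hat L^{(k+1)} = L^{(k+1)}Q^{(k+1)}$. This gives $\|\hat L^{(k)} - \hat L^{(k+1)}\|_{\mathrm{F}} < 2^{-k}$. The sequence $(\hat L^{(k)})$ is then Cauchy in the finite-dimensional, closed linear subspace $\mathscr{L}(N,d)$ of $\bR^{Nd\times Nd}$, so it converges in Frobenius norm to some $L \in \mathscr{L}(N,d)$. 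By the Frobenius bound \eqref{eqn:Frobenius.bound} (equivalently, $Q=I$ in the Procrustes representation), $\dist_{\mathrm{AW}}([\hat L^{(k)}],[L]) \le \|\hat L^{(k)} - L\|_{\mathrm{F}} \to 0$. A Cauchy sequence with a convergent subsequence converges, which proves completeness.

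The equivalent statement then follows from Theorem \ref{thm:AW.filtered.Gaussians}: $(\FG,\AW_2)$ is isometric to $\bR^{Nd}\times\tilde{\mathscr{L}}(N,d)$ with the product metric \eqref{eqn:metric.product.space}. A product of complete spaces is complete, so $\FG$ is complete and hence closed in $\FP_2$. Conversely, closedness of $\FG$ in the Polish space $\FP_2$ gives completeness.

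For density, I use Corollary \ref{cor:lower.triangular} to represent each $[L]$ by some $\tilde L \in \mathscr{L}_+(Nd)$. For $\delta>0$, the matrix $\tilde L + \delta I$ lies in $\mathscr{L}_{++}(Nd) \subset \mathscr{L}(N,d)$, and
\[
\dist_{\mathrm{AW}}([\tilde L+\delta I],[L]) \le \|\delta I\|_{\mathrm{F}} = \delta\sqrt{Nd} \to 0.
\]

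The only delicate step is the alignment of representatives in the completeness argument. A naive choice of representatives need not converge, since the orthogonal factors could oscillate. The inductive choice using optimal $Q$ from Theorem \ref{thm:Procrustes} handles this, and it relies on the minimum being attained, which holds because $\mathscr{O}(N,d)$ is compact.
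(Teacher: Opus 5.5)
Your proof is correct. The density argument and the transfer between $\tilde{\mathscr{L}}(N,d)$ and $\FG$ via the product isometry are the same as in the paper, but your completeness step works by a different mechanism.

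The paper does not align representatives at all. From the lower bound \eqref{eqn:AW.lower.bound}, $\sum_{t}(\|L_{\cdot,t}\|_{\mathrm{F}}-\|M_{\cdot,t}\|_{\mathrm{F}})^2\le \dist_{\mathrm{AW}}^2(L,M)$, it deduces that the column norms along a $\dist_{\mathrm{AW}}$-Cauchy sequence are Cauchy. Hence the original, unaligned representatives $L^{(n)}$ are bounded in Frobenius norm. Bolzano--Weierstrass then gives a Frobenius-convergent subsequence, and \eqref{eqn:Frobenius.bound} finishes exactly as in your proof.

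So the paper relies on local compactness of the finite-dimensional space $\mathscr{L}(N,d)$. It uses only the two-sided comparison between $\dist_{\mathrm{AW}}$ and Frobenius quantities and never needs Theorem~\ref{thm:Procrustes}. You instead use Theorem~\ref{thm:Procrustes} to build aligned representatives whose successive Frobenius distances are summable, and then you need only completeness, not compactness, of the ambient space.

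Your route is an instance of a general fact: the orbit metric $\inf_{Q}\|L-MQ\|_{\mathrm{F}}$ of an isometric group action on a complete space is complete. It would therefore carry over to infinite-dimensional analogues, where bounded sets need not be precompact. Attainment of the minimum is also not essential there, since $2^{-k}$-approximate minimizers would suffice.
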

\begin{proof}
Let $([L^{(n)}])_{n \geq 1}$ be a Cauchy sequence in $(\tilde{\mathscr{L}}(N, d), \dist_{\mathrm{AW}})$. By construction, $(L^{(n)})_{n \geq 1}$ is $\dist_{\AW}$-Cauchy in $\mathscr{L}(N, d)$. From \eqref{eqn:AW.lower.bound}, we have
\[
\sum_{t = 1}^N (\|L_{\cdot, t}^{(n)}\|_{\mathrm{F}} - \|L_{\cdot, t}^{(m)} \|_{\mathrm{F}} )^2 \leq \dist_{\mathrm{AW}}^2(L^{(n)}, L^{(m)}) \rightarrow 0, \quad n, m \rightarrow \infty.
\]
It follows that $L^{(n)}$ is bounded in the Frobenius norm. By the Bolzano--Weierstrass theorem, there exist a subsequence $L^{(n')}$ and a matrix $L \in \mathscr{L}(N, d)$ (which is closed in Frobenius norm), such that
\[
\lim_{n' \rightarrow \infty} \|L^{(n')} - L\|_{\mathrm{F}} = 0.
\]
Since $\dist_{\mathrm{AW}}$ is bounded above by the Frobenius distance by \eqref{eqn:Frobenius.bound}, we have
\[
\lim_{n' \rightarrow \infty} \dist_{\mathrm{AW}}(L^{(n')}, L) = 0.
\]
Since $([L^{(n)}])_{n \geq 1}$ is $\dist_{\mathrm{AW}}$-Cauchy in $\tilde{\mathscr{L}}(N, d)$, we have $\lim_{n \rightarrow \infty} [L^{(n)}] = [L]$. Hence $\tilde{\mathscr{L}}(N, d)$ is complete. That this is equivalent to the closedness of $\FG$ in $\FP_2$ follows from the isometry 
\[
(a, [L]) \in \bR^{dN} \times \tilde{\mathscr{L}}(N, d) \mapsto [\bG^{a, L}] \in \FG,
\]
where the former space is equipped with the metric defined by \eqref{eqn:metric.product.space}.

To show that $\mathscr{L}_{++}(N d) / \sim$ is dense in $\tilde{\mathscr{L}}(N, d)$, it suffices to show that for any $L \in \mathscr{L}(N, d)$, there exists a sequence $(L^{(n)})_{n \geq 1}$ in $\mathscr{L}_{++}(N, d)$ such that $\lim_{n \rightarrow \infty} \dist_{\mathrm{AW}}(L^{(n)}, L) = 0$. By Corollary \ref{cor:lower.triangular}, we may assume $L \in \mathscr{L}_+(Nd)$. Consider $L^{(n)} = L + I/n \in \mathscr{L}_{++}(Nd)$. By \eqref{eqn:Frobenius.bound} again, we have
\[
\dist_{\mathrm{AW}}(L^{(n)}, L) \leq \|L^{(n)} - L\|_{\mathrm{F}} = \frac{1}{n} \|I\|_{\mathrm{F}} = \frac{\sqrt{Nd}}{n} \rightarrow 0.
\]
\end{proof}

We close this subsection by noting that any two elements of $\FG$ can be joined by {\it some} $\FP_2$-geodesic contained in $\FG$. This property may be called ``weak geodesic convexity'', as opposed to the standard notion of geodesic convexity ({\it all} geodesics are contained in the set). As shown in \cite[Remark 2.6]{ABGHP26}, $\FG$ is {\it not} a geodesically convex subset of $\FP_2$. That is, for certain pairs of elements of $\FG$, there exists a geodesic not contained in $\FG$. 

\begin{proposition}[Weak geodesic convexity]\label{prop:geodesic.convexity}
Let $\bX_0 = \bG^{a_0, L_0}, \bX_1 = \bG^{a_1, L_1} \in \cFG$, where $(a_0, L_0), (a_1, L_1) \in \bR^{Nd} \times \mathscr{L}(N, d)$. Given $Q \in \mathscr{Q}(L_0, L_1)$, define $\hat{L}_1 = L_1 Q \in \mathscr{L}(N, d)$. For $u \in [0, 1]$, let $\hat{\bX}_u = \bG^{\hat{a}_u, \hat{L}_u} \in \cFG$, where
\begin{equation} \label{eqn:geodesic}
\hat{a}_u = (1 - u) a_0 + u a_1 \quad \text{and} \quad \hat{L}_u = L_0 + u (\hat{L}_1 - L_0).
\end{equation}
Then $\AW_2(\bX_0, \hat{\bX}_0) = \AW_2(\bX_1, \hat{\bX}_1) = 0$ and
\begin{equation} \label{eqn:geodesic2}
\AW_2(\hat{\bX}_u, \hat{\bX}_v) = |u - v| \AW_2(\hat{\bX}_0, \hat{\bX}_1), \quad u, v \in [0, 1].
\end{equation}
In particular, $([\hat{\bX}_u])_{u \in [0, 1]}$ is a constant speed geodesic from $[\bX_0]$ to $[\bX_1]$ contained in $\FG$.
\end{proposition}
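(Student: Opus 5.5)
Since $\hat{a}_u$ is affine in $u$ and $\AW_2^2$ splits as $\|a-b\|_2^2 + \dist_{\AW}^2(L,M)$ by Theorem \ref{thm:AW.filtered.Gaussians}, the mean part contributes $|u-v|^2\|a_1-a_0\|_2^2$. The plan therefore reduces to the Cholesky-factor identity $\dist_{\AW}(\hat L_u,\hat L_v) = |u-v|\,\dist_{\AW}(L_0,\hat L_1)$, after which \eqref{eqn:geodesic2} follows by adding the two squared pieces and taking square roots.

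The endpoint claims are immediate. We have $\hat{\bX}_0 = \bX_0$, and $\hat L_1 = L_1Q$ with $Q\in\mathscr{O}(N,d)$, so Proposition \ref{prop:dist.AW.degeneracy} gives $\dist_{\AW}(L_1,\hat L_1)=0$. Since $\hat a_1 = a_1$, this yields $\AW_2(\bX_1,\hat{\bX}_1)=0$. Also $\hat L_u\in\mathscr{L}(N,d)$, because $\mathscr{L}(N,d)$ is a linear space.

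For the factor identity I would compare the Frobenius upper bound with the Procrustes lower bound. The upper bound comes from \eqref{eqn:Frobenius.bound}:
\[
\dist_{\AW}(\hat L_u,\hat L_v)\le \|\hat L_u-\hat L_v\|_{\mathrm F} = |u-v|\,\|\hat L_1-L_0\|_{\mathrm F}.
\]
By Theorem \ref{thm:Procrustes}, since $Q\in\mathscr{Q}(L_0,L_1)$ is a minimizer, $\|L_0 - L_1Q\|_{\mathrm F} = \dist_{\AW}(L_0,L_1) = \dist_{\AW}(L_0,\hat L_1)$, using \eqref{eqn:dist.invariance} for the last equality. Hence $\dist_{\AW}(\hat L_u,\hat L_v)\le |u-v|\,D$, where $D := \dist_{\AW}(L_0,\hat L_1)$.

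The reverse inequality is the main point, and I would get it from the triangle inequality for the pseudo-metric $\dist_{\AW}$. For $u\le v$,
\[
D \le \dist_{\AW}(L_0,\hat L_u)+\dist_{\AW}(\hat L_u,\hat L_v)+\dist_{\AW}(\hat L_v,\hat L_1) \le uD + \dist_{\AW}(\hat L_u,\hat L_v) + (1-v)D,
\]
which forces $\dist_{\AW}(\hat L_u,\hat L_v)\ge (v-u)D$. Combining the factor identity with the mean part gives \eqref{eqn:geodesic2}, with $\AW_2(\hat{\bX}_0,\hat{\bX}_1)^2 = \|a_1-a_0\|_2^2 + D^2$. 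For the combined metric the same sandwich works directly: bound above via the synchronous coupling on the product metric \eqref{eqn:metric.product.space}, and below via its triangle inequality.

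The only genuinely nontrivial input is that the synchronous coupling between $L_0$ and $\hat L_1$ is optimal, which is exactly what the choice $Q\in\mathscr{Q}(L_0,L_1)$ secures. With this in hand, $[\hat{\bX}_u]$ is a constant speed geodesic from $[\bX_0]$ to $[\bX_1]$ lying in $\FG$.
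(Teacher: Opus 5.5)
Your proof is correct and follows the paper's argument. You take the Frobenius (synchronous-coupling) upper bound $\dist_{\AW}(\hat L_u,\hat L_v)\le |u-v|\,\|L_0-\hat L_1\|_{\mathrm F}$, use optimality of $Q$ in Theorem \ref{thm:Procrustes} to identify $\|L_0-\hat L_1\|_{\mathrm F}=\dist_{\AW}(L_0,L_1)$, and then force equality with the triangle inequality, exactly as the paper does. The only difference is that you split off the mean part explicitly through $\AW_2^2=\|a-b\|_2^2+\dist_{\AW}^2$ instead of invoking the centering reduction $a_0=a_1=0$; this is slightly more careful, since the means are interpolated as well.
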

\begin{proof}
From Remark \ref{rmk:centering}, we may assume $a_0 = a_1 = 0$. Clearly $\hat{\bX}_0 = \bX_0$. Since $Q \in \mathscr{O}(N, d)$, from Proposition \ref{prop:dist.AW.degeneracy} we have $\AW_2(\hat{\bX}_1, \bX_1) = 0$. By Theorem \ref{thm:Procrustes}, $Q$ is optimal for \eqref{eqn:Procrustes.representation} (with $L = L_0$ and $M = L_1$). It follows that
\[
\AW_2(\bX_0, \bX_1) = \dist_{\mathrm{AW}}(L_0, L_1) = \|L_0 - L_1 Q\|_{\mathrm{F}} = \|L_0 - \hat{L}_1\|_{\mathrm{F}}.
\]
Let $u \leq v$. By \eqref{eqn:Frobenius.bound} and \eqref{eqn:geodesic} , we have
\begin{equation*}
\begin{split}
\AW_2(\hat \bX_u, \hat \bX_v) &= \dist_{\mathrm{AW}}(\hat L_u, \hat L_v) \leq \|\hat L_u -  \hat L_v\|_{\mathrm{F}} \\
&= (v - u) \|L_0 - \hat{L}_1\|_{\mathrm{F}} = (v - u) \AW_2(\bX_0, \bX_1). 
\end{split}
\end{equation*}
By the triangle inequality for $\AW_2$, we have
\begin{equation*}
\begin{split}
\AW_2(\bX_0, \bX_1) &\leq \AW_2(\bX_0, \hat \bX_u) + \AW_2(\hat \bX_u, \hat \bX_v) + \AW_2(\hat \bX_v, \bX_1) \\
  &= (u + (v - u) + (1 - v)) \AW_2(\bX_0, \bX_1) = \AW_2(\bX_0, \bX_1).
\end{split}
\end{equation*}
Thus, the above inequalities are all equalities, and the proof is complete.
\end{proof}


\subsection{Minimal Cholesky factor} \label{sec:minimal.Cholesky}
In this subsection, we specialize Theorem \ref{thm:AW.filtered.Gaussians} to compute the adapted $2$-Wasserstein distance between arbitrary Gaussian distributions on $\bR^{Nd}$, or equivalently their canonical representations as filtered processes (Definition \ref{def:embedding}). This relaxes the non-degeneracy assumption in \cite[Theorem 1.1]{GW24} and \cite[Theorem 1.1]{AHP2024}. The key idea is to select, for each covariance matrix, a {\it minimal} Cholesky factor which is consistent with the natural  filtration induced by the corresponding Gaussian process.

\begin{proposition}[Minimal Cholesky factor] \label{prop:Chronological.Cholesky}
Let $n \geq 1$ and $A \in \mathscr{S}_+(n)$. Then there exists a unique $L \in \mathscr{L}_+(n)$ with the following properties:
\begin{enumerate}
\item[(i)] $LL^{\intercal} = A$.
\item[(ii)] If $L_{[i, i]} = 0$ then $L_{[j,i]} = 0$ for all $j \geq i$. That is, the $i$-th column $L_{[\cdot, i]}$ vanishes whenever the diagonal element $L_{[i,i]}$ does.
\end{enumerate}
We call $L$ the minimal Cholesky factor of $A$ and write $L = \mathcal{C}_{\min}(A)$.
\end{proposition}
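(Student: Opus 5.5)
We will prove existence and uniqueness together by induction on $n$, using the Gram--Schmidt/Cholesky recursion. The cleanest viewpoint is Gaussian: if $X \sim \cN_n(0, A)$, then $L$ should encode the successive regressions $X_{[i]} = \bE[X_{[i]} \mid X_{[1]}, \ldots, X_{[i-1]}] + \text{innovation}$, with $L_{[i,i]}^2$ the conditional variance. Condition (ii) says precisely that a coordinate with zero innovation contributes no noise direction. Equivalently, condition (ii) means that whenever $L_{[i,i]}=0$ the whole $i$-th column of $L$ is zero, since entries above the diagonal already vanish.

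\emph{Existence.} We partition
\[
A = \begin{bmatrix} A' & c \\ c^{\intercal} & \alpha \end{bmatrix}, \qquad A' \in \mathscr{S}_+(n-1),
\]
and by induction take $L' = \mathcal{C}_{\min}(A')$. We seek
\[
L = \begin{bmatrix} L' & 0 \\ \ell^{\intercal} & \lambda \end{bmatrix}, \qquad L'\ell = c, \qquad \|\ell\|_2^2 + \lambda^2 = \alpha, \qquad \lambda \geq 0.
\]
Since $A \succeq 0$, we have $c \in \mathrm{range}(A') = \mathrm{range}(L')$, so $L'\ell = c$ is solvable. To satisfy (ii) we impose $\ell_{[i]} = 0$ for every $i$ with $L'_{[i,i]} = 0$. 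This is harmless: those columns of $L'$ vanish, so they do not affect $L'\ell$. The remaining columns are those with $L'_{[i,i]} > 0$. In the square subsystem restricted to the corresponding rows and columns these columns form a lower triangular block with positive diagonal, so the reduced variables are uniquely determined by forward substitution. We must also check that the rows with $L'_{[i,i]}=0$ are consistent, which follows from $c \in \mathrm{range}(L')$. Next we set $\lambda = (\alpha - \|\ell\|_2^2)^{1/2}$. Nonnegativity of $\alpha - \|\ell\|_2^2$ follows from a Schur complement argument. With $\ell$ the minimal-norm-type solution supported on active columns, $\|\ell\|_2^2 = c^{\intercal} (A')^{+} c \leq \alpha$; alternatively this is the conditional variance of $X_{[n]}$ given $X_{[1:n-1]}$. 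The last column is $(0,\ldots,0,\lambda)^{\intercal}$, so (ii) holds automatically for it, and it holds for the earlier columns by induction together with our choice of $\ell$.

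\emph{Uniqueness.} Suppose $L, \tilde L$ both satisfy (i) and (ii). The leading $(n-1)\times(n-1)$ blocks both satisfy (i) and (ii) for $A'$, so by induction they coincide and equal $L'$. The last rows satisfy $L'\ell = L'\tilde\ell = c$. Both $\ell$ and $\tilde\ell$ vanish on the inactive indices by (ii), and on the active indices the triangular system with positive diagonal forces $\ell = \tilde\ell$. Then $\lambda^2 = \alpha - \|\ell\|^2 = \tilde\lambda^2$ with $\lambda, \tilde\lambda \geq 0$, so $\lambda = \tilde\lambda$.

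The main obstacle is the linear-algebra bookkeeping in the existence step. We must show that the restricted triangular system on the active indices has a solution that is consistent with the full equation $L'\ell = c$, which requires that rows with zero diagonal impose no extra constraint. We must also verify $\|\ell\|^2 \leq \alpha$. We will handle both at once probabilistically. Write $X' = L'\eta$, where $\eta$ is standard normal and only the active coordinates of $\eta$ matter. These active coordinates are then measurable functions of $X'$, namely the normalized innovations. Hence the projection of $X_{[n]}$ onto $\mathrm{span}(X')$ can be written as $\ell^{\intercal}\eta$ with $\ell$ supported on active coordinates, and this representation is unique. Its variance $\|\ell\|^2$ is bounded by $\mathrm{Var}(X_{[n]}) = \alpha$.
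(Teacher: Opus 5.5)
Your argument is correct and follows the route the paper indicates. The paper gives no detailed proof; it only remarks that the usual inductive Cholesky argument goes through once the zero-column condition (ii) is imposed, and your bordering step supplies exactly that modification: you solve $L'\ell = c$ with $\ell$ supported on the active indices (this $\ell$ is in fact the minimal-norm solution $(L')^{+}c$, so $\|\ell\|_2^2 = c^{\intercal}(A')^{+}c \le \alpha$ by the generalized Schur complement), and your uniqueness argument via the invertible triangular block $L'_{[\mathcal{I},\mathcal{I}]}$ on the active indices is sound.
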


This result is a slight refinement of the usual Cholesky decomposition, and the usual proof (e.g.~by induction on $n$) goes through with suitable modification to take the zero-column condition (ii) into account. When $A \in \mathscr{S}_{++}(n)$, $\cC_{\min}(A)$ is the unique $L \in \mathscr{L}_{++}(n)$ such that $LL^{\intercal} = A$. When $A$ is singular, there still exists $L \in \mathscr{L}_+(n)$ such that $LL^{\intercal} = A$, but this $L$ is not necessarily unique. The proposition states that uniqueness is restored if we additionally impose the zero-column condition. 

\begin{example} \label{eg:minimal.Choleky}
Consider the matrices $A \in \mathscr{S}_+(2)$ and $L(\theta) \in \mathscr{L}_+(2)$ in Example \ref{eg:simple.example}. It is easy to verify that
\[
\{ L \in \mathscr{L}_+(2): LL^{\intercal} = A\} = \left\{ L(\theta): \theta \in [0, \frac{\pi}{2}] \right\}.
\]
The minimal Cholesky factor of $A$ is given by
\[
\cC_{\min}(A) = L\Big(\frac{\pi}{2}\Big) = \begin{bmatrix} 0 & 0 \\ 0 & 1 \end{bmatrix}.
\]
\end{example}

The minimal Cholesky factor $L = \cC_{\min}(A)$ of $A$ may be singular. In several arguments it is necessary to define a canonical inverse-like matrix of $L$ that we denote by $L^{\ominus}$. Intuitively, if $X = L\epsilon$, then $L^{\ominus}X$ recovers the noises that are active in the product $L\epsilon$. We call $L^{\ominus}$ the {\it chronological inverse} of $L$. 

Here is the precise definition. Let $A \in \mathscr{S}_+(n)$ and $L = \cC_{\min}(A)$. Let
\begin{equation} \label{eqn:cI}
\mathcal{I} = \mathcal{I}(L) := \{ i \in [n]: L_{[i,i]} > 0\},
\end{equation}
and recall that the $i$-th column $L_{[\cdot, i]}$ vanishes whenever $i \notin \mathcal{I}$. Let $E = E(\mathcal{I}) \in \bR^{n \times k}$, where $k = |\mathcal{I}|$, be the matrix whose columns are the Euclidean basis vectors $\mathbf{e}_i \in \bR^n$, $i \in \mathcal{I}(L)$, arranged in increasing order. By construction, 
\begin{equation} \label{eqn:LII}
L_{[\cI, \cI]} = E^{\intercal} L E \in \mathscr{L}_{++}(k)
\end{equation}
is invertible.

\begin{definition}[Chronological inverse] \label{def:chroninv}
The chronological inverse of $L = \cC_{\min}(A)$, $A \in \mathscr{S}_+(n)$, is defined in terms of the notation above by
\begin{equation}\label{eq:chroninv}
L^{\ominus}:=E (L_{[\cI,\cI]})^{-1} E^{\intercal} \in \mathscr{L}_+(n).
\end{equation}
\end{definition}

If $L \in \mathscr{L}_{++}(n)$, then $L^{\ominus}$ coincides with the usual inverse $L^{-1}$. In general, the chronological inverse differs from the Moore--Penrose inverse but is more suitable for our purposes.

\begin{theorem}[$\AW_2$ between Gaussian distributions] \label{thm:AW2.Gaussian.distributions} { \ }
\begin{itemize}
\item[(i)] For any Gaussian distribution $\mu = \cN(a, A)$, where $(a, A) \in \bR^{Nd} \times \mathscr{S}_+(Nd)$, we have
\begin{equation} \label{eqn:Gaussian.claim}
\AW_2(\bG^{a, \cC_{\min}(A)}, \bX^{\mu}) = 0,
\end{equation}
where $\bX^{\mu}$ is the canonical representation given in Definition \ref{def:embedding}.
\item[(ii)] For $\mu = \cN_{Nd}(a, A)$ and $\nu = \cN_{Nd}(b, B)$, we have
\begin{equation} \label{eqn:AW2.Gaussian.distributions}
\AW_2^2(\mu, \nu) =  \|a - b\|_2^2 + \dist_{\AW}^2(\cC_{\min}(A), \cC_{\min}(B)).
\end{equation}
\end{itemize}
\end{theorem}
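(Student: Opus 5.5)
Part (ii) follows from (i): by the triangle inequality for $\AW_2$ on $\cFP_2$, $\AW_2(\mu,\nu) = \AW_2(\bX^\mu,\bX^\nu) = \AW_2(\bG^{a,\cC_{\min}(A)},\bG^{b,\cC_{\min}(B)})$, and Theorem \ref{thm:AW.filtered.Gaussians} gives \eqref{eqn:AW2.Gaussian.distributions}. So the work is in (i). By Remark \ref{rmk:centering} we take $a = 0$ and write $L = \cC_{\min}(A)$, $X = L\epsilon$ on the noise space. We must exhibit a bicausal coupling between $\bG^{L}$ and $\bX^\mu$ with zero cost. The natural candidate is the law of $(\epsilon, X)$, where $X = L\epsilon$ is viewed as the canonical path of $\bX^\mu$. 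This coupling is supported on the graph of $\epsilon\mapsto L\epsilon$, so its cost is zero. It remains to verify the two conditional independence conditions in \eqref{eqn:bicausal}.

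\textbf{Causality from $\epsilon$ to $X$.} For each $t$, we need $X_{1:t}$ to be independent of $\epsilon_{1:N}$ given $\epsilon_{1:t}$. This holds because $X_{1:t}$ is a measurable function of $\epsilon_{1:t}$, since $L$ is (block) lower triangular.

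\textbf{Causality from $X$ to $\epsilon$.} For each $t$, we need $\epsilon_{1:t}$ to be independent of $X_{1:N}$ given $X_{1:t}$. This is where minimality is used. Write $\epsilon_{1:t}$ in scalar coordinates as the active coordinates $\epsilon_{[\cI\cap[td]]}$ together with the inactive coordinates $\epsilon_{[i]}$, $i\notin\cI$. The key claim is that the active coordinates are recovered from $X_{1:t}$: $E^\intercal\epsilon$ restricted to time $\le t$ equals the corresponding entries of $L^{\ominus}X$. Because $L^{\ominus}$ is lower triangular, these entries depend only on $X_{1:t}$. To prove the claim, use that columns $i\notin\cI$ of $L$ vanish, so
\[
X = L\epsilon = LEE^\intercal\epsilon,
\]
and that the rows of $LE$ indexed by $\cI$ form the invertible triangular matrix $L_{[\cI,\cI]}$. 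Hence
\[
(L_{[\cI,\cI]})^{-1}E^\intercal X = E^\intercal\epsilon.
\]
This is triangular in time, so the time-$\le t$ active noises are functions of $X_{1:t}$. The inactive coordinates of $\epsilon$ are independent of everything else: $X$ does not depend on them at all, and they are independent of the active coordinates. Therefore, conditionally on $X_{1:t}$, the vector $\epsilon_{1:t}$ consists of a deterministic part plus independent inactive Gaussians. The remaining path $X_{t+1:N}$ depends only on the active coordinates. Given $X_{1:t}$ (equivalently, the active noises up to time $t$), $X_{t+1:N}$ is a function of the already-determined active noises and the future active noises, and the latter are independent of $\epsilon_{1:t}$. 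This gives the required conditional independence.

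\textbf{Main obstacle.} The delicate step is the second condition. It is exactly where a non-minimal factor such as $L(0)$ in Example \ref{eg:simple.example} fails, since there $X_1 = 0$ does not reveal $\epsilon_1$. I would state the recovery identity as a small lemma first and then deduce conditional independence with care about scalar versus block indexing: $\cI$ is indexed by scalar coordinates, and the time-$t$ block corresponds to the coordinates in $((t-1)d, td]$.
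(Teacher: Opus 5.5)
Your proof is correct and is essentially the paper's. Both rest on the identity $(L_{[\cI,\cI]})^{-1}E^{\intercal}(X-a) = E^{\intercal}\epsilon$, which shows that the active noises up to time $t$ generate the same $\sigma$-algebra as $X_{1:t}$.

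The only difference is organizational. The paper factors your coupling $(\epsilon, a+L\epsilon)$ through an intermediate process $\tilde{\bG}^{a,L}$ in which the inactive noises are set to zero, so that the filtrations coincide exactly, and then applies the triangle inequality. You instead handle the inactive coordinates directly through their independence, which is precisely the fact the paper leaves implicit when it calls the coupling $(\epsilon,\tilde{\epsilon})$ ``clearly'' bicausal.
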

\begin{proof}
(i) Let $L = \cC_{\min}(A)$ and consider the filtered Gaussian process
\[
\bG^{a, L} = (\bR^{Nd}, \cB(\bR^{Nd}), \cN_{Nd}(0, I), (\cF_t)_{t = 1}^{N}, X = a + L\epsilon),
\]
where $\epsilon$ is the canonical process on the path space $\bR^{Nd}$. 

As an intermediate step, consider the filtered process
\begin{equation} \label{eqn:X.tilde}
\tilde{\bG}^{a, L} := (\bR^{Nd}, \cB(\bR^{Nd}), \tilde{\bP}, (\tilde{\cF}_t)_{t = 1}^{N}, \tilde{X} = a + L\tilde{\epsilon}),
\end{equation}
where $\tilde{\epsilon} = (\tilde{\epsilon}_1, \ldots, \tilde{\epsilon}_N) = (\tilde{\epsilon}_{[1]}, \ldots, \tilde{\epsilon}_{[Nd]})$ is defined by
\begin{equation} \label{eqn:epsilon.tilde}
\tilde{\epsilon}_{[i]} := \left\{\begin{array}{ll}
        \epsilon_{[i]}, & \text{if } L_{[i,i]} > 0;\\
        0, & \text{if } L_{[i,i]} = 0,
\end{array}\right.
\end{equation}
$\tilde{\cF}_t := \sigma(\tilde{\epsilon}_1, \ldots, \tilde{\epsilon}_t)$ is the filtration induced by $\tilde{\epsilon}$, and $\tilde{\bP} := \cL(\tilde{\epsilon})$ is the law of $\tilde{\epsilon}$. That is, all components of the canonical process $\epsilon$ that are not active in the product $L\epsilon$ (due to the zero-column condition in Proposition \ref{prop:Chronological.Cholesky}) are replaced by zeros.

Consider the coupling $\pi = \cL(\epsilon, \tilde{\epsilon})$ (the joint distribution of $\epsilon$ and $\tilde{\epsilon}$) defined by letting $\epsilon \sim \cN_{Nd}(0, I)$ and then defining $\tilde{\epsilon}$ by \eqref{eqn:epsilon.tilde}. Clearly, $\pi$ is a bicausal coupling between $\bG^{a, L}$ and $\tilde{\bG}^{a, L}$, under which $X = \tilde{X}$ almost surely. It follows that $\AW_2(\bG^{a, L}, \tilde{\bG}^{a, L}) = 0$.

Next, we prove that $\AW_2(\tilde{\bG}^{a, L}, \bX^{\mu}) = 0$. Then, from the triangular inequality we have $\AW_2(\bG^{a, L}, \bX^{\mu}) = 0$. To this end, consider the process $\tilde{X}$ in \eqref{eqn:X.tilde}. Observe  that for any $i \in [Nd]$, we have
\begin{equation} \label{eqn:filtration.claim}
\sigma(\tilde{X}_{[1]}, \ldots, \tilde{X}_{[i]}) = \sigma(\tilde{\epsilon}_{[1]}, \ldots, \tilde{\epsilon}_{[i]}).
\end{equation}
Indeed, since $\tilde{X} = a + L\tilde{\epsilon}$ and $L$ is lower triangular, it is clear that $\sigma(\tilde{X}_{[1]}, \ldots, \tilde{X}_{[i]}) \subset \sigma(\tilde{\epsilon}_{[1]}, \ldots, \tilde{\epsilon}_{[i]})$. On the other hand, we have
\[
\tilde{\epsilon} = L^{\ominus} (\tilde X - a),
\]
where $L^{\ominus}$ is the chronological inverse of $L$, and the reverse inclusion follows. In particular, letting $i = d, 2d, \ldots, Nd$, we have
\begin{equation} \label{eqn:filtration.equal}
\cF_t^{\tilde{X}} := \sigma(\tilde{X}_1, \ldots, \tilde{X}_t) = \sigma(\tilde{\epsilon}_1, \ldots, \tilde{\epsilon}_t) = \tilde{\cF}_t,
\end{equation}
so $\tilde{X}$ and $\tilde{\epsilon}$ induce the same filtration.




From \eqref{eqn:filtration.equal}, $\tilde{\pi} := \mathcal{L}(\tilde{\epsilon}, \tilde{X})$ is a bicausal coupling between $\tilde{\bG}^{a, L}$ and $\bX^{\mu}$ with zero transport cost. It follows that $\AW_2(\tilde{\bG}^{a, L}, \bX^{\mu}) = 0$.

(ii) By (i), we have
\[
\AW_2(\bX^{\mu}, \bX^{\nu}) = \AW_2(\bG^{a, \cC_{\min}(A)}, \bG^{b, \cC_{\min}(B)}).
\]
We obtain \eqref{eqn:AW2.Gaussian.distributions} from Theorem \ref{thm:AW.filtered.Gaussians}.
\end{proof}

\begin{corollary}[$\AW_2$-completion of Gaussian distributions] \label{cor:law.completion}
Given $N$ and $d$, consider the set
\[
\mathscr{N}(N, d) := \{ \cN_{Nd}(a, A): a \in \bR^{Nd}, A \in \mathscr{S}_+(Nd)\}
\]
of Gaussian distributions on the path space $\bR^{Nd}$. Then, the completion of $(\{ [\bX^{\mu}] : \mu \in \mathscr N(N,d) \}, \AW_2)$ is $(\FG, \AW_2)$.
\end{corollary}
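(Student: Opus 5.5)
We need to show two things: the set $\{[\bX^{\mu}] : \mu \in \mathscr{N}(N,d)\}$ is contained in $\FG$ and is dense there, and $(\FG, \AW_2)$ is complete. Completeness is already available. By Theorem \ref{thm:completeness}, $(\tilde{\mathscr{L}}(N,d), \dist_{\AW})$ is complete, so $(\FG, \AW_2)$ is complete. The reason is the isometry $(a,[L]) \mapsto [\bG^{a,L}]$ onto $\FG$, with $\bR^{Nd}\times\tilde{\mathscr{L}}(N,d)$ equipped with the product metric \eqref{eqn:metric.product.space}, and a product of complete metric spaces is complete.

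For the inclusion, Theorem \ref{thm:AW2.Gaussian.distributions}(i) gives $\AW_2(\bG^{a,\cC_{\min}(A)}, \bX^{\mu}) = 0$ for $\mu = \cN_{Nd}(a,A)$. Hence $[\bX^{\mu}] = [\bG^{a,\cC_{\min}(A)}] \in \FG$.

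For density, take an arbitrary $[\bG^{a,L}] \in \FG$. By Corollary \ref{cor:lower.triangular} we may assume $L \in \mathscr{L}_+(Nd)$. Set $L^{(n)} = L + I/n \in \mathscr{L}_{++}(Nd)$, as in the proof of Theorem \ref{thm:completeness}, and put $A^{(n)} = L^{(n)}(L^{(n)})^{\intercal} \in \mathscr{S}_{++}(Nd)$. Because $A^{(n)}$ is strictly positive definite, its minimal Cholesky factor is its unique Cholesky factor in $\mathscr{L}_{++}(Nd)$, so $\cC_{\min}(A^{(n)}) = L^{(n)}$. With $\mu_n := \cN_{Nd}(a, A^{(n)})$, Theorem \ref{thm:AW2.Gaussian.distributions}(i) gives $[\bX^{\mu_n}] = [\bG^{a,L^{(n)}}]$. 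By \eqref{eqn:Frobenius.bound},
\[
\AW_2(\bX^{\mu_n}, \bG^{a,L}) = \dist_{\AW}(L^{(n)}, L) \le \|I\|_{\mathrm{F}}/n = \sqrt{Nd}/n \to 0.
\]
This shows density.

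A complete metric space containing a subset as a dense subspace is a completion of that subset, unique up to isometry. The identification here is the inclusion map, so the claim follows. There is no real obstacle in this argument. The one point to check is that $\cC_{\min}(A^{(n)})$ equals $L^{(n)}$, which holds because the Cholesky factor with positive diagonal is unique for strictly positive definite matrices.
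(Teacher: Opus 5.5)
Your proposal is correct and follows essentially the same route as the paper: the inclusion comes from Theorem~\ref{thm:AW2.Gaussian.distributions}(i), density from the perturbation $L + I/n$ used in Theorem~\ref{thm:completeness}, and completeness of $(\FG,\AW_2)$ from that same theorem. You also make explicit a step the paper leaves implicit, namely that $\cC_{\min}(L^{(n)}(L^{(n)})^{\intercal}) = L^{(n)}$ by uniqueness of the Cholesky factor in $\mathscr{L}_{++}(Nd)$, which is what connects the dense set $\tilde{\mathscr{L}}_{++}(Nd)$ to canonical representations of nondegenerate Gaussians.
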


\begin{proof}
From Theorem \ref{thm:AW2.Gaussian.distributions}(i), if $\mu = \cN_{Nd}(a, A)$, then
\[
[\bX^{\mu}] =[ \bG^{a, \cC_{\min}(A)}].
\]
(That is, $\AW_2(\bX^{\mu}, \bG^{a, \cC_{\min}(A)}) = 0$.) Thus, we may regard $\{ [\bX^{\mu}] : \mu \in \mathscr{N}(N, d) \}$ as a subspace of $(\FG, \AW_2)$.

From Theorem \ref{thm:completeness}, $\mathscr{L}_{++}(N, d) / \sim$ is dense in $\tilde{\mathscr{L}}(N, d)$ with respect to $\dist_{\AW}$. This (together with Theorem \ref{thm:AW.filtered.Gaussians}) implies that 
\[
\{[\bX^{\mu}] : \mu = \cN_{Nd}(a, A), a \in \bR^{Nd}, A \in \mathscr{S}_{++}(Nd)\}
\]
is dense in $\FG$. The proof is complete by noting that the metric space $(\FG, \AW_2)$ is complete by Theorem \ref{thm:completeness}. 
\end{proof}

We end this section by noting an optimality property of the minimal Cholesky factor.

\begin{proposition}[Optimality of minimal Cholesky factor] \label{prop:optimality}
Let $A \in \mathscr{S}_+(Nd)$. Then the minimum
\[
\min_{L \in \mathscr{L}(N, d): LL^{\intercal} = A} \AW_2( \bG^{L}, \bG^{I}) = \min_{L \in \mathscr{L}(N, d): LL^{\intercal} = A}  \dist_{\mathrm{AW}}(L, I)
\]
is attained by $L = \cC_{\min}(A)$.
\end{proposition}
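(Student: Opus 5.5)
The plan is to reduce the minimization to a pointwise (in $t$) matrix inequality on the diagonal blocks. Since $(L^{\intercal} I)_{t,t} = L_{t,t}^{\intercal}$, the formula \eqref{eqn:dist.AW} gives
\[
\dist_{\mathrm{AW}}^2(L, I) = \|L\|_{\mathrm{F}}^2 + Nd - 2\sum_{t=1}^N \|L_{t,t}\|_*,
\]
and $\|L\|_{\mathrm{F}}^2 = \tr(LL^{\intercal}) = \tr A$ is the same for every admissible $L$. So the claim is equivalent to showing that $\cC_{\min}(A)$ maximizes $\sum_t \|L_{t,t}\|_*$ over all $L \in \mathscr{L}(N,d)$ with $LL^{\intercal} = A$. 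Note that $\cC_{\min}(A) \in \mathscr{L}_+(Nd) \subset \mathscr{L}(N,d)$, so it is admissible.

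\textbf{Step 1 (upper bound).} For $X = L\epsilon$ with $\epsilon \sim \cN_{Nd}(0,I)$, the matrix $L_{t,t}L_{t,t}^{\intercal}$ is the conditional covariance of $X_t$ given $\epsilon_{1:t-1}$. This holds because
\[
X_t - \bE[X_t \mid \epsilon_{1:t-1}] = L_{t,t}\epsilon_t .
\]
Since $\sigma(X_{1:t-1}) \subset \sigma(\epsilon_{1:t-1})$, the residual variance given the smaller $\sigma$-algebra is larger in Loewner order. Hence
\[
L_{t,t}L_{t,t}^{\intercal} \preceq S_t ,
\]
where $S_t$ is the Schur complement of $A_{1:t-1,1:t-1}$ in $A_{1:t,1:t}$ (generalized Schur complement via pseudo-inverse if singular). $S_t$ is the conditional covariance of $X_t$ given $X_{1:t-1}$ and depends only on $A$. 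By operator monotonicity of the square root,
\[
\|L_{t,t}\|_* = \tr\big((L_{t,t}L_{t,t}^{\intercal})^{1/2}\big) \leq \tr(S_t^{1/2}).
\]
Summing over $t$ gives a bound that is independent of $L$.

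\textbf{Step 2 (equality for the minimal factor).} Let $L = \cC_{\min}(A)$. The columns of $L$ corresponding to inactive indices vanish, so $L\epsilon = L\tilde\epsilon$ with $\tilde\epsilon$ as in \eqref{eqn:epsilon.tilde}. It follows that
\[
L_{t,t}\epsilon_t = X_t - \sum_{s<t} L_{t,s}\tilde\epsilon_s = X_t - \bE[X_t \mid \tilde\epsilon_{1:t-1}] .
\]
(On the right, only active noises enter the residual.) By \eqref{eqn:filtration.equal} in the proof of Theorem \ref{thm:AW2.Gaussian.distributions}, we have $\sigma(\tilde\epsilon_{1:t-1}) = \sigma(X_{1:t-1})$, so the conditioning above is on $X_{1:t-1}$. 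Therefore $L_{t,t}L_{t,t}^{\intercal} = S_t$, and the bound of Step 1 is attained.

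\textbf{Main obstacle.} Step 2 is the delicate part. In the degenerate case one must check that the inactive coordinates of $\epsilon_t$ within the block $t$, as well as those in earlier blocks, do not create a gap between conditioning on the noise and conditioning on the path. The zero-column property of Proposition \ref{prop:Chronological.Cholesky}, combined with the chronological inverse, is exactly what handles this. The Loewner comparison in Step 1 is a standard fact about Gaussian conditioning (nested projections in $L^2$).
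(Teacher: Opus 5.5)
Your proposal is correct and follows essentially the same argument as the paper. It proceeds via the Loewner bound $L_{t,t}L_{t,t}^{\intercal} \preceq \Var(X_t \mid X_{1:(t-1)})$, which you obtain by nested conditioning where the paper uses the decomposition $X_t = L_{t,t}\epsilon_t + \sum_{s<t} L_{t,s}\epsilon_s$. It then uses monotonicity of $\tr(\cdot^{1/2})$, and gets equality for $\cC_{\min}(A)$ from $\sigma(\tilde\epsilon_{1:(t-1)}) = \sigma(X_{1:(t-1)})$, as established in the proof of Theorem \ref{thm:AW2.Gaussian.distributions}.
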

\begin{proof}
Let $L \in \mathscr{L}(N, d)$ be such that $LL^{\intercal} = A$. Consider the filtered process $\bG^{L}$ where $X = L\epsilon$. Fix $t \in [N]$ and write
\[
X_t = U_t + V_t, \quad \text{where } U_t := L_{t,t} \epsilon_t \text{ and } V_t := \sum_{s < t} L_{t,s} \epsilon_s.
\]
Consider the conditional covariance matrix with respect to $\cF_{t-1}^X = \sigma(X_s: s < t)$. Since $\epsilon_t$ is independent of $(\epsilon_s)_{s < t}$ and hence $(X_s)_{s < t}$, we have
\begin{equation} \label{eqn:conditional.variance}
\begin{split}
\Sigma_t &:= \Var(X_t \mid \cF_{t-1}^X) \\
&= \Var(U_t \mid \cF_{t-1}^X) + \Var(V_t \mid \cF_{t-1}^X) + 2\,\Cov(U_t,V_t \mid \cF_{t-1}^X)      \\
&= \Var(U_t) + \Var(V_t \mid \cF_{t-1}^X) \\
&= L_{t,t} L_{t,t}^{\intercal} + \Var(V_t \mid \cF_{t-1}^X)
\end{split}
\end{equation}
Since $U_t$ is independent of $(V_t,\cF_{t-1}^X)$, the conditional covariance term is zero. Thus $L_{t,t} L_{t,t}^{\intercal} \preceq \Sigma_t$ in Loewner partial order. By \cite[Corollary 7.7.4]{HornJohnson2013}, we have
\[
\| L_{t,t}^{\intercal} \|_* = \tr \big( (L_{t,t} L_{t,t}^{\intercal})^{\frac{1}{2}} \big) \leq \tr(\Sigma_t^{\frac{1}{2}}).
\]
It follows from Theorem \ref{thm:AW.filtered.Gaussians} that
\begin{equation} \label{eqn:optimality.lower.bound}
\AW_2^2(\bG^{L}, \bG^{I}) \geq \tr(A) + Nd - 2\sum_{t = 1}^N \tr(\Sigma_t^{\frac{1}{2}}).
\end{equation}

Now let $L = \cC_{\min}(A)$. Instead of $\bG^{L}$, consider the filtered process $\tilde{\bG}^{L} := \tilde{\bG}^{0, L}$ defined by \eqref{eqn:X.tilde}, where $\tilde{X} = L \tilde{\epsilon}$. Since $\cF^{\tilde{X}}_{t-1} = \cF^{\tilde{\epsilon}}_{t-1}$, the argument in \eqref{eqn:conditional.variance} shows that
\[
L_{t,t} L_{t,t}^{\intercal} = \Var(\tilde{X}_t \mid \cF_{t-1}^{\tilde{X}}) = \Sigma_t,
\]
where the last equality holds since $X$ and $\tilde{X}$ have the same Gaussian distribution $\cN(0, A)$. It follows that $L = \cC_{\min}(A)$ attains the lower bound in \eqref{eqn:optimality.lower.bound}, and the proposition is proved.
\end{proof}

\begin{example}
Consider again the matrix 
\[
A = \begin{bmatrix} 0 & 0 \\ 0 & 1 \end{bmatrix} \in \mathscr{S}_+(2)
\]
as in Example \ref{eg:minimal.Choleky}. Here $\cC_{\min}(A) = A$. We have
\[
\dist_{\AW}^2(\cC_{\min}(A), I) = 1 + 2 - 2(0 + 1) = 1.
\]
For $L(\theta)$ given by \eqref{eqn:L.theta}, $L(\theta)L(\theta)^{\intercal} = A$. We have
\[
\dist_{\AW}^2(L(\theta), I) = 1 + 2 - 2(0 + |\sin \theta|) = 3 - 2 \sin \theta \geq 1.
\]
\end{example}

On the other hand, for given $A, B \in \mathscr{S}_+(Nd)$, it is generally not true that 
\begin{equation*}
\begin{split}
&\min \{ \dist_{\mathrm{AW}}(L, M) : L, M \in \mathscr{L}_+(Nd), LL^{\intercal} = A, MM^{\intercal} = B \} 
\end{split}
\end{equation*}
is attained by $(L, M) = (\cC_{\min}(A), \cC_{\min}(B))$. Here is a simple counterexample.

\begin{example}
Let $N = 2$, $d = 1$, and consider
\[
A = \begin{bmatrix} 1 & 1 \\ 1 & 1 \end{bmatrix}, \quad B = \begin{bmatrix} 0 & 0 \\ 0 & 1 \end{bmatrix}.
\]
We have
\[
\cC_{\min}(A) = \begin{bmatrix} 1 & 0 \\ 1 & 0 \end{bmatrix}, \quad \cC_{\min}(B) = B.
\]
It follows that 
\[
\dist_{\AW}^2( \cC_{\min}(A), \cC_{\min}(B)) = 2 + 1 - 2 \cdot 0 = 3.
\]
On the other hand, for
\[
L = \cC_{\min}(A), \quad M = \begin{bmatrix} 0 & 0 \\ 1 & 0 \end{bmatrix},
\]
we have $MM^{\intercal} = B$ and
\[
\dist_{\mathrm{AW}}^2(L, M) = 2 + 1 - 2 \cdot 1 = 1,
\]
which is strictly smaller than $\dist_{\mathrm{AW}}^2( \cC_{\min}(A), \cC_{\min}(B))$. Intuitively, this is because this choice of $M$ allows us to couple the noises at time $1$.
\end{example}

\section{Gaussian martingales and Markov processes, and projections} \label{sec:mgle.Markov}
In this section, we study the subspaces of martingales and Markov processes within the space of filtered Gaussian processes. As before, we use the filtration $\bF$ induced by the noise process $\epsilon$. 

\begin{definition}[Filtered Gaussian martingales and Markov processes] { \ }
\begin{itemize}
\item[(i)] $\cFG_{\mathrm{mgle}}$ is the subset of $\bG^{a, L} \in \cFG$ such that $X$ is an $\bF$-martingale.
\item[(ii)] $\cFG_{\mathrm{Markov}}$ is the subset of $\bG^{a, L} \in \cFG$ such that $X$ is $\bF$-Markovian.\footnote{By definition, this means that $\bE[ \varphi(X_s) \mid \cF_t] = \bE[\varphi(X_s) \mid X_t]$ for all $s \geq t$ and bounded measurable function $\varphi$.}
\end{itemize}
We let $\FG_{\mathrm{mgle}} := \cFG_{\mathrm{mgle}} / \sim$ and $\FG_{\mathrm{Markov}} := \cFG_{\mathrm{Markov}} / \sim$ be the corresponding spaces of equivalence classes in $\FP_2$.
\end{definition}

The martingale and Markov properties can be characterized in terms of the Cholesky factor. In fact, in this context the martingale property implies the Markov property. The proof, which follows from the definition, is left to the reader.

\begin{proposition}\label{prop:martingale.Markov}
Let $\bG^{a, L} \in \cFG$, where $a \in \bR^{Nd}$ and $L \in \mathscr{L}(N, d)$.
\begin{itemize}
\item[(i)] $\bG^{a,L} \in \cFG_{\mgle}$ if and only if $a_1 = \cdots = a_N$ and
\[
L_{t, t} = L_{t+1, t} = \cdots = L_{N, t}, \quad t \in [N].
\]
That is,
\begin{equation*}
X_{t+1} = X_t + L_{t+1,t+1}\epsilon_{t+1}, \quad t \in [N-1].
\end{equation*}
\item[(ii)] $\bG^{a,L} \in \cFG_{\Markov}$ if and only if for each $t \in [N-1]$ there exists $\Phi_t \in \bR^{d \times d}$ such that 
\begin{equation} \label{eqn:Markov.transition.matrix}
\begin{bmatrix}
L_{t+1,1} & \cdots & L_{t+1,t}
\end{bmatrix} = \Phi_t \begin{bmatrix} L_{t,1} & \cdots & L_{t,t} \end{bmatrix}.
\end{equation}
That is,
\[
X_{t+1} = (a_{t+1} - \Phi_t a_t) + \Phi_t X_t  + L_{t+1,t+1} \epsilon_{t+1}, \quad t \in [N-1].
\]
\end{itemize}
In particular, we have $\cFG_{\mgle} \subset \cFG_{\Markov}$. We let
\begin{equation*}
\mathscr{L}_{\mgle}(N, d) := \{L \in \mathscr{L}(N, d)
: \bG^{L} \in \cFG_{\mgle}\}
\end{equation*}
be the set of Cholesky factors of Gaussian martingales. Similarly, we let
\[
\mathscr{L}_{\mathrm{Markov}}(N, d) := \{L \in \mathscr{L}(N, d)
: \bG^{L} \in \cFG_{\mathrm{Markov}}\}.
\]
\end{proposition}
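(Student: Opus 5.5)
Both parts come down to computing conditional expectations given $\cF_t = \sigma(\epsilon_1,\ldots,\epsilon_t)$. Under $\bG^{a,L}$ the noises $\epsilon_s$, $s > t$, are independent of $\cF_t$ and centered. So for $s \geq t$,
\[
\bE[X_s \mid \cF_t] = a_s + \sum_{r \leq t} L_{s,r}\epsilon_r,
\]
and $X_s - \bE[X_s \mid \cF_t] = \sum_{t < r \leq s} L_{s,r}\epsilon_r$ is independent of $\cF_t$. Hence the conditional law of $X_s$ given $\cF_t$ is Gaussian. Its mean is the expression above, and its covariance is deterministic.

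For (i), the martingale property means $\bE[X_{t+1}\mid\cF_t] = X_t$ for each $t \in [N-1]$ (integrability is automatic). By the formula this reads
\[
a_{t+1} + \sum_{r\le t} L_{t+1,r}\epsilon_r = a_t + \sum_{r \le t} L_{t,r}\epsilon_r \quad \text{a.s.}
\]
Since $\epsilon_1,\ldots,\epsilon_t$ are i.i.d.\ standard Gaussian, a linear-affine function of them vanishes a.s.\ only if all coefficients vanish. So the condition is equivalent to $a_{t+1}=a_t$ and $L_{t+1,r}=L_{t,r}$ for all $r\le t$. Chaining over $t$ gives $a_1=\cdots=a_N$ and $L_{t,t}=L_{t+1,t}=\cdots=L_{N,t}$. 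Substituting back yields $X_{t+1}=X_t+L_{t+1,t+1}\epsilon_{t+1}$. The converse is the same computation read backwards.

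For (ii), first suppose \eqref{eqn:Markov.transition.matrix} holds. Then $X_{t+1} = (a_{t+1}-\Phi_t a_t)+\Phi_t X_t + L_{t+1,t+1}\epsilon_{t+1}$ with $\epsilon_{t+1}$ independent of $\cF_t$. Iterating, for $s \geq t$ we get
\[
X_s = c + \Psi X_t + (\text{a linear combination of } \epsilon_{t+1},\ldots,\epsilon_s),
\]
where $c$ is deterministic and $\Psi = \Phi_{s-1}\cdots\Phi_t$. The law of $X_s$ given $\cF_t$ therefore depends only on $X_t$, and the Markov property follows by a standard freezing argument (independence lemma).

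Conversely, assume the Markov property and take $\varphi$ ranging over bounded functions of the coordinates. By a truncation/approximation argument, or directly because conditional laws agree, we obtain $\bE[X_{t+1}\mid\cF_t]=\bE[X_{t+1}\mid X_t]$. The left side is $a_{t+1}+\sum_{r\le t}L_{t+1,r}\epsilon_r$. By Gaussian conditioning, the right side is affine in $X_t$, say $\alpha + \Phi_t X_t$, where $\Phi_t = \Cov(X_{t+1},X_t)\Var(X_t)^{+}$ via the pseudo-inverse. Equating the two as affine functions of $(\epsilon_1,\ldots,\epsilon_t)$ and comparing coefficients gives \eqref{eqn:Markov.transition.matrix}; comparing constants fixes $\alpha=a_{t+1}-\Phi_t a_t$. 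This step, justifying passage from bounded $\varphi$ to the mean and handling a singular $\Var(X_t)$, is the main technical point. The pseudo-inverse formula still gives the a.s.\ correct conditional expectation because $X_t - a_t$ lies a.s.\ in the range of $\Var(X_t)$.

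Finally, $\cFG_{\mgle}\subset\cFG_{\Markov}$ follows by taking $\Phi_t=I$ in (ii), using the characterization in (i).
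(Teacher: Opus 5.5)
Your proof is correct, and it is the direct verification from the definitions that the paper omits as ``left to the reader'': compute $\bE[X_{t+1}\mid\cF_t]$, compare coefficients of the independent Gaussian noises, and for the Markov converse use that $\bE[X_{t+1}\mid X_t]$ is affine in $X_t$. Your range argument for singular $\Var(X_t)$ and your iteration to the multi-step Markov property for $s \geq t$ are both sound.
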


From \eqref{eqn:Markov.transition.matrix}, to specify an element $L$ of $\mathscr{L}_{\mgle}(N, d)$ we only need to specify its diagonal blocks $L_{t,t}$.

\begin{proposition}[Weak geodesic convexity of $\FG_{\mgle}$] \label{prop:mgle.Markov}
Let $\bX_0 = \bG^{a_0, L_0}, \bX_1 = \bG^{a_1, L_1} \in \cFG_{\mgle}$, and let $([\hat{\bX}_u])_{u\in[0, 1]} \subset \FP_2$ be a geodesic from $[\bX_0]$ to $[\bX_1]$ constructed as in Proposition \ref{prop:geodesic.convexity}. Then $([\hat{\bX}_u])_{u\in[0, 1]} \subset \FG_{\mgle}$.
\end{proposition}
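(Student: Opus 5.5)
The plan is to verify directly that each interpolant $\hat{\bX}_u = \bG^{\hat{a}_u, \hat{L}_u}$ satisfies the characterization of $\cFG_{\mgle}$ in Proposition \ref{prop:martingale.Markov}(i). Since that characterization consists of linear conditions on the mean and on the Cholesky factor, and the geodesic of Proposition \ref{prop:geodesic.convexity} is built by linear interpolation, the statement reduces to two facts: membership in $\mathscr{L}_{\mgle}(N,d)$ is preserved under the right action of $\mathscr{O}(N,d)$, and the set of admissible pairs $(a, L)$ is a linear subspace.

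The mean is the easy part. Since $a_0$ and $a_1$ both have all temporal blocks equal, the convex combination $\hat{a}_u = (1-u)a_0 + u a_1$ has constant blocks $(\hat{a}_u)_t = (1-u)(a_0)_1 + u(a_1)_1$ as well.

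Next, fix $Q = \diag(Q_1, \ldots, Q_N) \in \mathscr{Q}(L_0, L_1) \subset \mathscr{O}(N,d)$ and put $\hat{L}_1 = L_1 Q$. Blockwise, $(\hat{L}_1)_{s,t} = (L_1)_{s,t} Q_t$. Right multiplication by the block diagonal $Q$ acts on each column block $L_{\cdot,t}$ by the single matrix $Q_t$, and $L_1 \in \mathscr{L}_{\mgle}(N,d)$ means $(L_1)_{t,t} = (L_1)_{t+1,t} = \cdots = (L_1)_{N,t}$. Multiplying all of these on the right by $Q_t$ preserves the equalities, so $\hat{L}_1 \in \mathscr{L}_{\mgle}(N,d)$. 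Block lower triangularity is also preserved, since the zero blocks stay zero.

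Finally, $\mathscr{L}_{\mgle}(N,d)$ is a linear subspace of $\mathscr{L}(N,d)$, because it is cut out by the linear equalities $L_{s,t} = L_{t,t}$ for $s \ge t$ and $L_{s,t} = 0$ for $s < t$. Therefore $\hat{L}_u = (1-u)L_0 + u\hat{L}_1 \in \mathscr{L}_{\mgle}(N,d)$ for every $u \in [0,1]$. Combined with the mean condition, Proposition \ref{prop:martingale.Markov}(i) gives $\hat{\bX}_u \in \cFG_{\mgle}$, and hence $[\hat{\bX}_u] \in \FG_{\mgle}$. The geodesic property itself is already established in Proposition \ref{prop:geodesic.convexity}.

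The only step needing any care is the invariance under $Q$. It relies on $Q$ being block diagonal, so that it acts column-block by column-block, rather than being a general orthogonal matrix. This is exactly the structure of $\mathscr{O}(N,d)$, so I do not expect a genuine obstacle.
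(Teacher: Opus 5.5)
Your proposal is correct and follows the paper's proof, which checks the conditions of Proposition \ref{prop:martingale.Markov}(i) directly: the interpolated mean has equal blocks, and $(\hat{L}_u)_{s,t} = (1-u)(L_0)_{t,t} + u(L_1)_{t,t}Q_t = (\hat{L}_u)_{t,t}$ for $s \geq t$. You make the same computation, only split into two steps: invariance of $\mathscr{L}_{\mgle}(N,d)$ under right multiplication by $\mathscr{O}(N,d)$, and linearity of $\mathscr{L}_{\mgle}(N,d)$.
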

\begin{proof}
Using the notation in Proposition \ref{prop:geodesic.convexity}, write $\hat{\bX}_u = \bG^{\hat{a}_u, \hat{L}_u}$, where $\hat{a}_u = (1 - u) a_0 + ua_1$ and 
\[
\hat{L}_u = L_0 + u (L_1 Q - L_0) = (1 - u) L_0 + u L_1 Q, 
\]
where $Q = \diag(Q_1, \ldots, Q_N) \in \mathscr{Q}(L_0, L_1)$. From Proposition \ref{prop:martingale.Markov}, we have $(\hat{a}_u)_1 = \cdots = (\hat{a}_u)_N$. Moreover, for $s \geq t$, we have
\begin{equation*}
\begin{split}
(\hat{L}_u)_{s, t} &= (1 - u) (L_0)_{s, t} + u (L_1)_{s, t} Q_t \\
  &= (1 - u) (L_0)_{t,t} + u (L_1)_{t,t} Q_t = (\hat{L}_u)_{t, t}.
\end{split}
\end{equation*}
Using Proposition \ref{prop:martingale.Markov} again, we conclude that $\hat{\bX}_u \in \mathcal{FG}_{\mathrm{mgle}}$.
\end{proof}

On the other hand, $\FG_{\Markov}$ does not satisfy the property in Proposition \ref{prop:mgle.Markov} as shown by the following example.

\begin{example}\label{ex:geodesic_not_markov}
Let $N=3$ and $d=1$.
Consider $\bX_0 = \bG^{L_0}$ and $\bX_1 = \bG^{L_1}$ where
\[
L_0=\begin{bmatrix}1&0&0\\0&1&0\\0&0&1\end{bmatrix}\quad \text{and} \quad
L_1=\begin{bmatrix}1&0&0\\1&1&0\\1&1&1\end{bmatrix}.
\]
From Proposition \ref{prop:martingale.Markov}, we have $\bX_0, \bX_1 \in \cFG_{\mgle} \subset \cFG_{\Markov}$: the former is a Gaussian white noise, the latter is a Gaussian random walk. We may verify that $Q := I \in \mathscr{Q}(L_0, L_1)$.  Consider the geodesic $([\bX_u])_{u \in [0, 1]}$ where $\bX_u = \bG^{L_u}$ and
\[
L_u=(1-u)L_0+uL_1=\begin{bmatrix}1&0&0\\u&1&0\\u&u&1\end{bmatrix}.
\]
From Proposition \ref{prop:martingale.Markov}(ii), we see that $\bX_u \notin \cFG_{\Markov}$ for $u \in (0, 1)$. 
\end{example}

For Gaussian martingales, the adapted Wasserstein distance can be expressed in terms of the Bures--Wasserstein distance.

\begin{corollary}[$\AW_2$ between Gaussian martingales] \label{cor:AW2.mgle}
Let $L, M \in \mathscr{L}(N, d)$ be such that $\bG^{L}, \bG^{M} \in \cFG_{\mgle}$. Then
\begin{equation} \label{eqn:AW2.mgle}
\dist_\mathrm{AW}^2(L,M)=\sum_{t=1}^N (N-t+1)\dist_\BW^2(L_{t,t}L_{t,t}^\intercal,M_{t,t}M_{t,t}^\intercal).
\end{equation}
\end{corollary}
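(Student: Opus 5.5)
We compute each term of \eqref{eqn:dist.AW.alternative} column block by column block, using the martingale structure from Proposition \ref{prop:martingale.Markov}(i). Since $\bG^{L}, \bG^{M} \in \cFG_{\mgle}$, for each $t \in [N]$ the $t$-th column block $L_{\cdot,t}$ has $L_{s,t} = 0$ for $s < t$ and $L_{s,t} = L_{t,t}$ for $s \geq t$. The same holds for $M$. Thus $L_{\cdot,t}$ consists of $N - t + 1$ stacked copies of $L_{t,t}$ below $t-1$ zero blocks.

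We then evaluate the three quantities in the $t$-th summand of \eqref{eqn:dist.AW.alternative}. First,
\[
\|L_{\cdot,t}\|_{\mathrm{F}}^2 = \sum_{s \geq t} \|L_{s,t}\|_{\mathrm{F}}^2 = (N-t+1)\|L_{t,t}\|_{\mathrm{F}}^2 = (N-t+1)\tr(L_{t,t}L_{t,t}^{\intercal}),
\]
and similarly for $M$. Second, $(L_{\cdot,t})^{\intercal} M_{\cdot,t} = \sum_{s \geq t} L_{s,t}^{\intercal} M_{s,t} = (N-t+1) L_{t,t}^{\intercal} M_{t,t}$. By homogeneity of the nuclear norm, $\|(L^{\intercal}M)_{t,t}\|_* = (N-t+1)\|L_{t,t}^{\intercal}M_{t,t}\|_*$. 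Hence the $t$-th summand equals
\[
(N-t+1)\big(\|L_{t,t}\|_{\mathrm{F}}^2 + \|M_{t,t}\|_{\mathrm{F}}^2 - 2\|L_{t,t}^{\intercal}M_{t,t}\|_*\big).
\]

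Finally, we apply Proposition \ref{prop:Wasserstein.Cholesky}(i) with $n = d$, $A = L_{t,t}L_{t,t}^{\intercal}$ and $B = M_{t,t}M_{t,t}^{\intercal}$. That result allows arbitrary square factors, which is needed here since $L_{t,t}$ need not be lower triangular. It identifies the bracket as $\dist_{\BW}^2(L_{t,t}L_{t,t}^{\intercal}, M_{t,t}M_{t,t}^{\intercal})$. Summing over $t$ gives \eqref{eqn:AW2.mgle}.

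There is no serious obstacle; the computation is direct. The only point needing care is the column-block bookkeeping: we must check that the zero blocks above the diagonal contribute nothing, and that the factor $N-t+1$ counts the rows $s = t, \ldots, N$.
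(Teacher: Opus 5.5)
Your proposal is correct and follows essentially the same route as the paper. Like the paper, you expand $\dist_{\mathrm{AW}}^2$ column block by column block via \eqref{eqn:dist.AW.alternative}, use Proposition \ref{prop:martingale.Markov}(i) to reduce each column to $N-t+1$ copies of $L_{t,t}$ (resp.\ $M_{t,t}$), and identify each bracket with $\dist_{\BW}^2$ via Proposition \ref{prop:Wasserstein.Cholesky}(i), whose allowance of non-triangular square factors is exactly what you correctly flag as needed.
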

\begin{proof}
By \eqref{eqn:dist.AW.alternative}, we have
\begin{equation*}
\begin{split}
\dist_{\mathrm{AW}}^2(L, M) = \sum_{t = 1}^N \left( \|L_{\cdot, t}\|_{\mathrm{F}}^2 + \|M_{\cdot, t}\|_{\mathrm{F}}^2 - 2 \| (L_{\cdot,t})^{\intercal} M_{\cdot, t} \|_*\right).
\end{split}
\end{equation*}
By Proposition \ref{prop:martingale.Markov}(i), for each $t$ we have $L_{t,t} = \cdots = L_{N,t}$ and $M_{t,t} = \cdots = M_{N, t}$. This allows us to simplify the above to get
\begin{equation*}
\begin{split}
\dist_{\mathrm{AW}}^2(L, M) &= \sum_{t = 1}^N (N - t + 1) \left( \|L_{t,t}\|_{\mathrm{F}}^2 + \|M_{t,t}\|_{\mathrm{F}}^2 - 2\| (L_{t,t})^{\intercal} M_{t,t} \|_* \right) \\
&= \sum_{t = 1}^N (N - t + 1) \dist_{\mathrm{BW}}^2 (L_{t,t} L_{t,t}^{\intercal}, M_{t,t} M_{t,t}^{\intercal}),
\end{split}
\end{equation*}
where in the last equality we apply Proposition \ref{prop:Wasserstein.Cholesky}.
\end{proof}

Next, we use the Procrustes representation (Theorem \ref{thm:Procrustes}) of $\dist_{\mathrm{AW}}$ to study a basic projection problem in $(\mathscr{L}(N, d), \dist_{\mathrm{AW}})$. Given a non-empty set $\mathscr{K} \subset \mathscr{L}(N, d)$ and $L \in \mathscr{L}(N, d)$, consider
\begin{equation} \label{eqn:projection}
\inf_{M \in \mathscr{K}} \dist_{\mathrm{AW}}(L, M).
\end{equation}

\begin{assumption} \label{ass:projection}
The set $\mathscr{K}$ satisfies the following conditions:
\begin{itemize}
\item[(i)] $\mathscr{K}$ is {\it orthogonally right-invariant} in the sense that
\[
\mathscr{K} = \mathscr{K} \mathscr{O}(N, d) := \{K Q: Q \in \mathscr{O}(N, d)\}.
\]
\item[(ii)] $\mathscr{K}$ is closed with respect to the Frobenius norm.
\item[(iii)] $\mathscr{K}$ is convex in the usual sense.
\end{itemize}
\end{assumption}

Some remarks are in order. By Proposition \ref{prop:dist.AW.degeneracy}, orthogonal right-invariance means that $\mathscr{K}$ contains all equivalence classes of elements of $\mathscr{K}$, that is, if $L \in \mathscr{K}$ then $[L] \subset \mathscr{K}$. This ensures that \eqref{eqn:projection} is equivalent to the projection problem in the space $\tilde{\mathscr{L}}(N, d) = \mathscr{L}(N, d) / \sim$ of equivalence classes. Since convergence in Frobenius norm is equivalent to componentwise convergence of matrix entries, (ii) is straightforward to check in practice. From \eqref{eqn:Frobenius.bound}, this implies closedness with respect to $\dist_{\mathrm{AW}}$. Finally, convexity is needed for the projection to be well-posed. 

\begin{proposition}[Projection] \label{prop:right.invariance}
Suppose that $\mathscr{K} \subset \mathscr{L}(N, d)$ is nonempty and satisfies Assumption \ref{ass:projection}. Then, for any $L \in \mathscr{L}(N, d)$ there exists $\hat{L} \in \mathscr{K}$ such that
\begin{equation} \label{eqn:projection2}
\dist_{\mathrm{AW}}(L, \hat{L}) = \inf_{M \in \mathscr{K}} \dist_{\mathrm{AW}}(L, M).
\end{equation}
In fact, we have
\begin{equation} \label{eqn:projection.argmin}
\argmin_{M \in \mathscr{K}} \dist_{\mathrm{AW}}(L, M) = \left[ \argmin_{M \in \mathscr{K}} \|L - M\|_{\mathrm{F}} \right],
\end{equation}
where $[\cdot]$ denotes the equivalence class in $\dist_{\mathrm{AW}}$. In particular, the solution in $\tilde{\mathscr{L}}(N, d)$ is unique.
\end{proposition}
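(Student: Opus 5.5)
The plan is to reduce the $\dist_{\mathrm{AW}}$-projection to an ordinary Frobenius projection onto the closed convex set $\mathscr{K}$, using the Procrustes representation (Theorem \ref{thm:Procrustes}) together with right-invariance.

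\emph{Step 1 (reduction).} For any $M \in \mathscr{K}$, Theorem \ref{thm:Procrustes} gives
\[
\dist_{\mathrm{AW}}(L, M) = \min_{Q \in \mathscr{O}(N,d)} \|L - MQ\|_{\mathrm{F}}.
\]
By Assumption \ref{ass:projection}(i), $MQ \in \mathscr{K}$, so $\dist_{\mathrm{AW}}(L,M) \geq \inf_{K \in \mathscr{K}} \|L - K\|_{\mathrm{F}}$. Conversely, \eqref{eqn:Frobenius.bound} gives $\dist_{\mathrm{AW}}(L,M) \le \|L-M\|_{\mathrm{F}}$. Taking infima over $M \in \mathscr{K}$ in both directions yields
\[
\inf_{M\in\mathscr{K}} \dist_{\mathrm{AW}}(L,M) = \inf_{M\in\mathscr{K}} \|L-M\|_{\mathrm{F}}.
\]

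\emph{Step 2 (existence and uniqueness in Frobenius).} $\mathscr{K}$ is a nonempty, closed, convex subset of the finite-dimensional Hilbert space $(\mathscr{L}(N,d), \langle\cdot,\cdot\rangle_{\mathrm{F}})$. By the Hilbert projection theorem there is a unique Frobenius minimizer $\hat L \in \mathscr{K}$. By Step 1, $\hat L$ attains the $\dist_{\mathrm{AW}}$ infimum, which proves \eqref{eqn:projection2}.

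\emph{Step 3 (the argmin identity \eqref{eqn:projection.argmin}).}

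For ``$\supseteq$'': if $M \in [\hat L]$, then by Proposition \ref{prop:dist.AW.degeneracy} and the triangle inequality, $\dist_{\mathrm{AW}}(L,M) = \dist_{\mathrm{AW}}(L,\hat L)$, which is optimal. Moreover $M = \hat L Q \in \mathscr{K}$ by right-invariance.

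For ``$\subseteq$'': let $M \in \mathscr{K}$ be a $\dist_{\mathrm{AW}}$-minimizer. Pick $Q \in \mathscr{Q}(L,M)$, so that $\|L - MQ\|_{\mathrm{F}} = \dist_{\mathrm{AW}}(L,M)$, which equals the Frobenius infimum. Since $MQ \in \mathscr{K}$, it is a Frobenius minimizer, and uniqueness forces $MQ = \hat L$. Hence $M = \hat L Q^{\intercal} \in [\hat L]$. It follows that the solution set in $\tilde{\mathscr{L}}(N,d)$ is the single class $[\hat L]$.

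The only delicate point is Step 3's ``$\subseteq$''. It works because right-invariance lets us replace an arbitrary $\dist_{\mathrm{AW}}$-minimizer by its Procrustes-aligned version, and strict convexity of the Frobenius norm then pins that version down uniquely. Note also that the Frobenius argmin is a single matrix, so its equivalence class is exactly the right-hand side of \eqref{eqn:projection.argmin}.
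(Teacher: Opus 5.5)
Your proposal is correct and follows the paper's route. Both arguments use the Procrustes representation together with right-invariance to turn the $\dist_{\mathrm{AW}}$-projection into a Frobenius projection onto the closed convex set $\mathscr{K}$, and then invoke the Hilbert projection theorem. Your Step 3 spells out the argument behind the paper's one-line claim that the optimizer set is exactly $[\hat L]$: Procrustes-align an arbitrary minimizer inside $\mathscr{K}$, then apply uniqueness of the Frobenius projection.
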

\begin{proof}
By Theorem \ref{thm:Procrustes} and right-invariance, we have
\begin{equation} \label{eqn:projection.proof}
\begin{split}
\inf_{M \in \mathscr{K}} \dist_{\mathrm{AW}}(L, M) &= \inf_{M \in \mathscr{K}} \inf_{Q \in \mathscr{O}(N, d)} \|L - MQ\|_{\mathrm{F}} = \inf_{\tilde{M} \in \mathscr{K}} \|L - \tilde{M}\|_{\mathrm{F}}.
\end{split}
\end{equation}
From Assumption \ref{ass:projection} and the Hilbertian nature of $\|\cdot\|_{\mathrm{F}}$, there exists a unique $\tilde{L} \in \mathscr{K}$ such that 
\[
\|L - \tilde{L}\|_{\mathrm{F}} = \inf_{\tilde{M} \in \mathscr{K}} \|L - \tilde{M}\|_{\mathrm{F}}.
\]
From \eqref{eqn:projection.proof}, we have
\[
\dist_{\mathrm{AW}}(L, \tilde{L}) \leq \|L - \tilde{L}\|_{\mathrm{F}} = \inf_{M \in \mathscr{K}} \dist_{\mathrm{AW}} (L, M).
\]
It follows that $\tilde{L}$ is optimal for \eqref{eqn:projection2}. By right-invariance and Proposition \ref{prop:dist.AW.degeneracy}, the set of optimizers is precisely $[\tilde{L}]$, the equivalence class of $\tilde L$.
\end{proof}

In the following, we specialize to the case $\mathscr{K} = \mathscr{L}_{\mgle}(N, d)$, and show that the Gaussian martingale projection has a simple explicit expression. 

\begin{corollary}[Martingale projection]\label{cor:mgle.proj}
Let $L \in \mathscr{L}(N, d)$. Then $\hat{L} \in \mathscr{L}_{\mgle}(N, d)$ is optimal for
\[
\inf_{M \in \mathscr{L}_{\mgle}(N, d)} \dist_{\mathrm{AW}}(L, M)
\]
if and only if it has the form
\[
\hat{L}_{t,t} = \left( \frac{1}{N - t + 1} \sum_{s = t}^N L_{s,t}\right) Q_t
\]
where $Q = \diag(Q_1, \ldots, Q_N) \in \mathscr{O}(N, d)$.
\end{corollary}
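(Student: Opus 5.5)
The plan is to apply Proposition \ref{prop:right.invariance} with $\mathscr{K} = \mathscr{L}_{\mgle}(N, d)$, and then solve the resulting Frobenius projection explicitly.

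\textbf{Step 1: check Assumption \ref{ass:projection}.} By Proposition \ref{prop:martingale.Markov}(i), $\mathscr{L}_{\mgle}(N,d)$ is the set of $M \in \mathscr{L}(N,d)$ with $M_{s,t} = M_{t,t}$ for all $s \ge t$. This is a linear subspace of $\mathscr{L}(N,d)$, hence it is convex and Frobenius-closed. For right-invariance, take $Q \in \mathscr{O}(N,d)$. Then $(MQ)_{s,t} = M_{s,t}Q_t = M_{t,t}Q_t = (MQ)_{t,t}$, so $MQ$ is again in $\mathscr{K}$. Since $Q$ is invertible, this gives $\mathscr{K}\mathscr{O}(N,d) = \mathscr{K}$.

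\textbf{Step 2: reduce to a Frobenius projection.} By \eqref{eqn:projection.argmin}, the optimizers are exactly the equivalence class $[\tilde L]$, where $\tilde L$ is the unique Frobenius projection of $L$ onto $\mathscr{K}$.

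\textbf{Step 3: compute $\tilde L$.} Parametrize $M \in \mathscr{K}$ by its diagonal blocks $D_t = M_{t,t}$. The objective decouples by column blocks:
\[
\|L - M\|_{\mathrm{F}}^2 = \sum_{t=1}^N \sum_{s=t}^N \|L_{s,t} - D_t\|_{\mathrm{F}}^2 .
\]
The strictly upper blocks of both matrices vanish, so they contribute nothing. Each inner sum is a least-squares problem in $D_t$, minimized uniquely by the average
\[
D_t = \frac{1}{N-t+1}\sum_{s=t}^N L_{s,t}.
\]
So $\tilde L$ is the martingale factor with these diagonal blocks, that is, $\tilde L_{s,t} = D_t$ for $s \ge t$.

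\textbf{Step 4: identify the equivalence class.} By Proposition \ref{prop:dist.AW.degeneracy}, $[\tilde L] = \{\tilde L Q : Q \in \mathscr{O}(N,d)\}$. For a martingale factor, $\tilde L Q$ is determined by its diagonal blocks $D_tQ_t$. Hence $\hat L$ is optimal if and only if $\hat L \in \mathscr{L}_{\mgle}(N,d)$ and $\hat L_{t,t} = D_t Q_t$ for some $Q \in \mathscr{O}(N,d)$, which is the stated form.

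The only point needing care is Step 1: right-invariance must hold with equality of sets, and this follows from invertibility of $Q$. Everything else is routine.
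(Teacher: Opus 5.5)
Your proposal is correct and follows the paper's proof essentially step for step. You verify Assumption~\ref{ass:projection} via Proposition~\ref{prop:martingale.Markov}, reduce to the Frobenius projection via Proposition~\ref{prop:right.invariance}, solve it by column-block averaging, and read off the equivalence class. Your Step~4 also makes explicit the identification of $[\tilde L]$ with the martingale factors whose diagonal blocks are $D_tQ_t$, which the paper leaves implicit.
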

\begin{proof}
From Proposition \ref{prop:martingale.Markov}, we see that $\mathscr{L}_{\mgle}(N, d)$ satisfies Assumption \ref{ass:projection}, and hence Proposition \ref{prop:right.invariance} applies. For $\tilde{L} \in \mathscr{L}_{\mgle}(N, d)$, we have
\begin{equation}
\|L - \tilde{L}\|_{\mathrm{F}}^2 = \sum_{t = 1}^N \sum_{s = t}^N \|L_{s,t} - \tilde{L}_{t,t}\|_{\mathrm{F}}^2.
\end{equation}
To minimize this quantity, we pick
\[
\tilde{L}_{t,t} = \frac{1}{N - t + 1} \sum_{s = t}^N L_{s,t},
\]
and the rest follows from \eqref{eqn:projection.argmin}.
\end{proof}

On the other hand, as the following example shows, $\mathscr{L}_{\mathrm{Markov}}$ is not closed in Frobenius norm.

\begin{example}\label{ex:markov.closed}
Let $N = 2$. For $n \geq 1$, let
\[
L^{(n)}=\begin{bmatrix}
    \frac{1}{n}I & 0\\
    I & I
\end{bmatrix} \in \mathscr{L}_{\mathrm{Markov}}(N, d).
\]
We have
\[
L^{(n)} \rightarrow L = \begin{bmatrix} 0 & 0\\
I & I
\end{bmatrix}
\]
in Frobenius norm but $L \notin \mathscr{L}_{\mathrm{Markov}}(N, d)$.
\end{example}


\begin{remark}
Corollary \ref{cor:mgle.proj} for Gaussian martingales can be generalized to a class of Gaussian Markov processes that we term common-dynamics Markov processes. Let $\Phi = (\Phi_t)_{t = 1}^{N-1}$ be a fixed sequence in $\bR^{d \times d}$. Let
\[
\mathscr{L}_{\Phi}(N, d) = \{ L \in \mathscr{L}(N, d): L_{t+1,s} = \Phi_t L_{t,s}, \quad t \in [N-1], s \leq t\}
\]
Probabilistically, this means that $X_{t+1} = \Phi_t X_t + L_{t+1,t+1} \epsilon_{t+1}$ is an autoregressive process. Here, the transition matrices $\Phi_t$ are fixed but the volatility of the innovation is allowed to vary. Letting $\Phi_t = I$ for all $t$ recovers $\mathscr{L}_{\mgle}(N, d)$. It is straightforward to check that $\mathscr{L}_{\Phi}(N, d)$ satisfies Assumption \ref{ass:projection} and the projection can be explicitly computed.
\end{remark}

\section{Adapted Brenier coupling} \label{sec:adapted.Brenier}
For general filtered processes or distributions, the adapted optimal transport problem \eqref{eqn:AW2} (or \eqref{eqn:AW2.laws}) is difficult to solve. Beyond the Gaussian setting considered here, only few explicit solutions (and expressions of the optimal transport cost) have been found. Thus, it is useful to have ``off-the-shelf'' bicausal couplings that can be readily applied, even if they may be suboptimal. For example, the Knothe--Rosenblatt coupling defines a bicausal coupling between laws of univariate processes, and is $\AW_2$-optimal under suitable conditions on the marginals \cite{BBYZ2017, BPP23}. In this section, we study the {\it adapted Brenier coupling} which extends the Knothe--Rosenblatt coupling to multivariate processes.

We first define the adapted Brenier coupling between laws of stochastic processes. This definition has appeared in \cite[page 5]{BPP23}.\footnote{The authors of \cite{BPP23} also considered another generalization of the Knothe--Rosenblatt coupling based on the quantile process. We leave this to further study.} Then, we tailor the definition to filtered Gaussian processes. 

\begin{definition}[Adapted Brenier coupling between elements of $\mathcal{P}_2(\R^{Nd})$] \label{def:adapted.Brenier}
Let $\mu, \nu \in \cP_2(\bR^{Nd})$ be laws of $d$-dimensional stochastic processes. We say that $\pi \in \Cpl(\mu, \nu)$ is an adapted Brenier coupling of $(\mu, \nu)$ if for all $t \in [N]$ and ($\pi$-almost) all $x_{1:(t-1)}, y_{1:(t-1)} \in \bR^{(t-1)d}$ (empty when $t = 1$), the conditional distribution
\begin{equation} \label{eqn:conditional.coupling}
\pi_{t}(\dd x_{t}, \dd y_{t} \mid x_{1:(t-1)}, y_{1:(t-1)})
\end{equation}
is a Brenier coupling between the conditional marginals $\mu_{t}(\dd x_{t} \mid x_{1:(t-1)})$ and $\nu_{t}(\dd y_{t} \mid y_{1:(t-1)})$.
\end{definition}

By \cite[Proposition 5.1]{BBYZ2017}, any adapted Brenier coupling is bicausal. When $d = 1$, the comonotonic coupling between $\mu_{t}(\dd x_{t} \mid x_{1:(t-1)})$ and $\nu_{t}(\dd y_{t} \mid y_{1:(t-1)})$ is a Brenier coupling between the conditional marginals. With this choice, the adapted Brenier coupling coincides with the classical Knothe--Rosenblatt coupling.


Now let $\bX = \bG^{a, L}$ and $\bY = \bG^{b, M}$ be filtered Gaussian processes. If we only consider the distributions of the processes $X = a + L\epsilon^X$ and $Y = b + M \epsilon^Y$, namely $\mu = \cN_{Nd}(a, A = LL^{\intercal})$ and $\nu = \cN_{Nd}(b, B = MM^{\intercal})$, we may construct an adapted Brenier coupling, according to Definition \ref{def:adapted.Brenier}, by optimally coupling the conditional marginals which are Gaussian. Specifically, the conditional marginals can be expressed in terms of the chronological inverses of the minimal Cholesky factors of $A$ and $B$. 

On the other hand, a bicausal coupling between $\bX$ and $\bY$ is a coupling between the noises $\epsilon^X$ and $\epsilon^Y$. Coupling $\epsilon_{t}^X$ and $\epsilon_{t}^Y$ (conditioned on $(\epsilon_{1:(t-1)}^X, \epsilon_{1:(t-1)}^Y)$) instead of coupling $X_{t}$ and $Y_{t}$ (conditioned on $(X_{1:(t-1)}, Y_{1:(t-1)})$) leads to Definition \ref{def:adapted.Brenier.FG} below. Given $\bX = \bG^{a, L}$, we let
\[
\hat{X}_{t} = \hat{X}_{t}(\omega_{1:(t-1)}^{\bX}) := a_{t} + \sum_{s \leq t-1} L_{t,s} \omega_s^{\bX}
\]
be the conditional expectation of $X_{t}$ given $\epsilon_{1:(t-1)}^X = \omega_{1:(t-1)}^{\bX}$. It follows that $X_t = \hat{X}_{t} + L_{t,t} \omega_t^{\bX}$. We define $\hat{Y}_t = \hat{Y}_{t}(\omega_{1:(t-1)}^{\bY})$ similarly. When $\omega_{1:(t-1)}^{\bX}, \omega_{1:(t-1)}^{\bY} \in \bR^{(t-1)d}$ are given, we may regard $\hat{X}_t, \hat{Y}_t$ as constants.

\begin{definition} [Adapted Brenier coupling between elements of $\cFG$] \label{def:adapted.Brenier.FG}
Let $\bX = \bG^{a, L}$ and $\bY = \bG^{b, M}$ be elements of $\cFG$. An adapted Brenier coupling between $\bX$ and $\bY$ is an element $\pi \in \Cpl_{\mathrm{bc}}(\bX, \bY)$ such that for all $t \in [N]$ and ($\pi$-almost) all $\omega_{1:(t-1)}^{\bX}, \omega_{1:(t-1)}^{\bY} \in \bR^{(t-1)d}$, the conditional distribution
\begin{equation} \label{eqn:adapted.Brenier.conditional.distribution}
\pi_{t}(\dd \omega_{t}^{\bX}, \dd \omega_{t}^{\bY} \mid \omega^{\bX}_{1:(t-1)}, \omega^{\bY}_{1:(t-1)})
\end{equation}
of $(\epsilon_{t}^{X}, \epsilon_{t}^{Y})$ given $(\epsilon_{1:(t-1)}^X, \epsilon_{1:(t-1)}^Y) = (\omega^{\bX}_{1:(t-1)}, \omega^{\bY}_{1:(t-1)})$ solves the optimal transport problem
\begin{equation}\label{eqn:one.step.AB}
\inf_{\gamma \in \Cpl(\cN_d(0, I), \cN_d(0, I))} \int \| (\hat{X}_{t} + L_{t,t}\omega_{t}^{\bX}) - (\hat{Y}_{t} + M_{t,t}\omega_{t}^{\bY}) \|_2^2  \gamma(\dd \omega_{t}^{\bX}, \dd \omega_{t}^{\bY}).
\end{equation}
\end{definition}

Comparing the definitions of the adapted Brenier coupling and the $\AW_2$-optimal coupling (Definition \ref{def:AW2}), we see that the former asks for one-step optimality while the latter optimizes a global cost. Also, note that letting $L$ and $M$ be minimal Cholesky factors recovers the adapted Brenier coupling between Gaussian distributions in the sense of Definition \ref{def:adapted.Brenier}.

We proceed to characterize adapted Brenier couplings between filtered Gaussian processes. For readability, in the following statement we restrict to Gaussian bicausal couplings. With some more work, it can be shown that the minimal transport cost in (ii) is unchanged even if non-Gaussian adapted Brenier couplings are allowed.

\begin{theorem}[Adapted Brenier transport] \label{thm:adapted.Brenier}
Let $\bX = \bG^{a, L}$ and $\bY = \bG^{b, M}$ be elements of $\cFG$. Let $L_{t,t}^{\intercal} M_{t,t}$ have singular value decomposition $U_t \Sigma_t V_t^{\intercal}$. 
\begin{itemize}
\item[(i)] A Gaussian bicausal coupling $\pi^P$ of the form \eqref{eqn:joint.Gaussian}, where $P = \diag(P_1, \ldots, P_N)$ with $\|P_t\|_{2 \rightarrow 2} \leq 1$, is an adapted Brenier coupling of $(\bX, \bY)$ if and only if
\begin{equation} \label{eqn:adapted.Brenier.characterize} 
P_t \in \mathscr{P}( L_{t,t}^{\intercal} M_{t,t}), \quad t \in [N].
\end{equation}
Its transport cost is given by
\begin{equation} \label{eqn:adapted.Brenier.cost}
\bE_{\pi^P}[\|X - Y\|_2^2] = \|a- b\|_2^2 + \|L\|_{\mathrm{F}}^2 + \|M\|_{\mathrm{F}}^2 - 2 \sum_{t = 1}^N \tr ( (L^{\intercal} M)_{t,t} P_t ).
\end{equation}
When $L_{t,t}$ and $M_{t,t}$ are non-degenerate, the optimal $P$ is uniquely given by $P_t = V_t U_t^{\intercal}$.
\item[(ii)]  Write $U_t = \begin{bmatrix}U_{t,1} & U_{t,0}\end{bmatrix}$ and $V_t = \begin{bmatrix}V_{t,1} & V_{t,0}\end{bmatrix}$ as in Proposition \ref{prop:trace}(ii). The minimal transport cost
\begin{equation} \label{eqn:Brenier.minimal.cost}
\min_{\substack{P=\diag(P_1,\ldots,P_N)\\ P_t \in \mathscr{P}(L_{t,t}^{\intercal}M_{t,t})}}
\bE_{\pi^P}[\|X-Y\|_2^2]
\end{equation}
among all adapted Brenier couplings of the form $\pi^P$ is given by
\[
\|a - b\|_2^2 + \cD_{\mathrm{AB}}(L, M),
\]
where $\cD_{\mathrm{AB}}$ is the adapted Brenier divergence\footnote{We call this a divergence, rather than (squared) distance, because when $d \geq 2$, $\cD_{\mathrm{AB}}^{1/2}$ is not a (pseudo-)metric as can be shown by numerical examples.
} between $L$ and $M$ defined by
\begin{equation} \label{eqn:optimal.AB.cost}
\begin{split}
\cD_{\mathrm{AB}}(L, M) &:= \|L\|_{\mathrm{F}}^2 + \|M\|_{\mathrm{F}}^2 - 2 \sum_{t = 1}^N \Gamma_t(L, M), \quad \text{where}\\
\Gamma_t(L, M) &:= \max_{P_t \in \mathscr{P}( L_{t,t}^{\intercal} M_{t,t})} \tr \left( (L^{\intercal}M)_{t,t} P_t \right) \\
  &= \tr \left( U_{t,1}^{\intercal} (L^{\intercal}M)_{t,t} V_{t,1} \right) + \| U_{t,0}^{\intercal} (L^{\intercal} M)_{t,t} V_{t,0} \|_*.
\end{split}
\end{equation}
\item[(iii)] The adapted Brenier divergence admits the Procrustes representation
\begin{equation} \label{eqn:adapted.Brenier.Procrustes}
\cD_{\mathrm{AB}}(L, M) = \min_{Q \in \mathscr{O}(N, d): Q_t \in \mathscr{P}( L_{t,t}^{\intercal} M_{t,t})} \|L - MQ\|_{\mathrm{F}}^2.
\end{equation}
\end{itemize}
\end{theorem}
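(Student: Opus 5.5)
Under $\pi^P$ the pairs $(\epsilon_t^X,\epsilon_t^Y)$ are independent across $t$, so the plan is to reduce the definition of an adapted Brenier coupling to a one-step condition on each $P_t$ and then read off (ii) and (iii) from the explicit form of $\mathscr{P}(\cdot)$ in Proposition \ref{prop:trace}.

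For (i), independence across $t$ means the conditional law \eqref{eqn:adapted.Brenier.conditional.distribution} of $(\epsilon_t^X,\epsilon_t^Y)$ given the past is the fixed Gaussian $\cN_{2d}(0,\begin{bmatrix} I & P_t\\ P_t^{\intercal} & I\end{bmatrix})$. This law does not depend on $(\omega^{\bX}_{1:(t-1)},\omega^{\bY}_{1:(t-1)})$. For any $\gamma\in\Cpl(\cN_d(0,I),\cN_d(0,I))$ with cross-covariance $\bE_\gamma[\omega_t^{\bY}(\omega_t^{\bX})^{\intercal}]=R$, expanding the one-step cost \eqref{eqn:one.step.AB} and using that both noises have mean zero gives
\[
\|\hat X_t-\hat Y_t\|_2^2+\|L_{t,t}\|_{\mathrm F}^2+\|M_{t,t}\|_{\mathrm F}^2-2\tr(L_{t,t}^{\intercal}M_{t,t}R).
\]
The conditional means $\hat X_t,\hat Y_t$ only shift the cost by a constant. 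Every contraction $R$ is realised by a Gaussian coupling, and conversely every cross-covariance of a coupling of standard Gaussians is a contraction. Hence, by Proposition \ref{prop:trace}, $\gamma$ is optimal if and only if $R\in\mathscr{P}(L_{t,t}^{\intercal}M_{t,t})$. Applying this with $\gamma=\pi_t$, whose cross-covariance is $P_t$, shows that $\pi^P$ is adapted Brenier if and only if \eqref{eqn:adapted.Brenier.characterize} holds for every $t$, simultaneously for almost every history. The cost formula \eqref{eqn:adapted.Brenier.cost} is just \eqref{eqn:pi.P.cost}. When $L_{t,t},M_{t,t}$ are invertible, $L_{t,t}^{\intercal}M_{t,t}$ is invertible, and Remark \ref{rmk:trace} gives uniqueness $P_t=V_tU_t^{\intercal}$.

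For (ii), minimising the cost over admissible $P$ decouples in $t$ into maximising $\tr((L^{\intercal}M)_{t,t}P_t)$ over $P_t=V_{t,1}U_{t,1}^{\intercal}+V_{t,0}KU_{t,0}^{\intercal}$ with $\|K\|_{2\to2}\le1$. Cyclicity of the trace gives
\[
\tr((L^{\intercal}M)_{t,t}P_t)=\tr\bigl(U_{t,1}^{\intercal}(L^{\intercal}M)_{t,t}V_{t,1}\bigr)+\tr\bigl(U_{t,0}^{\intercal}(L^{\intercal}M)_{t,t}V_{t,0}K\bigr).
\]
Applying Proposition \ref{prop:trace}(i) in dimension $d-r_t$ to the second term, the maximum over contractions $K$ is $\|U_{t,0}^{\intercal}(L^{\intercal}M)_{t,t}V_{t,0}\|_*$ and is attained. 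This yields $\Gamma_t$, and hence $\cD_{\mathrm{AB}}$.

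For (iii), for $Q\in\mathscr{O}(N,d)$ the same expansion as in Theorem \ref{thm:Procrustes} gives $\|L-MQ\|_{\mathrm F}^2=\|L\|_{\mathrm F}^2+\|M\|_{\mathrm F}^2-2\sum_t\tr((L^{\intercal}M)_{t,t}Q_t)$. The right side of \eqref{eqn:adapted.Brenier.Procrustes} is therefore at least $\cD_{\mathrm{AB}}$, since it restricts to orthogonal $Q_t$. For equality, the maximiser $K$ in (ii) must be chosen orthogonal. The maximum of $\tr(CK)$ over contractions is attained at the orthogonal matrix $K=V'U'^{\intercal}$ built from an SVD $U'\Sigma'V'^{\intercal}$ of $C=U_{t,0}^{\intercal}(L^{\intercal}M)_{t,t}V_{t,0}$ (Remark \ref{rmk:trace}). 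Then $P_t=V_{t,1}U_{t,1}^{\intercal}+V_{t,0}KU_{t,0}^{\intercal}$ is orthogonal, because the columns of $[V_{t,1}\ V_{t,0}]$ and $[U_{t,1}\ U_{t,0}]$ are orthonormal and $K$ is orthogonal. This attains $\Gamma_t$.

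The main obstacle is the justification in (i) that optimality of the one-step problem depends only on the cross-covariance and that Proposition \ref{prop:trace} characterises all optimisers. This requires checking that the conditional means enter only additively, and the matching between contractions and Gaussian couplings. The rest is bookkeeping.
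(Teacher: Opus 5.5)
Your proposal is correct and follows essentially the same route as the paper: you reduce (i) to the one-step problem, whose value depends only on the cross-covariance (the paper cites Proposition~\ref{prop:Wasserstein.Cholesky} here, while you spell out the correspondence between contractions and Gaussian couplings directly). You then parametrize $\mathscr{P}(L_{t,t}^{\intercal}M_{t,t})$ via Proposition~\ref{prop:trace}(ii) and maximize over $K_t$ for (ii), and choose $K_t$ orthogonal for (iii), with your lower-bound-plus-attainment argument and the check that $V_t\,\diag(I,K)\,U_t^{\intercal}$ is orthogonal filling in details the paper leaves implicit.
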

\begin{proof}
(i) Let $t$ and $\omega_{1:(t-1)}^{\bX}, \omega_{1:(t-1)}^{\bY} \in \bR^{(t-1)d}$ be given. The one-step optimal transport problem \eqref{eqn:one.step.AB} is equivalent to minimizing
\[
\|\hat{X}_t(\omega_{1:(t-1)}^{\bX}) - \hat{Y}_t(\omega_{1:(t-1)}^{\bY})\|_2^2 + \bE_{\gamma}[ \|L_{t,t} \epsilon_t^X - M_{t,t} \epsilon_t^Y \|_2^2],
\]
where $(\epsilon_t^X, \epsilon_t^Y) \sim \gamma$, over $\gamma \in \Cpl( \cN_d(0, I), \cN_d(0, I))$. By Proposition \ref{prop:Wasserstein.Cholesky}, $\gamma$ is optimal if and only if the (conditional) correlation matrix $\bE_{\gamma} [ \epsilon_t^Y (\epsilon_t^X)^{\intercal}]$ is an element of $\mathscr{P}( L_{t,t}^{\intercal} M_{t,t})$. Note that if $\pi = \pi^P$ then the conditional distribution \eqref{eqn:adapted.Brenier.conditional.distribution}, given by \eqref{eqn:pi.P.conditional}, has correlation matrix $P_t$. Hence $\pi^P$ is an adapted Brenier coupling if and only if \eqref{eqn:adapted.Brenier.characterize} holds. The formula \eqref{eqn:adapted.Brenier.cost} follows from \eqref{eqn:pi.P.cost}, and the form of $\mathscr{P}( L_{t,t}^{\intercal} M_{t,t})$ in the non-degenerate case follows from Proposition \ref{prop:trace}(ii).

(ii) Let $\pi^P$ be an adapted Brenier coupling. By Proposition \ref{prop:trace}(ii), for each $t$ there exists $K_t$ with $\|K_t\|_{2 \rightarrow 2} \leq 1$, such that
\[
P_t = V_{t,1} U_{t,1}^{\intercal} + V_{t,0} K_t U_{t,0}^{\intercal}.
\]
Plugging this into \eqref{eqn:adapted.Brenier.cost}, the transport cost is given by
\begin{equation} \label{eqn:transport.cost.given.K}
\begin{split}
&\|a - b\|_2^2 + \|L\|_{\mathrm{F}}^2 + \|M\|_{\mathrm{F}}^2\\
&- 2\sum_{t = 1}^N \left(\tr ((L^{\intercal}M)_{t,t} V_{t,1} U_{t,1}^{\intercal}) + \tr ( (L^{\intercal} M)_{t,t} V_{t,0} K_t U_{t,0}^{\intercal})\right).
\end{split}
\end{equation}
By invariance of the trace under cyclic permutation of the product, the first trace term is equal to
\[
\tr \left( U_{t,1}^{\intercal} (L^{\intercal}M)_{t,t} V_{t,1} \right),
\]
and the last trace term can be rewritten as
\[
\tr \left(   (U_{t,0}^{\intercal} (L^{\intercal} M)_{t,t} V_{t,0}) K_t \right).
\]
By Proposition \ref{prop:trace} again, this trace is maximized if and only if
\begin{equation} \label{eqn:optimal.Kt}
K_t \in \mathscr{P} \left( U_{t,0}^{\intercal} (L^{\intercal} M)_{t,t} V_{t,0} \right), \quad t \in [N],
\end{equation}
and the minimal transport cost is given by \eqref{eqn:optimal.AB.cost}.

(iii) This follows since the optimal $K_t$ in \eqref{eqn:optimal.Kt} can always be chosen to be orthogonal (see Remark \ref{rmk:trace}).
\end{proof}

From Theorem \ref{thm:adapted.Brenier}(i), there exists an adapted Brenier coupling that solves $\AW_2(\bX, \bY)$ if and only if
\begin{equation} \label{eqn:AB.optimal.condition}
\mathscr{P}\big(L_{t,t}^\intercal M_{t,t}\big) \cap \mathscr{P}\big((L^\intercal M)_{t,t}\big) \neq \emptyset \quad \text{for all } t \in [N].
\end{equation}
This is the case if $\bX, \bY \in \cFG_{\mgle}$.

\begin{proposition}
If $\bX = \bG^{a, L}, \bY  = \bG^{b, M}\in \cFG_{\mgle}$, then any Brenier coupling is optimal for $\AW_2(\bX, \bY)$.
\end{proposition}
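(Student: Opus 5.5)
For martingales the block $(L^{\intercal}M)_{t,t}$ is a positive multiple of $L_{t,t}^{\intercal}M_{t,t}$. Consequently the set of one-step (adapted Brenier) optimizers and the set of global $\AW_2$ optimizers should coincide. I read "any Brenier coupling" as any adapted Brenier coupling of the form $\pi^P$ in Theorem \ref{thm:adapted.Brenier}. By Theorem \ref{thm:adapted.Brenier}(i), such a coupling is characterized by $P_t \in \mathscr{P}(L_{t,t}^{\intercal}M_{t,t})$ for all $t$. By Theorem \ref{thm:AW.filtered.Gaussians}(ii), $\pi^P$ is $\AW_2$-optimal if and only if $P_t \in \mathscr{P}((L^{\intercal}M)_{t,t})$ for all $t$. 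So it suffices to show
\[
\mathscr{P}(L_{t,t}^{\intercal}M_{t,t}) = \mathscr{P}((L^{\intercal}M)_{t,t}), \quad t \in [N],
\]
which in particular gives condition \eqref{eqn:AB.optimal.condition}.

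First, compute the diagonal block. By definition,
\[
(L^{\intercal}M)_{t,t} = \sum_{s} L_{s,t}^{\intercal}M_{s,t} = \sum_{s\ge t} L_{s,t}^{\intercal}M_{s,t},
\]
using block lower triangularity. By Proposition \ref{prop:martingale.Markov}(i), $L_{s,t}=L_{t,t}$ and $M_{s,t}=M_{t,t}$ for all $s\ge t$. Hence $(L^{\intercal}M)_{t,t} = (N-t+1)\, L_{t,t}^{\intercal}M_{t,t}$. The means play no role, since they only enter through $\|a-b\|_2^2$.

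Second, check that $\mathscr{P}(cC)=\mathscr{P}(C)$ for any $c>0$. Directly from the variational definition, $\tr(cCP)=c\,\tr(CP)$, so the maximizers over the unit ball $\{\|P\|_{2\to2}\le 1\}$ are unchanged. Alternatively, from the description in Proposition \ref{prop:trace}(ii), an SVD of $cC$ is $U(c\Sigma)V^{\intercal}$ with the same rank and the same $U_1,U_0,V_1,V_0$. Either way, the set $\mathscr{P}$ is identical.

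The only subtle point is a matter of interpretation. If $P_t\in\mathscr{P}(C)$ is characterized via a particular SVD, I must note that $\mathscr{P}(C)$ is intrinsically the argmax set and so does not depend on the chosen SVD; the argmax argument above handles this. Combining these steps, every adapted Brenier coupling $\pi^P$ satisfies the optimality criterion of Theorem \ref{thm:AW.filtered.Gaussians}(ii). Equivalently, its cost \eqref{eqn:adapted.Brenier.cost} equals $\|a-b\|_2^2+\dist_{\AW}^2(L,M)$, because $\tr((L^{\intercal}M)_{t,t}P_t)=(N-t+1)\|L_{t,t}^{\intercal}M_{t,t}\|_* = \|(L^{\intercal}M)_{t,t}\|_*$.
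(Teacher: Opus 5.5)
Your proof is correct and follows the paper's argument. The martingale structure gives $(L^{\intercal}M)_{t,t} = (N-t+1)L_{t,t}^{\intercal}M_{t,t}$, and since scaling by $N-t+1>0$ leaves the argmax set $\mathscr{P}$ unchanged, the criteria of Theorem \ref{thm:adapted.Brenier}(i) and Theorem \ref{thm:AW.filtered.Gaussians}(ii) coincide. You make explicit the scaling-invariance step that the paper leaves implicit.
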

\begin{proof}
Since $L_{t,t} = \cdots = L_{N,t}$ and $M_{t,t} = \cdots = M_{N,t}$, we have 
\[
(L^{\intercal}M)_{t,t} = (N - t + 1) L_{t,t}^{\intercal} M_{t,t}.
\]
The conclusion then follows from Theorem \ref{thm:AW.filtered.Gaussians} and Theorem \ref{thm:adapted.Brenier}.
\end{proof}

\section{Comparison via Gaussian random matrices} \label{sec:random.matrix} 
In this section, we consider a probabilistic analysis of various transport costs between filtered Gaussian processes, by letting the Cholesky factors $L$ and $M$ be independent random matrices with Gaussian entries. When $d$ is fixed and $N \rightarrow \infty$, we show in Theorem \ref{thm:ensemble.equiv} that the transport costs of all Gaussian bicausal couplings (with deterministic block correlations) are asymptotically equivalent, in the sense that the ratio between any two tends to $1$. Also, in Theorem \ref{thm:strict-gap} we show---as one would expect---that the Bures--Wasserstein distance is strictly smaller. While other distributional assumptions may be considered, the setting of Gaussian random matrices is a natural and convenient one where many results are readily available. It is also interesting to consider the other asymptotic regimes $d \rightarrow \infty$ (with $N$ fixed) or both $d, N \rightarrow \infty$ at suitable rates.

Throughout this section we work with the following set-up. Fix a spatial dimension $d \geq 1$. On a suitable probability space $(\Omega, \mathcal{F}, \mathbb{P})$, let $L_{s,t}, M_{s,t}$, $s \geq t$, be i.i.d.~random elements of $\bR^{d \times d}$ whose entries are i.i.d.~$\cN(0, 1)$ random variables. That is, the random variables $(L_{s,t})_{[i,j]}, (M_{s,t})_{[i,j]}$, indexed by $s, t \geq 1$, $s \geq t$ and $i, j \in [d]$, are i.i.d.~standard Gaussians. For $s < t$, we let $L_{s,t} = M_{s,t} = 0_{d \times d}$. For $N \geq 1$, let
\[
L^{(N)} := (L_{s,t})_{s,t \in [N]} \quad \text{and} \quad M^{(N)} := (M_{s,t})_{s,t \in [N]}
\]
be random elements of $\mathscr{L}(N, d)$. Using these, we define
\[
\bX^{(N)} := \bG^{L^{(N)}} \quad \text{and} \quad \bY^{(N)} := \bG^{M^{(N)}},
\]
which are random elements of $\cFG(N, d)$. The corresponding stochastic processes are denoted by $X^{(N)}$ and $Y^{(N)}$ respectively.

\begin{theorem} \label{thm:ensemble.equiv}
Consider the set-up described above. Let $P_1, P_2, \ldots$ be an arbitrary (deterministic) sequence in $\mathscr{C}(d)$. For each $N$, define
\[
P^{(N)} := \diag(P_1, \ldots, P_N) \in \bR^{Nd \times Nd}
\]
and, using the notation in Lemma \ref{lem:coupling.Gaussian}, define
\[
\pi^{(N)} := \pi^{P^{(N)}},
\]
which is a Gaussian bicausal coupling of $\bX^{(N)}$ and $\bY^{(N)}$. Consider the random variable
\[
\cT(\pi^{(N)}) := \bE_{\pi^{(N)}} [ \|X^{(N)} - Y^{(N)}\|_2^2],
\]
which is the transport cost between $\bX^{(N)}$ and $\bY^{(N)}$ under $\pi^{(N)}$. Then, we have
\begin{equation}  \label{eqn:a.s.limit}
\lim_{N \rightarrow \infty} \frac{1}{N^2} \cT(\pi^{(N)}) = d^2 \quad \text{almost surely.}
\end{equation}
In particular, we have, almost surely,
\[
\lim_{N \rightarrow \infty} \frac{\cT(\pi^{(N)})}{\AW_2^2(\bX^{(N)}, \bY^{(N)})} = 1.
\]
\end{theorem}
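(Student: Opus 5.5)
By \eqref{eqn:pi.P.cost} with $a=b=0$, the cost splits as
\[
\cT(\pi^{(N)}) = \|L^{(N)}\|_{\mathrm{F}}^2 + \|M^{(N)}\|_{\mathrm{F}}^2 - 2\sum_{t=1}^N \tr\big((L^{(N)\intercal}M^{(N)})_{t,t}P_t\big).
\]
The plan is to show that each Frobenius term, divided by $N^2$, tends to $d^2/2$ almost surely. The cross term, divided by $N^2$, should tend to $0$ almost surely, uniformly over all choices of contractions $P_t$.

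For the Frobenius terms, note that $\|L^{(N)}\|_{\mathrm{F}}^2$ is a sum of $K_N := d^2 N(N+1)/2$ squares of i.i.d.\ standard Gaussians. The matrices are nested as $N$ grows, so this is a partial sum of a single i.i.d.\ sequence of $\chi^2_1$ variables evaluated along the subsequence $K_N$. The strong law of large numbers then gives $\|L^{(N)}\|_{\mathrm{F}}^2/K_N \to 1$, hence $\|L^{(N)}\|_{\mathrm{F}}^2/N^2 \to d^2/2$ almost surely. The same holds for $M^{(N)}$, so together they contribute $d^2$ in the limit.

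For the cross term, use $|\tr(CP_t)| \le \|C\|_* \le \sqrt d\,\|C\|_{\mathrm F}$ from \eqref{eqn:trace.inequality} and \eqref{eqn:matrix.norm.inequality}, since $\|P_t\|_{2\to2}\le 1$. This bounds the cross term by
\[
2\sqrt d \sum_{t\le N} \|C_t^{(N)}\|_{\mathrm F}, \qquad C^{(N)}_t := (L^{(N)\intercal}M^{(N)})_{t,t} = \sum_{s=t}^N L_{s,t}^\intercal M_{s,t}.
\]
This bound removes the dependence on the $P_t$; it also covers the AW-optimal choice, which is data-dependent. Each entry of $C_t^{(N)}$ is a sum of $d(N-t+1)$ i.i.d.\ products of independent Gaussians, which are mean zero with unit variance, so $\bE\|C_t^{(N)}\|_{\mathrm F}^2 = d^3(N-t+1)$. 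By Cauchy--Schwarz,
\[
\sum_t \|C_t^{(N)}\|_{\mathrm F} \le \sqrt N \Big(\sum_t \|C_t^{(N)}\|_{\mathrm F}^2\Big)^{1/2}.
\]
Set $S_N := \sum_t\|C^{(N)}_t\|_{\mathrm F}^2$; then $\bE S_N = O(N^2)$, so the cross term is of order $N^{3/2}$ in expectation.

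The main obstacle is upgrading this to an almost sure statement. I would bound a fourth moment, $\bE S_N^2 = O(N^4)$, via hypercontractivity or a direct computation for Gaussian chaos of order 4. Markov's inequality then gives $\bP(S_N > N^{2+\delta}) \le C N^{-2\delta}$. Taking $\delta > 1/2$ makes these probabilities summable, and Borel--Cantelli yields $S_N \le N^{2+\delta}$ eventually. Consequently the cross term is $O(N^{1/2}\cdot N^{1+\delta/2}) = O(N^{3/2+\delta/2}) = o(N^2)$ for $\delta<1$, which proves \eqref{eqn:a.s.limit}.

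For the ratio, the cross-term bound holds uniformly over all $P_t\in\mathscr C(d)$. Since $\AW_2^2(\bX^{(N)},\bY^{(N)})$ is the cost with the optimal $P_t\in\mathscr{P}((L^\intercal M)_{t,t})$ by Theorem~\ref{thm:AW.filtered.Gaussians}, the same argument shows $\AW_2^2/N^2 \to d^2$ almost surely. Dividing the two limits gives the ratio $1$.
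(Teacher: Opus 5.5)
Your proposal is correct and follows essentially the same route as the paper. Both use the cost decomposition \eqref{eqn:pi.P.cost}, the strong law for the Frobenius terms, and the uniform bound $|\tr(C P_t)| \le \|C\|_* \le \sqrt d\,\|C\|_{\mathrm F}$, then Cauchy--Schwarz, a fourth-moment estimate on the Gaussian-chaos entries of $(L^{\intercal}M)_{t,t}$, Markov's inequality and Borel--Cantelli as in Lemma \ref{lem:AWdiag}; the only cosmetic difference is that you control $\bE S_N^2 = O(N^4)$ with threshold $N^{2+\delta}$, $\delta\in(1/2,1)$, while the paper bounds $\bE\big[(\sum_t \|(L^{\intercal}M)_{t,t}\|_*)^4\big] = O(N^6)$ with threshold $\delta N^2$.
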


The proof of Theorem \ref{thm:ensemble.equiv} makes use of the following lemmas.

\begin{lemma}\label{lem:frob-ssln}
It holds almost surely that
\[
\lim_{N \rightarrow \infty} \frac{1}{N^2} \|L^{(N)}\|_{\mathrm{F}}^2 = \frac{1}{2}d^2.
\]
\end{lemma}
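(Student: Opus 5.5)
The plan is to write $\|L^{(N)}\|_{\mathrm{F}}^2$ as a sum of i.i.d.\ terms and apply the strong law of large numbers along the growing triangular array. By construction,
\[
\|L^{(N)}\|_{\mathrm{F}}^2 = \sum_{1 \le t \le s \le N} \|L_{s,t}\|_{\mathrm{F}}^2 = \sum_{1 \le t \le s \le N} \sum_{i,j=1}^d (L_{s,t})_{[i,j]}^2 .
\]
This is a sum of $K_N := d^2 N(N+1)/2$ i.i.d.\ $\chi^2_1$ random variables with mean $1$. Moreover, the array is nested: passing from $N$ to $N+1$ only adds the new row block $(L_{N+1,t})_{t \le N+1}$.

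To apply the SLLN, enumerate the i.i.d.\ family $\{(L_{s,t})_{[i,j]}^2 : s \ge t \ge 1,\ i,j \in [d]\}$ as a single sequence $\xi_1, \xi_2, \ldots$. Order it row block by row block: first $s=1$, then $s=2$, and so on, with any fixed order inside each block. With this order, $\|L^{(N)}\|_{\mathrm{F}}^2 = S_{K_N} := \xi_1 + \cdots + \xi_{K_N}$ exactly. The classical SLLN then gives $S_n/n \to 1$ almost surely as $n \to \infty$. Since $K_N \to \infty$, the subsequence satisfies $S_{K_N}/K_N \to 1$ almost surely.

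It remains to note that $K_N / N^2 = d^2 (N+1)/(2N) \to d^2/2$. Multiplying the two limits gives $\|L^{(N)}\|_{\mathrm{F}}^2 / N^2 \to d^2/2$ almost surely.

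No step here is a real obstacle. The only point needing care is the enumeration: it must respect the nesting in $N$, so that the quantity for each $N$ is an initial partial sum of one fixed i.i.d.\ sequence rather than a sum over a fresh array. The row-block ordering above does exactly this. The same argument applies verbatim to $M^{(N)}$.
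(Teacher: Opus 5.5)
Your proposal is correct and is essentially the paper's argument: the paper also counts the $\binom{N+1}{2}$ nonzero blocks of $d\times d$ i.i.d.\ $\mathcal{N}(0,1)$ entries and invokes the strong law of large numbers. Your row-block enumeration makes explicit a point the paper leaves implicit, namely that $\|L^{(N)}\|_{\mathrm{F}}^2$ is an initial partial sum of one fixed i.i.d.\ sequence, so the almost sure statement is justified.
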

\begin{proof}
Note that $L^{(N)}$ contains $\binom{N+1}{2} = N(N+1)/2$ non-zero blocks that contain $d\times d$ $\cN(0, 1)$ entries. Since $\bE\big[(L_{s,t})_{[i,j]}^2\big] = 1$, the desired limit follows immediately from the strong law of large numbers.
\end{proof}

\begin{lemma}\label{lem:AWdiag}
It holds almost surely that
\[
\lim_{N \rightarrow \infty} \frac{1}{N^2}\sum_{t=1}^N \left\| \left((L^{(N)})^\intercal (M^{(N)})\right)_{t,t} \right\|_* = 0.
\]
\end{lemma}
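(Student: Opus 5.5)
Write $C_t := ((L^{(N)})^\intercal M^{(N)})_{t,t} = \sum_{s=t}^N L_{s,t}^\intercal M_{s,t} \in \bR^{d\times d}$. This is a sum of $n_t := N-t+1$ i.i.d.\ mean-zero random matrices. By \eqref{eqn:matrix.norm.inequality}, $\|C_t\|_* \le \sqrt{d}\,\|C_t\|_{\mathrm{F}}$. It therefore suffices to show that $N^{-2}\sum_{t=1}^N \|C_t\|_{\mathrm{F}} \to 0$ almost surely. Heuristically, $\|C_t\|_{\mathrm{F}}$ is of order $\sqrt{n_t}\,d$, so the sum is $O(N^{3/2})$, which is negligible against $N^2$.

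For a first moment bound, note that the entries of $L_{s,t}^\intercal M_{s,t}$ are sums of $d$ products of independent standard Gaussians, so each entry has mean zero and variance $d$. Independence across $s$ gives $\bE\|C_t\|_{\mathrm{F}}^2 = n_t d^3$. Jensen's inequality then yields $\bE\sum_t \|C_t\|_{\mathrm{F}} \le d^{3/2}\sum_t \sqrt{n_t} \le d^{3/2} N^{3/2}$. This already gives convergence in $L^1$, and hence in probability, at rate $N^{-1/2}$. That rate is not summable, so an additional step is needed to upgrade to almost sure convergence.

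The main obstacle is this almost sure upgrade, because $C_t$ depends on $N$ through its upper summation limit, so the sequence is not a simple running average. The plan is to use a higher moment together with Borel--Cantelli. Set $S_N := \sum_{t=1}^N \|C_t^{(N)}\|_{\mathrm{F}}$. By Minkowski's inequality, $\|S_N\|_{L^4} \le \sum_t \|\,\|C_t\|_{\mathrm{F}}\|_{L^4}$. Each $C_t$ is a sum of $n_t$ i.i.d.\ mean-zero matrices with all moments finite, since the entries are products of Gaussians. A Rosenthal-type bound, or a direct fourth-moment expansion of the entries, gives $\bE\|C_t\|_{\mathrm{F}}^4 \le K_d\, n_t^2$, so that $\|\,\|C_t\|_{\mathrm{F}}\|_{L^4} \le K_d' \sqrt{n_t}$. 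Consequently $\bE S_N^4 \le K N^6$. Markov's inequality gives
\[
\bP(S_N > \delta N^2) \le \frac{K N^6}{\delta^4 N^8} = \frac{K}{\delta^4 N^2},
\]
which is summable in $N$. Borel--Cantelli then implies $\limsup_N S_N/N^2 \le \delta$ almost surely for every $\delta>0$. Combined with the norm comparison from the first paragraph, this proves the lemma.

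The only computation still to be verified is the fourth-moment estimate $\bE\|C_t\|_{\mathrm{F}}^4 = O(n_t^2)$. For a fixed entry $c = \sum_s \xi_s$ with i.i.d.\ mean-zero $\xi_s$, we have $\bE c^4 = n_t\bE\xi^4 + 3n_t(n_t-1)(\bE\xi^2)^2$. Summing over the $d^2$ entries and using $(\sum_{i} x_i)^2 \le d^2\sum_i x_i^2$ then gives the claim.
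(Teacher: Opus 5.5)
Your proposal is correct and follows essentially the same route as the paper. You reduce $\|\cdot\|_*$ to $\sqrt{d}\|\cdot\|_{\mathrm{F}}$, bound $\bE\bigl[(\sum_t \|C_t\|_{\mathrm{F}})^4\bigr]$ by $O(N^6)$ via a fourth-moment expansion of the entries of $C_t$, and conclude with Markov's inequality and Borel--Cantelli. The differences are only cosmetic: you use Minkowski in $L^4$ where the paper uses $(\sum_t x_t)^4 \le N^3\sum_t x_t^4$, and you expand by blocks $s$ rather than by individual Gaussian products, which gives the same $N^6$ order.
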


\begin{proof}
For notational simplicity, we use the shorthand $L = L^{(N)}$ and $M = M^{(N)}$. From \eqref{eqn:matrix.norm.inequality}, we have the bound
\[
\sum_{t=1}^N \normnuc{(L^\intercal M)_{t,t}}
\le \sqrt{d}\sum_{t=1}^N \normF{(L^\intercal M)_{t,t}}.
\]
Applying the Cauchy--Schwarz inequality to the right hand side and then taking expectation, we have
\[
\bE \left[ \left( \sum_{t = 1}^N \|(L^{\intercal}M)_{t,t} \|_* \right)^4 \right] \leq d^2 N^3 \sum_{t = 1}^N \bE\left[\| (L^{\intercal}M)_{t,t}\|_{\mathrm{F}}^4  \right].
\]

We proceed to bound the fourth moment of $\|(L^{\intercal}M)_{t,t}\|_{\mathrm{F}}$. By Cauchy--Schwarz again, we have
\begin{equation} \label{eqn:CS2}
\|(L^{\intercal}M)_{t,t}\|_{\mathrm{F}}^4 = \left( \sum_{i, j = 1}^d ((L^{\intercal}M)_{t,t})_{[i,j]}^2 \right)^2 \leq d^2 \sum_{i, j = 1}^d (((L^{\intercal}M)_{t,t})_{[i,j]})^4.
\end{equation}

For $t \in [N]$ and $i, j \in [d]$, write
\[
((L^{\intercal}M)_{t,t})_{[i,j]} = \sum_{s = t}^N (L_{s,t}^\intercal M_{s,t})_{[i,j]}
\]
and note that it is the sum of $m := (N - t + 1)d$ independent products of two independent standard Gaussian random variables. Let $U_1, \ldots, U_m$ denote the products. Then $\bE[U_{\ell}] = \bE[U_{\ell}^3] = 0$, $\bE[U_{\ell}^2] = 1$ and $\bE[U_{\ell}^4] = 9$. It follows that
\begin{equation*}
\begin{split}
\bE\left[ (((L^{\intercal}M)_{t,t})_{[i,j]})^4 \right] &= \bE \left[ \left( \sum_{\ell = 1}^m  U_{\ell} \right)^4 \right]\\
&= m \bE[U_1^4] + \binom{4}{2} \binom{m}{2} \bE[U_1^2]^2 \\
&= 9m + 3m(m - 1) \\
&\leq C_d(N - t + 1)^2,
\end{split}
\end{equation*}
for some constant $C_d>0$ depending only on $d$.
Combining the previous computations, we have
\[
\bE \left[ \left( \sum_{t = 1}^N \|(L^{\intercal}M)_{t,t} \|_* \right)^4 \right] \leq C_dd^3 N^3 \sum_{t = 1}^N (N - t + 1)^2 \leq C N^6,
\]
where $C$ is a constant depending only on $d$. 

Now Markov's inequality gives, for any $\delta > 0$,
\[
\bP \left( \frac{1}{N^2} \sum_{t = 1}^N \| (L^{\intercal} M)_{t,t} \|_* \geq \delta \right) \leq \frac{C N^6}{\delta^4 N^8} = \frac{C}{\delta^4 N^2}.
\]
Since $\sum_{N \geq 1} N^{-2} < \infty$, the desired almost sure limit follows from the Borel--Cantelli lemma.
\end{proof}

\begin{proof}[Proof of Theorem \ref{thm:ensemble.equiv}]
From \eqref{eqn:pi.P.cost}, we have
\begin{equation} \label{eqn:ratio.proof}
\cT(\pi^{(N)}) = \|L^{(N)}\|_{\mathrm{F}}^2 + \|M^{(N)}\|_{\mathrm{F}}^2 - 2\sum_{t = 1}^N \tr \left((( L^{(N)})^{\intercal} M^{(N)})_{t,t} P_t \right).
\end{equation}
Moreover, Proposition \ref{prop:trace} (applied to both $C$ and $-C$) implies that for any choice of $P_t \in \mathscr{C}(d)$, we have
\[
| \tr \left((( L^{(N)})^{\intercal} M^{(N)})_{t,t} P_t \right) | \leq \| (( L^{(N)})^{\intercal} M^{(N)})_{t,t} \|_*.
\]
Divide \eqref{eqn:ratio.proof} by $N^2$ and let $N \rightarrow \infty$. Each of the first two terms tends to $\frac{1}{2}d^2$ by Lemma \ref{lem:frob-ssln}, and the last tends to $0$ by Lemma \ref{lem:AWdiag}. This gives \eqref{eqn:a.s.limit} and the proof is complete. 
\end{proof}

 In Figure \ref{fig:random-matrix-ratio-comparison}, we show empirically the convergence in Theorem \ref{thm:ensemble.equiv}, along paths of simulated pairs of Gaussian matrices $(L^{(N)}, M^{(N)})$. In particular, we show the relation among the transport costs of the synchronous, adapted-Brenier and $\AW_2$-optimal couplings. Although their transport costs are asymptotically equivalent, for finite $N$ they may give quite different results. For example, on average the adapted Brenier coupling is better than the synchronous coupling.

\begin{figure}[t]
  \centering
    \includegraphics[scale = 0.5]{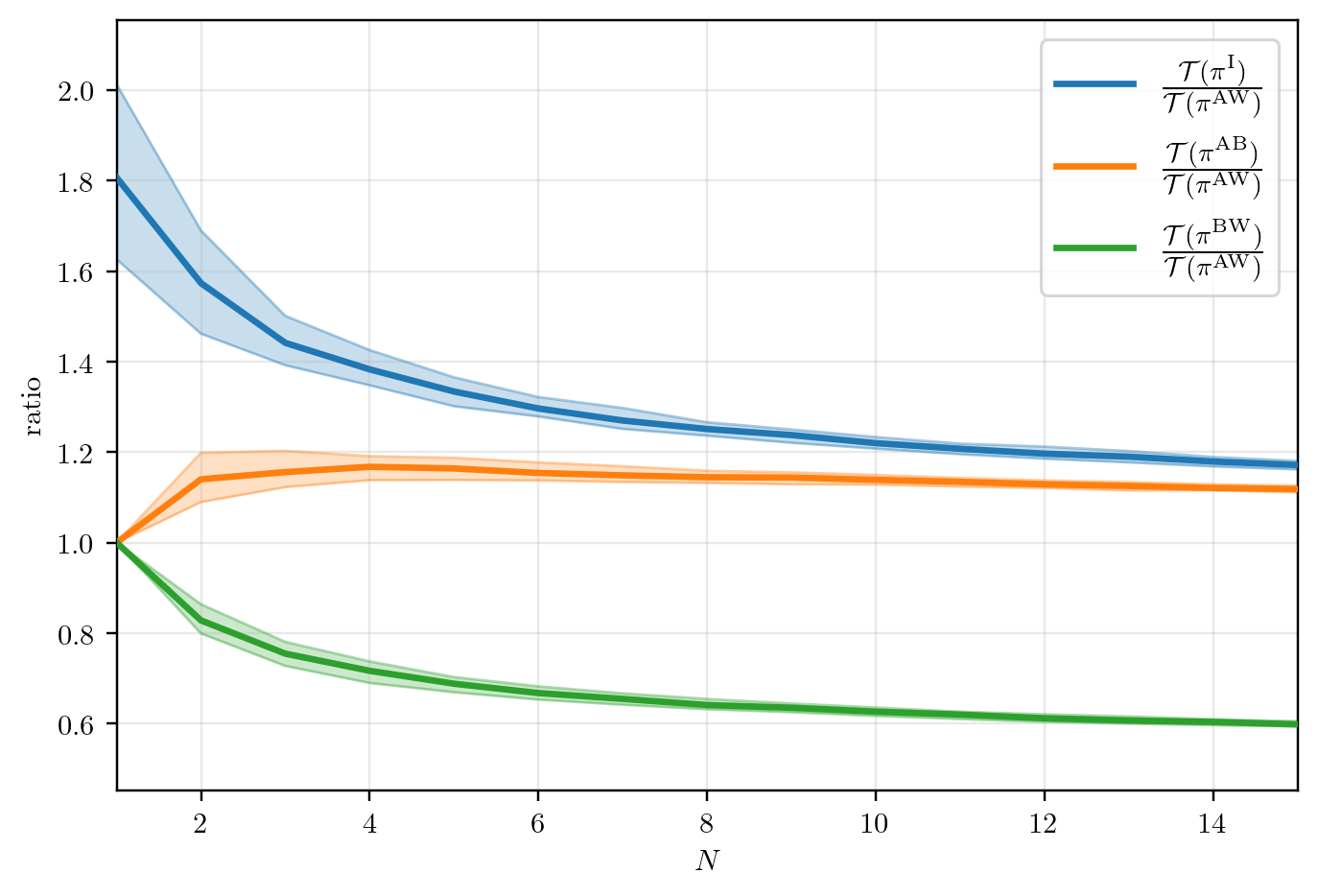}
\caption{Comparison of the transport costs $\mathcal{T}(\cdot)$ of the synchronous coupling $\pi^{I}$, adapted Brenier coupling $\pi^{\mathrm{AB}}$, $\AW_2$-optimal coupling $\pi^{\mathrm{AW}}$, and the Brenier ($\mathcal{W}_2$-optimal) coupling $\pi^{\mathrm{BW}}$. We plot the ratios along 100 simulated paths of $L^{(N)}$ and $M^{(N)}$ with $d = 5$; the solid curves show the sample medians and the shaded bands show the inter-quartile ranges.  }
  \label{fig:random-matrix-ratio-comparison}
\end{figure}

Our second result, also illustrated in Figure \ref{fig:random-matrix-ratio-comparison}, shows that the Bures--Wasserstein distance, which is not subject to the bicausality constraint, is strictly smaller.

\begin{theorem}
\label{thm:strict-gap}
Let $A^{(N)} := (L^{(N)})(L^{(N)})^{\intercal}$ and $B^{(N)} := (M^{(N)})(M^{(N)})^{\intercal}$ be the random covariance matrices induced by $L^{(N)}$ and $M^{(N)}$ respectively. Then, there exists $\rho < 1$ such that
\begin{equation} \label{eqn:strict-gap}
\limsup_{N\to\infty} \frac{\dist_{\BW}(A^{(N)}, B^{(N)})}{\dist_{\mathrm{AW}}(L^{(N)}, M^{(N)})} \le \rho \quad \text{almost surely}.
\end{equation}
\end{theorem}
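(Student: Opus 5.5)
The plan is to show that the classical cost is, to leading order, a fixed fraction below the adapted one. By Theorem \ref{thm:ensemble.equiv} (applied with $P_t$ optimal), or directly from Lemmas \ref{lem:frob-ssln} and \ref{lem:AWdiag}, we already have $N^{-2}\dist_{\mathrm{AW}}^2(L^{(N)},M^{(N)}) \to d^2$ almost surely. By Proposition \ref{prop:Wasserstein.Cholesky}(i),
\[
\dist_{\BW}^2(A^{(N)},B^{(N)}) = \|L^{(N)}\|_{\mathrm{F}}^2 + \|M^{(N)}\|_{\mathrm{F}}^2 - 2\|(L^{(N)})^{\intercal}M^{(N)}\|_*,
\]
and the first two terms are $\sim d^2N^2$ by Lemma \ref{lem:frob-ssln}. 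It therefore suffices to find a deterministic $c>0$ with $\liminf_N N^{-2}\|L^{\intercal}M\|_* \ge c$ almost surely. This gives \eqref{eqn:strict-gap} with $\rho = (1-2c/d^2)^{1/2}$.

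To lower bound the nuclear norm I would use the elementary inequality
\[
\|C\|_* \ge \|C\|_{\mathrm{F}}^2/\|C\|_{2\to2},
\]
which holds because $\sum\sigma_i^2 \le \sigma_1\sum\sigma_i$. I apply it to $C = L^{\intercal}M$ with $L=L^{(N)}$, $M=M^{(N)}$, and the proof reduces to two estimates.

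The first estimate is $\|L^{\intercal}M\|_{\mathrm{F}}^2 \ge c_1N^3$ eventually, almost surely. Each entry of the block $(L^{\intercal}M)_{t,t'}$ is a sum of $(N-\max(t,t')+1)d$ independent products of independent standard Gaussians. The expectation is therefore
\[
\bE\|L^{\intercal}M\|_{\mathrm{F}}^2 = d^3\sum_{t,t'}(N-\max(t,t')+1) \sim d^3N^3/3.
\]
For almost sure convergence, I would compute a variance or fourth-moment bound of order $N^5$ in the same way as in the proof of Lemma \ref{lem:AWdiag}. Conditioning on $L$ makes this a Gaussian quadratic form in $M$, which keeps the computation clean. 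Chebyshev's inequality together with Borel--Cantelli along all $N$ then gives $N^{-3}\|L^{\intercal}M\|_{\mathrm{F}}^2 \to d^3/3$ almost surely. If the variance bound only yields summability along a subsequence, I would pass to $N_k = k^2$ and interpolate by monotonicity in $N$, since adding rows and columns only adds nonnegative terms to the squared Frobenius norm.

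The second estimate, which I expect to be the main obstacle, is the bound $\|L^{\intercal}M\|_{2\to2} \le \|L\|_{2\to2}\|M\|_{2\to2} \le C_2N$ eventually, almost surely. This requires $\|L^{(N)}\|_{2\to2} = O(\sqrt N)$ for a random block lower triangular Gaussian matrix. I would first try the Bandeira--van Handel inequality for Gaussian matrices with a variance profile: $\bE\|L\|_{2\to2} \lesssim \sigma + \sigma_*\sqrt{\log(Nd)}$, where $\sigma = \max_i(\sum_j \Var(L_{[i,j]}))^{1/2} \le \sqrt{Nd}$ and $\sigma_* = 1$. This gives $\bE\|L\|_{2\to2} \le C\sqrt{N}$. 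Since $L \mapsto \|L\|_{2\to2}$ is $1$-Lipschitz in the Gaussian entries, Gaussian concentration gives $\bP(\|L\|_{2\to2} > C\sqrt N + s) \le e^{-s^2/2}$. Taking $s=\sqrt N$ makes these tail probabilities summable, so Borel--Cantelli applies. Alternatively, one could cite the known asymptotics $\|T_N\|/\sqrt N \to \sqrt e$ for random triangular matrices (Dykema--Haagerup).

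Combining the two estimates gives $\|L^{\intercal}M\|_* \ge (c_1/C_2^2)N^2$ eventually, almost surely. Taking $c = c_1/C_2^2$ (and noting that $2c < d^2$, since the Bures--Wasserstein distance is nonnegative) finishes the proof.
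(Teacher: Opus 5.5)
Your argument follows the paper's proof. You use the Cholesky form of $\dist_{\BW}^2$, Lemma~\ref{lem:frob-ssln}, and the bound $\|C\|_*\ge\|C\|_{\mathrm{F}}^2/\|C\|_{2\to2}$ with submultiplicativity. You then need an order-$N^3$ lower bound for $\|L^\intercal M\|_{\mathrm{F}}^2$ (the paper's Lemma~\ref{lem:LTM.F.limit}) and $O(\sqrt N)$ operator norms (Lemma~\ref{lem:opnorm.bound}). For the operator norm, your Bandeira--van Handel bound plus Lipschitz Gaussian concentration works as well as the paper's Lata{\l}a bound plus Borell--TIS. (Minor: Theorem~\ref{thm:ensemble.equiv} requires deterministic $P_t$, so the optimal $P_t$ cannot be plugged in. Your direct route via Lemmas~\ref{lem:frob-ssln} and~\ref{lem:AWdiag} is the right way to get $N^{-2}\dist_{\mathrm{AW}}^2\to d^2$.)

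The step that does not go through as written is the almost-sure Frobenius estimate. Write $F_N:=\|(L^{(N)})^\intercal M^{(N)}\|_{\mathrm{F}}^2$. A variance of order $N^5$ gives Chebyshev tails of order $1/(\varepsilon^2 N)$, which are not summable. Your fallback interpolation is also false, because $N\mapsto F_N$ is not monotone. Enlarging $N$ does not just append blocks; it perturbs every existing block:
\[
\big((L^{(N+1)})^\intercal M^{(N+1)}\big)_{t,t'}=\big((L^{(N)})^\intercal M^{(N)}\big)_{t,t'}+L_{N+1,t}^\intercal M_{N+1,t'},\qquad t,t'\le N,
\]
and the added term can cancel existing entries.

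What rescues the argument is that $\Var(F_N)$ is actually $O(N^4)$. Use the law of total variance. The conditional mean $\bE[F_N\mid L]=d\sum_{s\le N}s\sum_{t\le s}\|L_{s,t}\|_{\mathrm{F}}^2$ has variance $2d^4\sum_{s\le N}s^3$. Conditionally on $L$, the columns of $M$ are independent and each contributes a Gaussian quadratic form, so $\Var(F_N\mid L)\le 2Nd\,\|LL^\intercal\|_{\mathrm{F}}^2$, and $\bE\|LL^\intercal\|_{\mathrm{F}}^2=O(N^3)$. The paper instead bounds $\Var(F_N\mid L)\le 2\|L\|_{2\to2}^2\,\bE[F_N\mid L]=O(N^4)$ on the event of Lemma~\ref{lem:opnorm.bound}, and handles the conditional mean via Kolmogorov's criterion and Kronecker's lemma. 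Either way, Chebyshev gives tails $O(N^{-2})$. Borel--Cantelli then applies along every $N$, and no subsequence or interpolation is needed.
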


We need the following lemmas whose proofs will be given in the appendix.

\begin{lemma}\label{lem:opnorm.bound}
There exists a constant $K > 0$, depending only on $d$, such that almost surely, for $N$ sufficiently large we have:
\[
\|L^{(N)}\|_{2\to 2}\le K\sqrt N \quad \text{and} \quad \|M^{(N)}\|_{2\to 2}\le K\sqrt N.
\]
\end{lemma}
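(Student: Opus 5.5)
We need to bound the operator norm of a random block lower-triangular Gaussian matrix. The plan is to dominate $\|L^{(N)}\|_{2\to 2}$ by its column blocks, or by embedding it into a full Gaussian matrix, and then use standard Gaussian concentration together with Borel--Cantelli.

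\emph{Step 1 (reduction to scalar lower-triangular Gaussian matrices).} Since $L^{(N)}$ is block lower triangular with i.i.d.\ $\cN(0,1)$ entries in the nonzero blocks, it is an $Nd\times Nd$ matrix whose entries are independent, each either $0$ or standard Gaussian. Write $L^{(N)} = G\circ S$, where $G$ is a full $Nd\times Nd$ matrix with i.i.d.\ $\cN(0,1)$ entries (extending the given entries by independent fresh Gaussians) and $S$ is the $0/1$ pattern. The Hadamard mask does not in general contract the operator norm, so I will not rely on it. Instead I will use a direct argument: for fixed unit vectors $u,v$, the bilinear form $u^{\intercal}L^{(N)}v$ is a centered Gaussian with variance $\sum_{(i,j)\in \mathrm{supp}} u_{[i]}^2 v_{[j]}^2\le 1$. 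An $\varepsilon$-net argument with $\varepsilon=1/4$ on the sphere in $\bR^{Nd}$ then gives
\[
\bP\big(\|L^{(N)}\|_{2\to 2}\ge 2s\big)\le 9^{2Nd}e^{-s^2/2}.
\]

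\emph{Step 2 (choice of constant and summability).} Take $s=K'\sqrt N$ with $K'$ large enough that $2Nd\log 9 - K'^2 N/2\le -N$, e.g.\ $K'^2 = 4d\log 9+2$. Then $\bP(\|L^{(N)}\|_{2\to 2}\ge 2K'\sqrt N)\le e^{-N}$, which is summable in $N$. By Borel--Cantelli, almost surely $\|L^{(N)}\|_{2\to2}\le K\sqrt N$ for all large $N$, with $K=2K'$ depending only on $d$. The same argument applies verbatim to $M^{(N)}$, and the intersection of two almost-sure events is almost sure.

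\emph{Main obstacle.} The only delicate point is the net argument: bounding $\sup_{u,v}u^{\intercal}Lv$ by twice its maximum over net points, via the standard inequality $\|L\|\le (1-2\varepsilon)^{-1}\max_{u,v\in\text{net}}u^{\intercal}Lv$, which for $\varepsilon=1/4$ gives the factor $2$. It is also necessary to note that the variance bound holds despite the triangular zero pattern, since zeroed entries only decrease the variance. Alternatively, one could cite the Gordon/Davidson--Szarek bound $\bE\|G\|\le 2\sqrt{Nd}$ for the full Gaussian matrix. However, transferring this bound to the masked matrix would require comparison inequalities, so the self-contained net argument is preferable.
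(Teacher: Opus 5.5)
Your proof is correct, but it takes a different route from the paper's. The paper splits the bound into a mean estimate and a concentration estimate. For the mean, it invokes Lata{\l}a's theorem for matrices of the form $(a_{[i,j]}G_{[i,j]})$ with a deterministic $0/1$ variance profile, obtaining $\bE\|L^{(N)}\|_{2\to 2}\le C\sqrt{Nd}$; this is exactly how it sidesteps the Hadamard-mask issue you correctly flagged. For concentration, it applies the Borell--TIS inequality to the Gaussian process $(u^{\intercal}L^{(N)}v)_{u,v}$. This uses the same variance bound $\Var(u^{\intercal}L^{(N)}v)\le 1$ that you use, with deviation $2\sqrt{\log N}$, giving the summable tail $N^{-2}$.

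You instead replace both tools by a union bound over a $1/4$-net. This yields directly the tail $9^{2Nd}e^{-K'^2N/2}\le e^{-N}$ at level $2K'\sqrt N$. Your arithmetic checks out: $K'^2=4d\log 9+2$, the factor $(1-2\varepsilon)^{-1}=2$, and net cardinality $9^{Nd}$ per sphere.

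What each approach buys:
\begin{itemize}
\item Your argument is fully self-contained. Since only the order $\sqrt N$ is needed, the crude union bound costs nothing in the rate; both arguments produce $K$ of order $\sqrt d$. It also extends verbatim to independent sub-Gaussian entries, where Borell--TIS does not apply directly.
\item The paper's approach gives sharper information: $\|L^{(N)}\|_{2\to 2}$ eventually lies within $2\sqrt{\log N}$ of its mean. Its mean bound also comes from a general theorem covering arbitrary variance profiles.
\end{itemize}
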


\begin{lemma}\label{lem:LTM.F.limit}
It holds almost surely that
\[
\lim_{N\to\infty}\frac{1}{N^3}\|(L^{(N)})^{\intercal}M^{(N)}\|_{\mathrm{F}}^2
=
\frac{d^3}{3}
\qquad\text{almost surely.}
\]
\end{lemma}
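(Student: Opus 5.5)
We need to prove Lemma LTM.F.limit: $\frac{1}{N^3}\|L^\intercal M\|_F^2 \to d^3/3$ almost surely.

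The plan is a second-moment (or fourth-moment) computation followed by Borel--Cantelli, as in the proof of Lemma \ref{lem:AWdiag}. Write $S_N := \|L^{\intercal}M\|_{\mathrm{F}}^2 = \sum_{t,u\in[N]}\sum_{i,j\in[d]} \big((L^{\intercal}M)_{t,u}\big)_{[i,j]}^2$. Here
\[
\big((L^{\intercal}M)_{t,u}\big)_{[i,j]} = \sum_{s \ge \max(t,u)} \sum_{k=1}^d (L_{s,t})_{[k,i]} (M_{s,u})_{[k,j]},
\]
a sum of $m_{t,u} := (N-\max(t,u)+1)d$ products of standard Gaussians. For fixed $(t,u,i,j)$ these products are independent with mean zero and unit variance, so this entry has second moment $m_{t,u}$. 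Therefore
\[
\bE[S_N] = d^2 \sum_{t,u} (N-\max(t,u)+1)d = d^3\sum_{r=1}^N r\,(2(N-r)+1),
\]
after counting the pairs with $\max(t,u)=N-r+1$. This sum equals $N^3/3 + O(N^2)$, so $\bE[S_N]/N^3 \to d^3/3$.

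Next comes the variance bound, which is the main obstacle. I will expand $S_N$ as a quadratic form in the product variables,
\[
S_N = \sum \big(L_{s,t}\big)_{[k,i]}\big(M_{s,u}\big)_{[k,j]}\big(L_{s',t}\big)_{[k',i]}\big(M_{s',u}\big)_{[k',j]}.
\]
Splitting off the diagonal terms $(s,k)=(s',k')$ gives $S_N = D_N + O_N$. The off-diagonal part $O_N$ has mean zero, and its terms are orthogonal unless their index sets pair up. A direct count of contributing pairings should give $\Var(S_N) = O(N^5)$: each of $t,u,s,s'$ ranges over $N$ values, with one extra free index from the pairing, while $d$ is fixed. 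Rather than chase exact constants, I will bound $\Var(S_N)$ by grouping terms according to the entries $(L_{s,t})_{[k,i]}$ shared among the four factors, using Wick/Isserlis and independence of $L$ and $M$. Since $L$ and $M$ are independent, the conditional variance given $L$ and the variance of the conditional mean can be handled separately. Given $L$, $S_N$ is a Gaussian chaos of order two in $M$, so
\[
\Var(S_N\mid L) \le C\,\|L\|_{2\to2}^2\,\bE[S_N\mid L],
\]
and I will then use fourth-moment bounds on the entries.

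Assuming $\Var(S_N)\le CN^5$, Chebyshev gives $\bP(|S_N-\bE S_N|\ge \delta N^3)\le C/(\delta^2 N)$. This is not summable, so I will pass to the subsequence $N_k=k^2$, where Borel--Cantelli applies since $\sum_k k^{-2}<\infty$. I will then interpolate using monotonicity: $S_N$ is nondecreasing in $N$, because enlarging $N$ only adds blocks and adds nonnegative terms in each entry's sum over $s$ (entrywise the sums grow, but squares need not be monotone). If that fails, I will instead bound the fourth central moment by $O(N^{10})$ and apply Markov directly, which gives a summable $N^{-2}$ rate. This fourth-moment estimate via hypercontractivity for Gaussian chaos of order four, $\|S_N-\bE S_N\|_4\le 3^2\|S_N-\bE S_N\|_2$, is what I would try first. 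Combined with the $O(N^5)$ variance, it gives $\bE|S_N-\bE S_N|^4 \le CN^{10}$, so $\bP(|S_N-\bE S_N|\ge\delta N^3)\le C/(\delta^4N^2)$ is summable, and Borel--Cantelli concludes.
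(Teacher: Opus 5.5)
Your main line is sound, and it takes a different route from the paper.

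\textbf{Your route versus the paper's.} You work unconditionally:
\begin{itemize}
\item compute $\bE S_N=d^3N(N+1)(2N+1)/6$ exactly (your count is right);
\item bound $\Var(S_N)=O(N^5)$;
\item apply hypercontractivity to the degree-$4$ Gaussian polynomial $S_N$ to get $\bE|S_N-\bE S_N|^4\le 9^4\Var(S_N)^2=O(N^{10})$;
\item finish with Markov and Borel--Cantelli.
\end{itemize}
The paper instead conditions on $L$. It shows $N^{-3}\bE[S_N\mid L]=N^{-3}d\sum_{s\le N}s\sum_{t\le s}\|L_{s,t}\|_{\mathrm{F}}^2\to d^3/3$ almost surely, using Kolmogorov's criterion and Kronecker's lemma. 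It then combines $\Var(S_N\mid L)\le 2\|L\|_{2\to2}^2\,\bE[S_N\mid L]$ with the almost sure bound $\|L\|_{2\to2}\le K\sqrt N$ of Lemma \ref{lem:opnorm.bound}. So the conditional variance is $O(N^4)$, and conditional Chebyshev is already summable.

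The trade-off is this. With a crude variance bound, your route never needs the operator-norm lemma (Lata{\l}a plus Borell--TIS), and it handles the mean by a deterministic computation. The price is hypercontractivity, which makes up for the lost factor of $N$. The paper stays at second moments but relies on the sharper operator-norm estimate, which it needs anyway for Theorem \ref{thm:strict-gap}.

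\textbf{Two things to fix.}

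First, the variance bound is the one step with real content, and you only assert it (``should give''). It closes quickly along the decomposition you mention.
\begin{itemize}
\item The variables $\sum_{t\le s}\|L_{s,t}\|_{\mathrm{F}}^2\sim\chi^2_{sd^2}$ are independent in $s$, so $\Var(\bE[S_N\mid L])=2d^4\sum_{s\le N}s^3=O(N^4)$.
\item Using $\|L\|_{2\to2}\le\|L\|_{\mathrm{F}}$ and $\bE[S_N\mid L]\le dN\|L\|_{\mathrm{F}}^2$ gives $\bE[\Var(S_N\mid L)]\le 2dN\,\bE\|L\|_{\mathrm{F}}^4=O(N^5)$, since $\|L\|_{\mathrm{F}}^2$ is chi-square with $d^2N(N+1)/2$ degrees of freedom.
\end{itemize}
In fact $\Var(S_N)$ is of order $N^4$. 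So your heuristic pairing count is off by a factor of $N$, and with a sharp bound Chebyshev alone would be summable, as in the paper.

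Second, discard the monotone interpolation. $S_N$ is not monotone in $N$: passing to $N+1$ adds $L_{N+1,t}^{\intercal}M_{N+1,u}$ to every existing block, and this can cancel. For $d=1$, the event $\{S_2<S_1\}$ has positive probability: take $L_{2,1}M_{2,1}\approx -L_{1,1}M_{1,1}$ and $L_{2,2},M_{2,2}\approx 0$. Your own parenthetical already notes this. The subsequence step is unnecessary anyway, because the hypercontractive fourth-moment bound already gives a summable $N^{-2}$ rate.
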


\begin{proof}[Proof of Theorem \ref{thm:strict-gap}]
Recall from Proposition \ref{prop:Wasserstein.Cholesky} that
\[
\dist_{\mathrm{BW}}^2(A^{(N)}, B^{(N)}) = \|L^{(N)}\|_{\mathrm{F}}^2 + \|M^{(N)}\|_{\mathrm{F}}^2 - 2\|(L^{(N)})^{\intercal} M^{(N)} \|_*.
\]
By Lemma \ref{lem:frob-ssln}, the first two terms on the right hand side are of order $\frac{1}{2}d^2 N^2$. To prove \eqref{eqn:strict-gap}, we will show that the last term is also of order $N^2$.

For any square matrix $C$, we have the inequality
\[
\|C\|_{\mathrm{F}}^2 = \sum_i \sigma_i^2(C) \leq \sigma_1(C) \sum_i \sigma_i(C) = \|C\|_{2 \rightarrow 2} \|C\|_*.
\]
When $C$ is non-zero, we may divide both sides by $\|C\|_{2 \rightarrow 2}$ to get the bound $\|C\|_* \ge \|C\|_{\mathrm{F}}^2 / \|C\|_{2\to 2}$. Applying this to $L^{\intercal}M$ (where again $L = L^{(N)}$ and $M = M^{(N)}$) and using the submultiplicity of the operator norm, we have, almost surely,
\begin{equation} \label{eq:bures-bound}
\frac{1}{N^2} \|L^\intercal M\|_* \ge \frac{  \|L^\intercal M\|_{\mathrm{F}}^2 / N^3 }{ (\|L\|_{2\to 2} / \sqrt{N}) (\|M\|_{2\to 2} / \sqrt{N}) }.
\end{equation}

By Lemma \ref{lem:LTM.F.limit}, we have 
\begin{equation} \label{eqn:LTM.limit}
\lim_{N \rightarrow \infty} \frac{1}{N^3} \| L^{\intercal} M \|_{\mathrm{F}}^2 = \frac{d^3}{3} \quad \text{almost surely.}
\end{equation}

Using \eqref{eq:bures-bound}, \eqref{eqn:LTM.limit} and Lemma \ref{lem:opnorm.bound}, we have 
\[
\liminf_{N \rightarrow \infty} \frac{1}{N^2} \|L^{\intercal}M\|_* \geq \frac{d^3/3}{K^2} > 0.
\]
It follows that
\begin{align*}
\limsup_{N \rightarrow \infty} \frac{1}{N^2} \dist_{\mathrm{BW}}^2(A^{(N)}, B^{(N)}) &\leq \frac{1}{2}d^2 + \frac{1}{2}d^2 - 2\frac{d^3/3}{K^2} \\
&< d^2 \\
&= \lim_{N \rightarrow \infty} \frac{1}{N^2} \dist_{\mathrm{AW}}^2(L^{(N)}, M^{(N)})
\end{align*}
and the proof is complete.


\end{proof}

\section{Adapted Gelbrich bound} \label{sec:Gelbrich}
In classical optimal transport, the $2$-Wasserstein distance between Gaussian distributions serves as a lower bound of $\cW_2$ between arbitrary probability measures on $\bR^n$ with finite second moment. Specifically, we have the following result proved by Gelbrich \cite{G1990}.

\begin{theorem}[Theorem 2.1 in \cite{G1990}]
Let $\mu, \nu \in \cP_2(\mathbb{R}^n)$. Let $(a, A) \in \bR^n \times \mathscr{S}_+(n)$ be the mean and covariance of $\mu$ and similarly let $(b, B) \in \bR^n \times \mathscr{S}_+(n)$ be those of $\nu$. Then
\begin{equation} \label{eqn:Gelbrich.bound}
\begin{split}
\cW_2^2(\mu, \nu) &\geq \cW_2^2( \cN_{Nd}(a, A), \cN_{Nd}(b, B)) = \|a - b\|_2^2 + \dist_{\mathrm{BW}}^2(A, B).
\end{split}
\end{equation}
\end{theorem}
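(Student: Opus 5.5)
The proof reduces the general case to the Gaussian one by computing the cost of any coupling through first and second moments only. Let $\pi \in \Cpl(\mu, \nu)$ be arbitrary and $(X, Y) \sim \pi$. First we centre: writing $X = a + \tilde X$, $Y = b + \tilde Y$ with $\bE[\tilde X] = \bE[\tilde Y] = 0$, the cross term vanishes and
\[
\bE_\pi[\|X - Y\|_2^2] = \|a-b\|_2^2 + \tr A + \tr B - 2\tr(\bE_\pi[\tilde X \tilde Y^{\intercal}]),
\]
as in Remark \ref{rmk:centering}. Only the cross-covariance $C := \bE_\pi[\tilde X \tilde Y^{\intercal}]$ enters, so it remains to prove
\[
\tr C \leq \tr\big((A^{1/2} B A^{1/2})^{1/2}\big) \quad \text{for every } C \text{ with } \begin{bmatrix} A & C \\ C^{\intercal} & B\end{bmatrix} \in \mathscr{S}_+(2n).
\]
The joint covariance matrix of $(\tilde X, \tilde Y)$ is positive semidefinite, so its $C$ satisfies this constraint.

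To bound $\tr C$, we use the Cholesky-type factors of Proposition \ref{prop:Wasserstein.Cholesky}. Take $A = LL^{\intercal}$ and $B = MM^{\intercal}$. Positive semidefiniteness of the block matrix gives the representation $C = L P M^{\intercal}$ for some contraction $P$ with $\|P\|_{2 \rightarrow 2} \leq 1$. This is the step needing care when $A$ or $B$ is singular.

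To obtain $P$, note that $C$ is the covariance of a jointly Gaussian pair with this covariance matrix. Write that pair as $(L\xi, M\eta)$ with $\xi, \eta$ standard normal, realised by regression on the ranges; when $L$ is not invertible, pad with independent noise in the kernel directions. Then $P = \bE[\xi \eta^{\intercal}]$ is a correlation matrix, and \eqref{eqn:correlation.matrix} shows it is a contraction. An alternative is the standard lemma that $C = A^{1/2} K B^{1/2}$ with $\|K\|_{2 \rightarrow 2} \leq 1$, which holds for singular $A$ and $B$ via pseudo-inverses.

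With this representation, $\tr C = \tr(M^{\intercal} L P) = \tr((L^{\intercal}M)^{\intercal} P)$. By Proposition \ref{prop:trace}(i), applied to the matrix $(L^{\intercal}M)^{\intercal}$, whose nuclear norm equals $\|L^{\intercal}M\|_*$, this is at most $\|L^{\intercal} M\|_*$. Proposition \ref{prop:Wasserstein.Cholesky}(i) identifies $\tr A + \tr B - 2\|L^{\intercal}M\|_*$ with $\dist_{\BW}^2(A, B)$. Therefore $\bE_\pi[\|X - Y\|_2^2] \geq \|a - b\|_2^2 + \dist_{\BW}^2(A, B)$ for every coupling $\pi$.

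Taking the infimum over $\pi$ gives the first inequality in \eqref{eqn:Gelbrich.bound}. The equality on the right is \eqref{eqn:W2}. The main obstacle is the factorisation $C = LPM^{\intercal}$ with a contraction $P$ in the degenerate case. I would handle it through the Schur-complement characterisation of positive semidefinite block matrices, which gives $C = A^{1/2} K B^{1/2}$ with $\|K\|_{2 \rightarrow 2} \le 1$. Then I would convert to $L$ and $M$ via the polar decompositions $L = A^{1/2} O_1$ and $M = B^{1/2} O_2$ with orthogonal $O_1, O_2$.
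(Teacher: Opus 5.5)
Your argument is correct. The paper gives no proof of this statement; it is quoted from \cite{G1990}. The closest argument in the paper is its proof of the adapted analogue, Theorem \ref{thm:adapted.Gelbrich}, which has exactly your skeleton: expand the cost as $\|L\|_{\mathrm{F}}^2+\|M\|_{\mathrm{F}}^2$ minus twice a cross term, show the cross-covariance is $L\,(\text{contraction})\,M^{\intercal}$, and bound the trace by a nuclear norm.

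The difference is in how the contraction is obtained. You derive it matrix-analytically from positive semidefiniteness of the joint covariance, either via the Schur complement giving $C=A^{1/2}KB^{1/2}$ followed by polar decomposition, or via an auxiliary Gaussian pair padded in the kernel directions. The paper instead whitens the actual coupled variables, $\epsilon^X=L^{\ominus}X$ and $\epsilon^Y=M^{\ominus}Y$. It then uses only $\Cov(\epsilon^X),\Cov(\epsilon^Y)\preceq I$ (Lemma \ref{lem:proj_cov}; in the degenerate case this covariance is a coordinate projection rather than $I$). Cauchy--Schwarz then shows that $\bE_\pi[\epsilon_t^X(\epsilon_t^Y)^{\intercal}]$ has singular values at most $1$.

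The paper's route handles degeneracy without padding. Because it works on the coupled variables themselves, bicausality together with the martingale difference condition can then eliminate the off-diagonal blocks $\Sigma_{s,t}$ in the adapted setting. Your formulation instead isolates the classical result as a pure fact about admissible cross-covariances, which makes transparent that only second moments matter.

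Two simplifications are available to you:
\begin{itemize}
\item The polar decomposition step is unnecessary. Proposition \ref{prop:Wasserstein.Cholesky} allows arbitrary square factors, so you can take $L=A^{1/2}$ and $M=B^{1/2}$ directly.
\item The Gaussian realisation can be replaced by $\xi=L^{+}\tilde X$ and $\eta=M^{+}\tilde Y$ (Moore--Penrose inverses), applied to the coupled pair itself. This works because $LL^{+}\tilde X=\tilde X$ almost surely and $\Cov(\xi)=L^{+}L\preceq I$, which already suffices for the Cauchy--Schwarz bound on $\bE_\pi[\xi\eta^{\intercal}]$.
\end{itemize}
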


The lower bound \eqref{eqn:Gelbrich.bound}, which is analytically tractable, has found many applications including {\it distributionally robust optimization} \cite{kuhn2019wasserstein}.

It is natural to ask if the adapted analogue of \eqref{eqn:Gelbrich.bound} holds. That is, given $\mu, \nu \in \cP_2(\bR^{Nd})$ with mean-covariance pairs $(a, A)$, $(b, B)$, whether
\begin{equation} \label{eqn:adapted.Gelbrich.bound}
\begin{split}
\AW_2^2(\mu, \nu) &\geq \AW_2^2( \cN_{Nd}(a, A), \cN_{Nd}(b, B)) \\
&= \|a - b\|_2^2 + \dist_{\AW}^2( \cC_{\min}(A), \cC_{\min}(B)),
\end{split}
\end{equation}
where the last equality follows from Theorem \ref{thm:AW2.Gaussian.distributions}.

First, we show with an example that the adapted Gelbrich bound does not hold in general.

\begin{example} \label{eg:Gelbrich.counterexample}
Let $N = 2$ and $d = 1$. Let $\epsilon_1, \epsilon_2$ be independent $\cN_1(0, 1)$ random variables, and let $\delta > 0$ be arbitrary. Define 
\begin{equation} \label{eqn:eg.XY}
\begin{split}
X &= \begin{bmatrix} X_1 \\ X_2 \end{bmatrix} = \begin{bmatrix} \epsilon_1 \\ (\epsilon_1^2 - 1)/\sqrt{2} \end{bmatrix}, \\ 
Y &= \begin{bmatrix} Y_1 \\ Y_2 \end{bmatrix} = \begin{bmatrix}
(\epsilon_1^2 - 1)/\sqrt{2} \\
(2 + \delta) (\epsilon_1^2 - 1)/\sqrt{2} + \epsilon_2
\end{bmatrix}.
\end{split}
\end{equation}
Let $\mu = \cL(X)$ and $\nu = \cL(Y)$. It is easy to verify that the $\mu$ and $\nu$ have zero means, and have covariance matrices $A$ and $B$ in terms of the minimal Cholesky decomposition by
\[
A = \begin{bmatrix} 1 & 0 \\ 0 & 1 \end{bmatrix}, \quad B = \begin{bmatrix}1 & 0 \\ 2 + \delta & 1 \end{bmatrix}\begin{bmatrix}1 & 2 + \delta \\ 0 & 1 \end{bmatrix} = \begin{bmatrix} 1 & 2 + \delta \\ 2 + \delta & (2 + \delta)^2 + 1 \end{bmatrix}.
\]
By Theorem \ref{thm:AW2.Gaussian.distributions}, we have
\[
\AW_2^2(\cN_2(0, A), \cN_2(0, B)) = 2 + (2 + (2 + \delta)^2) - 2 (1 + 1) = (2 + \delta)^2.
\]

On the other hand, the coupling $\pi = \cL(X, Y)$ defined by \eqref{eqn:eg.XY} is an element of $\Cpl_{\mathrm{bc}}(\mu, \nu)$, under which
\[
\bE_{\pi}[\|X - Y\|_2^2] = 2 + (2 + (2 + \delta)^2) - 2(2 + \delta) = (2 + \delta)^2 - 2 \delta.
\]
It follows that $\AW_2(\mu, \nu) < \AW_2(\cN_2(0, A), \cN_2(0, B))$ and the adapted Gelbrich bound fails.
\end{example}

We identify a sufficient condition, which is rather restrictive, under which the adapted Gelbrich bound holds. 

\begin{assumption}[Martingale difference condition] \label{ass:martingale.difference.condition}
Let $\mu$ have mean $a \in \bR^{Nd}$ and covariance $A \in \mathscr{S}_{+}(Nd)$. Let $L = \cC_{\min}(A)$ and let $X \sim \mu$. Define
\begin{equation} \label{eqn:epsilon}
\epsilon := L^{\ominus} (X - a).
\end{equation}
We say that $\mu$ satisfies the martingale difference condition if $\epsilon$ is a martingale difference with respect to the filtration induced by $X$. That is,
\begin{equation} \label{eqn:martingale.difference.condition}
\bE[ \epsilon_t \mid X_{1:(t-1)}] = 0, \quad t \in [N].
\end{equation}
\end{assumption}

\begin{lemma} \label{lem:proj_cov}
Consider $\epsilon$ defined by \eqref{eqn:epsilon}. Then $\bE[\epsilon] = 0$ and
\[
\Cov(\epsilon) = \bE[\epsilon \epsilon^{\intercal}] \preceq I
\]
in Loewner order. 
\end{lemma}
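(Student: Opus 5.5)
The mean statement is immediate from linearity: $\bE[\epsilon] = L^{\ominus}(\bE[X] - a) = 0$. For the covariance, we have
\[
\Cov(\epsilon) = L^{\ominus} A (L^{\ominus})^{\intercal} = L^{\ominus} L L^{\intercal} (L^{\ominus})^{\intercal},
\]
using $A = LL^{\intercal}$ with $L = \cC_{\min}(A)$. So the claim reduces to a purely algebraic statement about the product $L^{\ominus}L$. I will show that this product is the orthogonal projection $EE^{\intercal}$ onto the coordinates indexed by $\cI = \cI(L)$. Then $\Cov(\epsilon) = EE^{\intercal}EE^{\intercal} = EE^{\intercal} \preceq I$, since $EE^{\intercal}$ is a diagonal $0/1$ matrix (indeed it is the identity when $A$ is nondegenerate).

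To compute $L^{\ominus}L$, I use the zero-column condition of Proposition \ref{prop:Chronological.Cholesky}(ii). Every column of $L$ indexed by $i \notin \cI$ vanishes, so $L = L E E^{\intercal}$.

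Next I need the rows of $L$ restricted to $\cI$. By Definition \ref{def:chroninv}, $L^{\ominus} L = E(L_{[\cI,\cI]})^{-1}E^{\intercal} L E E^{\intercal}$, and $E^{\intercal} L E = L_{[\cI,\cI]}$ by \eqref{eqn:LII}. Hence
\[
L^{\ominus}L = E (L_{[\cI,\cI]})^{-1} L_{[\cI,\cI]} E^{\intercal} = EE^{\intercal}.
\]
Only $E^{\intercal}L$ is needed here, not the rows of $L$ outside $\cI$, so the computation is clean.

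The only point that needs care is the identity $L = LEE^{\intercal}$, which is exactly where minimality of the Cholesky factor enters. For a non-minimal factor, such as $L(\theta)$ in Example \ref{eg:simple.example}, this identity fails and the bound could break. Since minimality is built into Assumption \ref{ass:martingale.difference.condition} via $L = \cC_{\min}(A)$, the identity holds and the argument is complete.

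Probabilistically, $\epsilon$ has identity covariance on the active coordinates and is identically zero on the inactive ones, which is consistent with $\tilde{\epsilon}$ in \eqref{eqn:epsilon.tilde}.
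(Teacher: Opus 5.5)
Your proof is correct and is essentially the paper's argument. Both rest on the zero-column condition $L = LEE^{\intercal}$. The paper uses it in the form $A_{[\cI,\cI]} = L_{[\cI,\cI]}L_{[\cI,\cI]}^{\intercal}$, while you package it as $L^{\ominus}L = EE^{\intercal}$, and either way one gets $\Cov(\epsilon) = EE^{\intercal} \preceq I$.
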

\begin{proof}
Define $\cI = \cI(L)$ as in \eqref{eqn:cI}. Since $L^{\ominus}=E_{\cI} (L_{[\cI,\cI]})^{-1}E_{\cI}^\intercal$, we have
\[
\epsilon = E_{\cI} (L_{[\cI,\cI]})^{-1}(X_\cI-a_{\cI}),
\]
which clearly has zero mean. It follows that 
\begin{equation*}
\begin{split}
\Cov(\epsilon) &= E_{\cI} (L_{[\cI,\cI]})^{-1}\Cov(X_{\cI})((L_{[\cI,\cI]
})^{-1})^{\intercal}E_{\cI}^\intercal \\
&= E_{\cI} (L_{[\cI,\cI]})^{-1}A_{[\cI,\cI]}((L_{[\cI,\cI]})^{-1})^{\intercal}E_{\cI}^\intercal.
\end{split}
\end{equation*}
Since $A=LL^\intercal$ and $L_{[\dot, i]} = 0$ for all $i \notin \cI$, we have $A_{[\cI,\cI]}=L_{[\cI,\cI]}L_{[\cI,\cI]}^\intercal$. Hence $\Cov(\epsilon)=E_{\cI} E_{\cI}^\intercal \preceq I$. We remark that $E_{\cI} E_{\cI}^{\intercal}$ is the orthogonal projection onto the active coordinate subspace $\mathrm{span}\{e_i:i\in\mathcal{I}\}$.

\end{proof}

\begin{theorem}[Adapted Gelbrich bound under the martingale difference condition] \label{thm:adapted.Gelbrich}
Suppose that $\mu, \nu \in \cP_2(\bR^{Nd})$ satisfy Assumption \ref{ass:martingale.difference.condition}. Then the adapted Gelbrich bound \eqref{eqn:adapted.Gelbrich.bound} holds.
\end{theorem}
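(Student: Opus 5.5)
The plan is to lower-bound the transport cost of an \emph{arbitrary} bicausal coupling $\pi \in \Cpl_{\mathrm{bc}}(\mu,\nu)$ by the right-hand side of \eqref{eqn:adapted.Gelbrich.bound}. By the centering argument of Remark \ref{rmk:centering} (translation is compatible with bicausality), I assume $a=b=0$. Let $L=\cC_{\min}(A)$, $M=\cC_{\min}(B)$, and set $\epsilon := L^{\ominus}X$ and $\eta := M^{\ominus}Y$ as in \eqref{eqn:epsilon}.

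First I will show that $X = L\epsilon$ almost surely, and likewise $Y = M\eta$. By the zero-column property of the minimal Cholesky factor, $L = LEE^{\intercal}$ with $E=E(\cI)$. Hence
\[
LL^{\ominus}L = LE(L_{[\cI,\cI]})^{-1}E^{\intercal}LEE^{\intercal} = LEE^{\intercal} = L,
\]
so $LL^{\ominus}$ acts as the identity on the range of $L$. Since $\Cov(X)=LL^{\intercal}$, $X$ lies in the range of $L$ almost surely, and therefore $X=LL^{\ominus}X=L\epsilon$. Consequently
\[
\bE_\pi\|X-Y\|_2^2 = \|L\|_{\mathrm F}^2+\|M\|_{\mathrm F}^2 - 2\,\bE_\pi[\epsilon^{\intercal}L^{\intercal}M\eta] = \|L\|_{\mathrm F}^2+\|M\|_{\mathrm F}^2 - 2\sum_{s,t}\tr\big((L^{\intercal}M)_{s,t}\,\bE_\pi[\eta_t\epsilon_s^{\intercal}]\big),
\]
where I have used $\tr A=\|L\|_{\mathrm F}^2$.

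The key step, and the one I expect to be the main obstacle, is showing that the off-diagonal cross terms vanish. Take $s<t$. The vector $\epsilon_s$ is a function of $X_{1:s}$, because $L^{\ominus}$ is lower triangular, and $\eta_t$ is a function of $Y_{1:t}$. Bicausality \eqref{eqn:bicausal} gives that, under $\pi$, $Y$ is conditionally independent of $X_{1:(t-1)}$ given $Y_{1:(t-1)}$. Therefore
\[
\bE_\pi[\eta_t\mid X_{1:(t-1)},Y_{1:(t-1)}] = \bE[\eta_t\mid Y_{1:(t-1)}] = 0,
\]
where the last equality is the martingale difference condition \eqref{eqn:martingale.difference.condition} for $\nu$. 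Conditioning on $(X_{1:(t-1)},Y_{1:(t-1)})$ then yields $\bE_\pi[\eta_t\epsilon_s^{\intercal}]=0$. The case $s>t$ is symmetric: it uses the other half of \eqref{eqn:bicausal} together with the martingale difference condition for $\mu$. This is exactly where both the assumption and bicausality are used, and Example \ref{eg:Gelbrich.counterexample} shows the corresponding terms need not vanish otherwise.

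It remains to control the diagonal terms. Set $P_t:=\bE_\pi[\eta_t\epsilon_t^{\intercal}]$. By Lemma \ref{lem:proj_cov}, $\Cov(\epsilon_t)=:D_t\preceq I$ and $\Cov(\eta_t)=:D_t'\preceq I$. The joint covariance of $(\epsilon_t,\eta_t)$ is positive semidefinite, so
\[
\begin{bmatrix} I & P_t^{\intercal}\\ P_t & I\end{bmatrix} \succeq \begin{bmatrix} D_t & P_t^{\intercal}\\ P_t & D'_t\end{bmatrix} \succeq 0,
\]
and by \eqref{eqn:correlation.matrix} this gives $\|P_t\|_{2\to2}\le1$. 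Proposition \ref{prop:trace} then yields $\tr((L^{\intercal}M)_{t,t}P_t)\le\|(L^{\intercal}M)_{t,t}\|_*$. Hence
\[
\bE_\pi\|X-Y\|_2^2\ge\dist_{\AW}^2(L,M).
\]
Taking the infimum over $\pi$ and invoking Theorem \ref{thm:AW2.Gaussian.distributions} gives \eqref{eqn:adapted.Gelbrich.bound}.
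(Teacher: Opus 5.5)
Your proposal is correct and follows essentially the same route as the paper's proof. You expand the cost as $\|L\|_{\mathrm F}^2+\|M\|_{\mathrm F}^2-2\sum_{s,t}\tr(\cdots)$, kill the off-diagonal cross-covariances by bicausality plus the martingale difference condition, and bound each diagonal term by $\|(L^{\intercal}M)_{t,t}\|_*$ via $\Cov(\epsilon_t)\preceq I$ from Lemma \ref{lem:proj_cov}; your explicit check that $X=L\epsilon$ almost surely (via $LL^{\ominus}L=L$ and $X\in\mathrm{range}(L)$) supplies a step the paper uses silently, and your block positive-semidefiniteness argument for $\|P_t\|_{2\to2}\le 1$ is an equivalent replacement for the paper's Cauchy--Schwarz bound on $u^{\intercal}\Sigma_{t,t}v$.
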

\begin{proof}
We assume without loss of generality that $\mu$ and $\nu$ have mean zero. Let $L, M \in \mathscr{L}_{+}(Nd)$ be respectively the minimal Cholesky factors of the covariance matrices $A$ and $B$ of $\mu$ and $\nu$. Let $\pi \in \Cpl_{\mathrm{bc}}(\mu, \nu)$ and $(X, Y) \sim \pi$. Define $\epsilon^X = L^{\ominus} X$ and $\epsilon^Y = M^{\ominus} Y$. By Lemma \ref{lem:proj_cov}, for each $t \in [N]$ we have

\begin{equation} \label{eqn:lem.consequence}
 \Cov(\epsilon_t^X) \preceq I \quad \text{and} \quad  \Cov(\epsilon_t^Y) \preceq I.  
\end{equation}

Consider the transport cost
\begin{equation} \label{eqn:adapted.Gelbrich.proof}
\begin{split}
\bE_{\pi}[ \|X - Y\|_2^2] &= \bE_{\pi}[ \| L\epsilon^X - M\epsilon^Y \|^2] \\
&= \|L\|_{\mathrm{F}}^2 + \|M\|_{\mathrm{F}}^2 - 2 \bE_{\pi} [ (\epsilon^X)^{\intercal} L^{\intercal} M \epsilon^Y].
\end{split}
\end{equation}
Write
\begin{equation*}
\begin{split}
\bE_{\pi} [ (\epsilon^X)^{\intercal} L^{\intercal} M \epsilon^Y] &= \sum_{s, t = 1}^N \bE[ (\epsilon_s^X)^{\intercal} (L^{\intercal} M)_{s,t} \epsilon_t^{Y} ] \\
  &= \sum_{s, t = 1}^N \tr ( (L^{\intercal} M)_{s,t} \Sigma_{s,t}^{\intercal} ),
\end{split}
\end{equation*}
where $\Sigma_{s,t} = \bE_{\pi}[ \epsilon_s^X (\epsilon_t^Y)^\intercal]$ is the covariance between $\epsilon_s^X$ and $\epsilon_t^Y$.

We claim that $\Sigma_{s, t} = 0$ for $s \neq t$. Suppose $s > t$ (the other case is similar). Since $\pi$ is bicausal, the conditional distribution of $X_s$ given $X_{1:(s-1)}$ and $Y_{1:t}$ is the same as that of $X_s$ given $X_{1:(s-1)}$. Using bicausality and the tower property, we have
\begin{equation*}
\begin{split}
\Sigma_{s,t} &= \bE_{\pi}[ \epsilon_s^X (\epsilon_t^Y)^{\intercal}] \\
&= \bE_{\pi} [\bE_{\pi} [ (L^\ominus X)_s (M^\ominus Y)_t^{\intercal} \mid X_{1:(s-1)}, Y_{1:t}]] \\
&= \bE_{\pi} [ \bE_{\pi} [ \epsilon_s^X \mid X_{1:(s-1)}] (\epsilon_t^Y)^{\intercal} ].
\end{split}
\end{equation*}
By the martingale difference property, the inner conditional expectation is zero. Hence, $\Sigma_{s,t} = 0$. Let $t \in [N]$. From \eqref{eqn:lem.consequence}, for any $u, v \in \bR^{d}$ with $\|u\|_2 = \|v\|_2 = 1$, we have, by the Cauchy--Schwarz inequality,
\begin{equation*}
\begin{split}
u^{\intercal} \Sigma_{t,t} v &= \bE_{\pi} [ (u^{\intercal}\epsilon_t^X) (v^{\intercal} \epsilon_t^Y)] \\
&\leq \bE_{\pi} [(u^{\intercal} \epsilon_t^X)^2]^{\frac{1}{2}} \bE_{\pi} [(v^{\intercal} \epsilon_t^Y)^2]^{\frac{1}{2}} \leq 1.
\end{split}
\end{equation*}
It follows that all singular values $\sigma_i(\Sigma_{t,t})$ of $\Sigma_{t,t}$ are bounded above by $1$.

By the trace inequality \eqref{eqn:trace.inequality}, we have
\begin{equation*}
\begin{split}
\tr ( (L^{\intercal} M)_{t,t} \Sigma_{t,t}^{\intercal} ) &\leq \sum_{i = 1}^d \sigma_i( (L^{\intercal}M)_{t,t}) \sigma_i(\Sigma_{t,t}) \\
&\leq \sum_{i = 1}^d \sigma_i((L^{\intercal} M)_{t,t}) = \| (L^{\intercal}M)_{t,t} \|_*.
\end{split}
\end{equation*}
Plugging this into \eqref{eqn:adapted.Gelbrich.proof}, we have
\begin{equation*}
\begin{split}
\bE_{\pi}[\|X - Y\|_2^2] 
  &\geq \|L\|_{\mathrm{F}}^2 + \|M\|_{\mathrm{F}}^2 - 2 \sum_{t = 1}^N \| (L^{\intercal} M)_{t,t} \|_* \\
  &= \AW_2^2( \cN_{Nd}(0, A), \cN_{Nd}(0, B)).
\end{split}
\end{equation*}
Taking the infimum over all $\pi\in\Cpl_{\mathrm{bc}}(\mu,\nu)$ yields the desired bound.
\end{proof}

\section*{Acknowledgment}
This research is partially supported by NSERC Discovery Grant RGPIN-2025-06021. We thank Beatrice Acciaio, Songyan Hou and Yifan Jiang for helpful discussions. Some results of this paper were presented in the workshop ``Probabilistic Mass Transport - from Schr\"{o}dinger to Stochastic Analysis'' held at the Erwin Schr\"{o}dinger International Institute for Mathematics and Physics. We thank the organizers and the participants for their comments. 

\appendix
\section{Additional proofs} \label{sec:proofs}

\begin{proof}[Proof of Proposition \ref{prop:trace}]
We will prove (i) and (ii) together. Let $\|P\|_{2 \rightarrow 2} \leq 1$. Then $\sigma_i(P) \leq 1$ for all $i$. By the trace inequality \eqref{eqn:trace.inequality}, we have
\[
\tr(CP) \leq \sum_{i = 1}^n \sigma_i(C) \sigma_i(P) \leq \sum_{i = 1}^n \sigma_i(C) = \|C\|_*.
\]

Using the given singular value decomposition $C = U \Sigma V^{\intercal}$, consider $\tilde{P} := V^{\intercal} P U$. We have
\[
\tr(CP) = \tr(U \Sigma V^{\intercal} V \tilde{P} U^{\intercal}) = \tr(\Sigma \tilde{P}) = \sum_{i = 1}^r \sigma_i(C) \tilde{P}_{[i, i]}.
\]
Since $U$ and $V$ are orthogonal, we have $\|\tilde{P}\|_{2 \rightarrow 2} \leq \|P\|_{2 \rightarrow 2} \leq 1$.  In particular, $|\tilde{P}_{[i, i]}| = |e_i^{\intercal} \tilde{P} e_i| \leq 1$. It follows that the maximum trace is $\|C\|_*$ (choose for example $P = VU^{\intercal}$, so that $\tilde{P} = I_n$), and $P \in \mathscr{P}(C)$ if and only if
\begin{equation}\label{eqn:diag-equality}
\tilde{P}_{[i,i]} = 1, \quad i = 1,\dots,r.
\end{equation}

It remains to show that \eqref{eqn:diag-equality} is equivalent to the condition in \eqref{eqn:trace.optimizer}. Write, in block form,
\[
\tilde{P} = \begin{bmatrix} \tilde{P}_{1,1} & \tilde{P}_{1,2} \\ \tilde{P}_{2,1} & \tilde{P}_{2,2} \end{bmatrix},
\]
where $\tilde{P}_{1,1} \in \bR^{r \times r}$. Since $\|\tilde{P}\|_{2 \rightarrow 2} \leq 1$, we also have $\|\tilde{P}_{1,1}\|_{2 \rightarrow 2} \leq 1$ and $\|\tilde{P}_{2,2}\|_{2 \rightarrow 2} \leq 1$.

Assume \eqref{eqn:diag-equality} holds. Since $\tr(\tilde{P}_{1,1}) = r$, the trace inequality \eqref{eqn:trace.inequality} (for the product $\tilde{P}_{1,1} \cdot I_r$) implies that all singular values of $\tilde{P}_{1,1}$ are $1$. It follows that $\tilde{P}_{1,1}$ is orthogonal. So, each column of $\tilde{P}_{1,1}$ is a unit vector. From \eqref{eqn:diag-equality}, we have that $\tilde{P}_{1,1} = I_r$. 

We claim further that the off-diagonal blocks $\tilde{P}_{1,2}$ and $\tilde{P}_{2,1}$ vanish. To see this, note that for any $x \in \bR^{r}$, we have, since $\|\tilde{P}\|_{2 \rightarrow 2} \leq 1$,
\[
\left\| \tilde{P} \begin{bmatrix} x \\ 0 \end{bmatrix} \right\|_2^2
= \left\| \begin{bmatrix} I_r x \\ \tilde{P}_{2,1} x \end{bmatrix} \right\|_2^2
= \|x\|_2^2 + \|\tilde{P}_{2,1}x\|_2^2
\le \left\|\begin{bmatrix}x\\0\end{bmatrix}\right\|_2^2
= \|x\|_2^2.
\]
It follows that $\tilde{P}_{2,1} = 0$. Similarly, we have $\tilde{P}_{1,2} = 0$. Thus $\tilde{P} = \diag(I_r, K)$ with $\|K\|_{2 \rightarrow 2} \leq 1$, and we have
\[
P = V \tilde{P} U^{\intercal} = \begin{bmatrix} V_1 & V_0\end{bmatrix} \begin{bmatrix} I_r & 0 \\ 0 & K \end{bmatrix} \begin{bmatrix} U_1 & U_0 \end{bmatrix}^{\intercal} = V_1 U_1^{\intercal} + V_0 K U_0^{\intercal},
\]
which is the condition in \eqref{eqn:trace.optimizer}. Conversely, it is immediate to check that any matrix $P$ of the above form attains the maximum value.
\end{proof}

\begin{proof}[Proof of Lemma \ref{lem:opnorm.bound}]
For notational simplicity, we suppress $N$ and write $L = L^{(N)}$ and $M = M^{(N)}$. In the following, we let $C > 0$ be a constant, independent from $N$, that may change from line to line.

Consider the bound for $\|L\|_{2 \rightarrow 2}$; the case for $\|M\|_{2 \rightarrow 2}$ is the same. The main idea is to compare the block lower triangular matrix $L$ with a full random matrix. For each $N$, we may express $L$ in the form
\[
L_{[i,j]} = a_{[i,j]} G_{[i,j]}, \quad i, j \in [Nd],
\]
where
\begin{equation} \label{eqn:aij}
a_{[i,j]} = \left\{\begin{array}{ll}
        1, & \text{if the entry $L_{[i,j]}$ is in a block $L_{s,t}$ with $s \geq t$}\\
        0, & \text{otherwise}
        \end{array}\right.
\end{equation}
and $G_{[i,j]}$ are i.i.d.~$\cN(0, 1)$ random variables. Here, we assume without loss of generality that additional i.i.d.~Gaussian random variables (corresponding to the upper triangular part) are defined on the given probability space.

By a theorem of Lata{\l}a \cite[Theorem 1]{latala2005some},\footnote{This result assumes that $G_{ij}$ are i.i.d.~$\cN(0, 1)$. Nevertheless, we note that a similar estimate \cite[Theorem 2]{latala2005some} holds for as long as the entries are independent and have finite fourth moment.} we have
\begin{equation} \label{eqn:Latala}
\bE [\|L\|_{2\to2}]
\leq
C\left(
\max_i  \Big( \sum_j a_{[i,j]}^2 \Big)^{1/2}
+
\max_j  \Big( \sum_i a_{[i,j]}^2 \Big)^{1/2}
+
\Big(\sum_{i,j}a_{[i,j]}^4\Big)^{1/4}
\right).
\end{equation}
From \eqref{eqn:aij}, we see that the $\max_i$ and $\max_j$ terms are both $Nd$, and that 
\begin{equation} \label{eqn:EL.upper.bound}
\sum_{i, j \in [Nd]} a_{ij}^4 = d^2 \binom{N+1}{2}.
\end{equation}
It follows from \eqref{eqn:Latala} that 
\[
\bE[\|L\|_{2 \rightarrow 2}] \leq C \sqrt{Nd}.
\]

Next, we show concentration of $\|L\|_{2 \rightarrow 2}$ about its mean. Observe that
\[
\|L\|_{2 \rightarrow 2} = \max_{u, v \in \bR^{Nd}: \|u\|_2 = \|v\|_2 = 1} \sum_{i, j \in [Nd]} a_{[i,j]} u_{[i]} v_{[j]} G_{[i,j]}
\]
is the maximum of the centered Gaussian process $(u^{\intercal} L v)_{u, v}$ indexed by unit vectors $u$ and $v$. Since
\begin{align}
\mathrm{Var} \Big( \sum_{i, j} a_{[i,j]} u_{[i]} v_{[j]} G_{[i,j]} \Big) &= \sum_{i, j} a_{[i,j]}^2 u_{[i]}^2 v_{[j]}^2 \leq \sum_i u_{[i]}^2 \sum_j v_{[j]}^2 = 1
\end{align}
for all unit vectors $u, v$, 
the celebrated Borell--TIS inequality (see e.g.~\cite[Theorem 2.1.1]{AT07}) implies that
\[
\bP\left( \|L\|_{2 \rightarrow 2} \geq \bE[\|L\|_{2 \rightarrow 2}] + z \right) \leq e^{-z^2/2}, \quad z > 0.
\]
Choosing $z = 2 \sqrt{\log N}$, we obtain
\[
\bP\left( \|L\|_{2 \rightarrow 2} \geq \bE[\|L\|_{2 \rightarrow 2}] + 2 \sqrt{\log N} \right) \leq \frac{1}{N^2}, \quad N \geq 1.
\]
Since $\sum_{N \geq 1} N^{-2} < \infty$, by Borel--Cantelli we have, almost surely,
\[
\|L\|_{2 \rightarrow 2} \leq \bE[\|L\|_{2 \rightarrow 2}] + 2 \sqrt{\log N}, \quad N \text{ sufficiently large}.
\]
Combining this with \eqref{eqn:EL.upper.bound} (and using $\sqrt{\log N} \leq \sqrt{N}$), we have almost surely that
\[
\|L\|_{2 \rightarrow 2} \leq C \sqrt{N}, \quad N \text{ sufficiently large}.
\]

\end{proof}

\begin{proof}[Proof of Lemma \ref{lem:LTM.F.limit}]
We write, as before, $L=L^{(N)}$ and $M=M^{(N)}$. We first compute the conditional expectation given $L$. Since
\[
\|L^{\intercal}M\|_{\mathrm{F}}^2
=
\tr(L^{\intercal}MM^{\intercal}L),
\]
we have, by independence of $L$ and $M$,
\[
\bE\big[\|L^{\intercal}M\|_{\mathrm{F}}^2\mid L\big]
=
\tr\big(L^{\intercal}\bE[MM^{\intercal}]L\big).
\]
Now $\bE[MM^{\intercal}]$ is block diagonal. Indeed, the $s$-th row-block of $M$ contains exactly $s$ non-zero $d\times d$ Gaussian blocks, and for each such block $G$ we have $\bE[GG^{\intercal}]=dI_d$. Hence the $s$-th diagonal block of $\bE[MM^{\intercal}]$ is equal to $sd\,I_d$. It follows that
\[
\bE\big[\|L^{\intercal}M\|_{\mathrm{F}}^2\mid L\big]
=
d\sum_{s=1}^N s\sum_{t=1}^s \|L_{s,t}\|_{\mathrm{F}}^2.
\]

For each $s\in[N]$, the random variable $\sum_{t=1}^s \|L_{s,t}\|_{\mathrm{F}}^2$ is chi-square with $sd^2$ degrees of freedom. In particular,
\[
\bE\left[\sum_{t=1}^s \|L_{s,t}\|_{\mathrm{F}}^2\right]=sd^2,
\qquad
\Var\left(\sum_{t=1}^s \|L_{s,t}\|_{\mathrm{F}}^2\right)=2sd^2.
\]
Moreover, these random variables are independent as $s$ varies. Therefore
\[
\sum_{s=1}^{\infty}
\frac{
\Var\left(\sum_{t=1}^s \|L_{s,t}\|_{\mathrm{F}}^2-sd^2\right)
}{s^4}
=
2d^2\sum_{s=1}^{\infty}\frac{1}{s^3}
<\infty.
\]
By Kolmogorov's convergence criterion, the series
\[
\sum_{s=1}^{\infty}
\frac{
\sum_{t=1}^s \|L_{s,t}\|_{\mathrm{F}}^2-sd^2
}{s^2}
\]
converges almost surely. An application of Kronecker's lemma yields
\[
\frac{1}{N^3}\sum_{s=1}^N
s\left(\sum_{t=1}^s \|L_{s,t}\|_{\mathrm{F}}^2-sd^2\right)
\to 0
\qquad\text{almost surely.}
\]
Consequently,
\[
\frac{1}{N^3}\bE\big[\|L^{\intercal}M\|_{\mathrm{F}}^2\mid L\big]
=
\frac{d^3}{N^3}\sum_{s=1}^N s^2
+
\frac{d}{N^3}\sum_{s=1}^N
s\left(\sum_{t=1}^s \|L_{s,t}\|_{\mathrm{F}}^2-sd^2\right),
\]
and therefore
\[
\frac{1}{N^3}\bE\big[\|L^{\intercal}M\|_{\mathrm{F}}^2\mid L\big]
\to
\frac{d^3}{3}
\qquad\text{almost surely.}
\]

It remains to show that the fluctuation around the conditional expectation is negligible on the scale $N^3$. Conditionally on $L$, the matrix $M$ is Gaussian and the map $H\mapsto L^{\intercal}H$ is linear. Hence there exist a standard Gaussian vector $g$ and a positive semidefinite matrix $A$ such that
\[
\|L^{\intercal}M\|_{\mathrm{F}}^2=g^{\intercal}Ag.
\]
If $\lambda_i$ are the eigenvalues of $A$, then
\[
\Var\big(\|L^{\intercal}M\|_{\mathrm{F}}^2\mid L\big)
=
2\sum_i \lambda_i^2
\le
2\Big(\max_i\lambda_i\Big)\sum_i\lambda_i.
\]
Now
\[
\sum_i\lambda_i=\tr(A)=\bE\big[\|L^{\intercal}M\|_{\mathrm{F}}^2\mid L\big],
\]
while
\[
\max_i\lambda_i=\|A\|_{2\to 2}\le \|L\|_{2\to 2}^2,
\]
because the linear map $H\mapsto L^{\intercal}H$ has operator norm at most $\|L\|_{2\to 2}$ with respect to the Frobenius norm. Thus
\[
\Var\big(\|L^{\intercal}M\|_{\mathrm{F}}^2\mid L\big)
\le
2\|L\|_{2\to 2}^2\,\bE\big[\|L^{\intercal}M\|_{\mathrm{F}}^2\mid L\big].
\]

By Lemma \ref{lem:opnorm.bound}, there exists a constant $K>0$ and an event of probability one such that
\[
\|L\|_{2\to 2}^2\le K^2N
\]
for all sufficiently large $N$. By the first part of the proof, there exists an event of probability one such that
\[
\frac{1}{N^3}\bE\big[\|L^{\intercal}M\|_{\mathrm{F}}^2\mid L\big]
\le \frac{d^3}{3}+1
\]
for all sufficiently large $N$. Intersecting these two events, we obtain an event of probability one on which
\[
\Var\big(\|L^{\intercal}M\|_{\mathrm{F}}^2\mid L\big)
\le
2K^2\left(\frac{d^3}{3}+1\right)N^4
\]
for all sufficiently large $N$.

 Chebyshev's inequality now gives (after conditioning), for every $\varepsilon>0$,
\[
\bP\left(
\left.
\left|
\|L^{\intercal}M\|_{\mathrm{F}}^2
-
\bE\big[\|L^{\intercal}M\|_{\mathrm{F}}^2\mid L\big]
\right|
>
\varepsilon N^3
\,\right|\,L
\right)
\le
\frac{2K^2\left(\frac{d^3}{3}+1\right)}{\varepsilon^2 N^2}
\]
for all sufficiently large $N$, almost surely. Since $\sum_{N\ge1}N^{-2}<\infty$, given $L$, the Borel--Cantelli lemma yields
\[
\frac{
\|L^{\intercal}M\|_{\mathrm{F}}^2
-
\bE\big[\|L^{\intercal}M\|_{\mathrm{F}}^2\mid L\big]
}{N^3}
\to 0
\qquad\text{almost surely.}
\]

Combining this with the limit for the conditional expectation, we conclude that
\[
\frac{1}{N^3}\|L^{\intercal}M\|_{\mathrm{F}}^2
\to
\frac{d^3}{3}
\qquad\text{almost surely.}
\]
\end{proof}

\bibliographystyle{plain}
\bibliography{transport.bib}
\end{document}